\newif\iflclip
\newif\ifbclip
\newif\ifrclip
\newif\iftclip
\def\CLIP{\dimexpr\fboxrule+.2pt\relax}
\def\nulclip{0pt}
\newcommand\partbox[2]{%
  \lclipfalse\bclipfalse\rclipfalse\tclipfalse%
  \let\lkern\relax\let\rkern\relax%
  \let\lclip\nulclip\let\bclip\nulclip\let\rclip\nulclip\let\tclip\nulclip%
  \parseclip#1\relax\relax%
  \iflclip\def\lkern{\kern\CLIP}\def\lclip{\CLIP}\fi
  \ifbclip\def\bclip{\CLIP}\fi
  \ifrclip\def\rkern{\kern\CLIP}\def\rclip{\CLIP}\fi
  \iftclip\def\tclip{\CLIP}\fi
  \lkern\clipbox{\lclip{} \bclip{} \rclip{} \tclip}{\fbox{#2}}\rkern%
}
\def\parseclip#1#2\relax{%
  \ifx l#1\lcliptrue\else
  \ifx b#1\bcliptrue\else
  \ifx r#1\rcliptrue\else
  \ifx t#1\tcliptrue\else
  \fi\fi\fi\fi
  \ifx\relax#2\relax\else\parseclip#2\relax\fi
}
\theoremstyle{definition}
\theoremstyle{remark}
\numberwithin{equation}{section}
\newcommand\reallywidehat[1]{%
\savestack{\tmpbox}{\stretchto{%
  \scaleto{%
    \scalerel*[\widthof{\ensuremath{#1}}]{\kern-.6pt\bigwedge\kern-.6pt}%
    {\rule[-\textheight/2]{1ex}{\textheight}}
  }{\textheight}%
}{0.5ex}}%
\stackon[1pt]{#1}{\tmpbox}%
}
\newcommand{\R}{\mathbb{R}}
\newcommand{\Z}{\mathbb{Z}}
\newcommand{\C}{\mathbb{C}}
\newcommand{\br}{\langle}
\newcommand{\kt}{\rangle}
\theoremstyle{definition}
\newtheorem{thm}{Theorem}[section]
\newtheorem{prop}[thm]{Proposition}
\newtheorem{lm}[thm]{Lemma}
\newtheorem{defn}[thm]{Definition}
\newtheorem{rem}[thm]{Remark}
\newtheorem{eg}[thm]{Example}
\newtheorem{cor}[thm]{Corollary}
\newtheorem*{hyp1*}{Hypothesis (I)}
\newtheorem*{hyp2*}{Hypothesis (II)}
\newtheorem*{hyp3*}{Hypothesis (III)}
\newtheoremstyle{exercise}{}{}{\itshape}{}{\bfseries}{:}{.5em}{\thmname{#1} \thmnumber{#2}\thmnote{(#3)}}
\theoremstyle{exercise}
\DeclareFontFamily{U}{wncy}{}
\DeclareFontShape{U}{wncy}{m}{n}{<->wncyr10}{}
\DeclareSymbolFont{mcy}{U}{wncy}{m}{n}
\DeclareMathSymbol{\sha}{\mathord}{mcy}{"58}
\renewcommand{\d}[2]
{\frac{d#1}{d#2}}
\newcommand{\nc}{\newcommand}
\nc {\Prod}{(\underset{I}\ph\, r_I^{n_I})}
\nc {\wlw}{\wedge\ldots\wedge}
\nc {\olo}{\otimes\ldots\otimes}
\nc{\bea}{\begin{eqnarray*}}
\nc{\eea}{\end{eqnarray*}}
\nc {\e}{\varepsilon}
\nc{\de}{\delta}
\nc{\A}{\hat a}
\nc{\Ad}{\hat a^\dag}
\nc{\grad}{\nabla}
\nc{\nlim}{\underset{n\to\infty}\lim}
\nc{\ilim}{\underset{i\to\infty}\lim}
\nc{\isum}{\sum\limits_{i=1}^N}
\nc{\xx}{\underset{x\to x_0}\lim}
\nc{\Bnrm}{\Big |\Big|}
\nc{\sym}{\text{Sym}}
\nc{\inte}{\overset{\circ}}
\nc{\del}{\partial}
\nc{\plp}{+\ldots+}
\nc{\lre}{\longrightarrow}
\nc{\be}{\begin{equation}}
\nc{\ee}{\end{equation}}
\nc{\CP}{\mathbb{CP}}
\nc{\End}{\text{End}}
\nc{\mf}{\mathfrak}
\nc{\Hom}{\text{Hom}}
\nc{\spec}{\text{Spec}}
\nc{\sub}{\subseteq}
\nc{\weakto}{\rightharpoonup}
\nc{\ph}{\varphi}
\nc{\leqc}{\lesssim}
\nc{\delbar}{\overline \del}
\nc{\Tr}{\text{Tr}}
\nc{\Lhe}{\mathcal L_{(\Phi^{h_\e}, A^{h_\e})}}
\begin{document}


\title{Concentrating Dirac Operators and Generalized Seiberg-Witten Equations}

\author{Gregory J. Parker}
\address{Department of Mathematics, Stanford University}
\email{gjparker@stanford.edu}

\begin{abstract}
This article studies a class of Dirac operators of the form $D_\e= D+\e^{-1}\mathcal A$, where $\mathcal A$ is a zeroth order perturbation vanishing on a subbundle. When $\mathcal A$ satisfies certain additional assumptions, solutions of the Dirac equation have a concentration property in the limit $\e\to 0$: components of the solution orthogonal to $\ker(\mathcal A)$ decay exponentially away from the locus $\mathcal Z$ where the rank of $\ker(\mathcal A)$ jumps up. These results are extended to a class of non-linear Dirac equations. 

This framework is then applied to study the compactness properties of moduli spaces of solutions to generalized Seiberg-Witten equations. In particular, it is shown that for sequences of solutions which converge weakly to a $\Z_2$-harmonic spinor, certain components of the solutions concentrate exponentially around the singular set of the $\Z_2$-harmonic spinor. Using these results, the weak convergence to $\Z_2$-harmonic spinors proved in existing convergence theorems (\cite{Taubes3dSL2C, Taubes4dSL2C,HWCompactness,TaubesU1SW,WalpuskiZhangCompactness}) is improved to $C^\infty_{loc}$.

\end{abstract}


\maketitle
\tableofcontents

\section{Introduction}

Let $(Y,g)$ be a Riemannian manifold of dimension $n\geq 3$, and $ D: \Gamma(E)\to \Gamma(E)$ a Dirac operator on sections of a Clifford module $E\to Y$, i.e. a first-order elliptic operator whose principal symbol satisfies $\sigma_D^2=-\text{Id}$. A 1-parameter family of Dirac operators displaying a concentration property or, more succinctly, a {\bf Concentrating Dirac Operator} (sometimes called a {\it localizing} Dirac operator) is a parameterized perturbation \be D_\e = D + \tfrac{1}{\e}\mathcal A\label{concentratingDiracprelim}\ee
\noindent of $D$ by positive scalings of a zeroth order operator $\mathcal A \in \text{End}(E)$ such that the support of solutions concentrates along a distinguished collection of submanifolds $\mathcal Z\subset Y$ as $\e\to 0$. Concentrating Dirac operators were introduced to the mathematical literature by Witten's celebrated work on Morse theory \cite{WittenMorseTheory}, although they were familiar to physicists for decades prior to this. Since then, concentrating Dirac operators have been employed to give geometric proofs of many results in index theory \cite{ManosConcentrationI,ManosConcentrationII,RussianConcentratingIndex,JinLocalizingIndex,LocalizingIndexI, LocalizingIndexII,ZhangChernWeilWitten}, and  geometric quantization  \cite{FurutaTorusI,FuturaTorusII, FurutaTorusIII,TianAnalytic}. 

Concentrating Dirac operators have also played a significant role in Seiberg-Witten gauge theory. It is an observation due to C. Taubes that the linearization of the Seiberg-Witten equations behaves as a concentrating Dirac operator in certain limits. This perspective is central to Taubes's celebrated work ``SW=Gr'' relating the Seiberg-Witten and Gromov invariants of symplectic 4-manifolds \cite{TaubesSW=Gr,SW=Gr1, SW=Gr2,SW=Gr3,SW=Gr4}, and to his resolution of the Weinstein Conjecture \cite{TaubesWeinstein}. In these situations, the role of the perturbation $\mathcal A$ is occupied by a large multiple of the symplectic or contact form, and the solutions concentrate along submanifolds of codimension 2 which Taubes proves are the pseudo-holomorphic curves enumerated by the Gromov invariant or the Reeb orbits whose existence was postulated by Weinstein in the two cases respectively. 

Concentrating Dirac operators are also relevant in more recent work of Taubes and others on the compactness of moduli spaces for generalized Seiberg-Witten theories. For a specified compact Lie group $G$, a system of {\bf generalized Seiberg-Witten equations} on a 3 or 4-dimensional manifold is a system of first-order non-linear PDEs for a pair $(\Psi, A)$ of a spinor $\Psi$ and a connection $A$ on a principal $G$-bundle which has the schematic form 
\begin{eqnarray}
\slashed D_A \Psi &=& 0\label{prelimSW1} \\
\star F_A  &=&- \tfrac{1}{2}\mu(\Psi,\Psi) \label{prelimSW2}
\end{eqnarray}
\noindent where $\slashed D_A$ (resp. $\slashed D_A^+$ on a 4-manifold) is the Dirac operator twisted by the connection $A$, $F_A$ (resp. $F_A^+$) is the curvature of $A$ (resp. the self-dual component thereof), and $\mu$ is a pointwise quadratic map \footnote{Note that my convention for the sign of $\mu$ differs from that used by many authors. That is to say, I denote by $-\mu$ what others denote by $\mu$; the equations (thus their relevant compactness properties) are the same.}.  Examples include the Vafa-Witten equations  \cite{TaubesVW,TT1,TT2}, the Kapustin-Witten equations \cite{TaubesKWNahmPole, MazzeoWittenNahmI, MazzeoWittenNahmII,WittenFivebranesKnots}, the $\text{SL}(2,\C)$ anti-Self-Dual Yang-Mills equations \cite{Taubes3dSL2C, Taubes4dSL2C}, the Seiberg-Witten equations with multiple spinors \cite{HWCompactness,TaubesU1SW}, and the ADHM Seiberg-Witten Equations \cite{WalpuskiZhangCompactness}. The reader is referred to  \cite{WittenKhovanovGaugeTheory, DonaldsonSegal, DWAssociatives,HaydysG2SW,WalpuskiNotes} for discussions of conjectures relating these equations to the geometry of manifolds and to other gauge theories. The main barrier to progress on all of these conjectures is the lack of a well-understood compactification for the moduli space of solutions: 
unlike for the standard Seiberg-Witten equations, these more general theories do not admit compact moduli space of solutions and there instead may be sequences of solutions for which the $L^2$-norm of the spinor diverges. For such sequences, a renormalized (sub)sequence must converge to a {\bf $\Z_2$-harmonic spinor} or more general {\bf Fueter section} (proved for each respective equation in the above references).

A promising approach to constructing well-understood compactifications for these moduli spaces is to attach boundary strata consisting of $\Z_2$-harmonic spinors or Fueter sections. A necessary step in showing the suitability of any putative compactification formed in this way is to construct boundary charts for the moduli space. Constructing these charts requires gluing results (see \cite{DWDeformations,PartI,PartII,PartIII} for progress in this direction). Even after showing appropriate gluing results, however, the existence of the desired charts does not follow immediately since it is not {\it a priori} clear that any sequence approaching a given boundary point necessary arises from such a gluing---in other words, the gluing may only construct a subset of the desired chart. In order for the compactifications to be well-behaved, extraneous sequences not captured by gluing constructions must be ruled out: this problem is known as the {\it surjectivity of gluing}. The convergence to $\Z_2$-harmonic spinors and Fueter sections proved in extant convergence results is rather weak (see Section \ref{section4.2} for a precise statement); in particular it leaves open the possibility for the existence of sequences converging to a $\Z_2$-harmonic spinor or Fueter section in a space of low regularity that would necessarily elude gluing constructions (which automatically have convergence in higher-regularity spaces).

  As explained in Section \ref{section5}, attempts to bootstrap the convergence to $\Z_2$-harmonic spinors using standard methods are doomed to fail by the accumulation of powers of the spinor's diverging $L^2$-norm in the relevant estimates, and more robust techniques are required. The theory of concentrating Dirac operators supplies these techniques: for a sequence of solutions to (\refeq{prelimSW1}--\refeq{prelimSW2}) converging to a $\Z_2$-harmonic spinor, the linearization of the equations behaves as a particular type of concentrating Dirac operator, with the diverging $L^2$-norm of the spinor occupying the role of $\e^{-1}$ in the expression (\refeq{concentratingDiracprelim}). Although the perspective and philosophy of concentrating Dirac operators implicitly informs the approach of  \cite{Taubes3dSL2C, Taubes4dSL2C,HWCompactness,TaubesU1SW,WalpuskiZhangCompactness, TaubesVW, TaubesKWNahmPole}, there is more to be gained by making the connection precise. 

This article is best viewed as consisting of three parts. First, Sections \ref{section2}--\ref{section3} extend results about the behavior of solutions to concentrating Dirac operators to a larger class of operators than previously studied, and to Dirac operators with certain types of non-linearities. Second, Sections \ref{section4}--\ref{newsection5}, show that the linearization of generalized Seiberg-Witten equations, by design, fall into this new class of operators. Finally, Section \ref{section5} uses these results to improve convergence results for sequences of solutions to generalized Seiberg-Witten equations in \cite{Taubes3dSL2C, Taubes4dSL2C,HWCompactness,TaubesU1SW,WalpuskiZhangCompactness}  to the $C^\infty_{loc}$ topology.  Although this bootstrapping is the main application given here, the exponential convergence results obtained from the results of Sections \ref{section2}--\ref{section3} are far stronger than is necessary simply for bootstrapping: they additionally provide a more precise geometric picture of how the convergence occurs.  In particular, as discussed in Appendix \ref{appendixA}, they imply that there is an expected invariant length scale for the concentration of curvature along the singular set. The results and techniques developed here may therefore be helpful in addressing questions related to gluing and the surjectivity of gluing (in fact, the present work grew out of the necessity for the full strength of these results in the gluing construction of \cite{PartI, PartIII}).

\bigskip 

 \subsection{Main Results}

The key property that leads to concentration as $\e\to 0$ is a commutation relation of $\mathcal A$ and the principal symbol $\sigma_D$ of the unperturbed operator $D$. These are required to satisfy $$\mathcal A^\star \sigma_D(\xi)=\sigma_D^\star(\xi) \mathcal A$$ for any $\xi \in T^\star Y$. In previous work on concentrating Dirac operators, it has typically been assumed that $\mathcal A$ is invertible on an open dense subset of $Y$; here this assumption is weakened to include the case that $\mathcal A$ vanishes identically along a subbundle of $E$. More precisely, let $r$ denote the maximal rank of $\mathcal A$, and set $\mathcal Z= \{y  \ | \ \text{rank}(\mathcal A(y))< r\}\subseteq Y$. Assume that there is a parallel decomposition $E|_{Y-\mathcal Z}= \frak N\oplus \frak H$ of the Clifford module's restriction to $Y-\mathcal Z$, in which  $\mathcal A$ takes the form  \be \mathcal A=\begin{pmatrix} 0 & 0 \\ 0 & A_\frak H\end{pmatrix}.\label{blockdiagonal}\ee
\noindent Dirac operators satisfying these two assumptions will be referred to as {\it concentrating Dirac operators with fixed degeneracy} (see Definition \ref{concentratingdiracdef} for a more precise definition). 

The first main result shows that the $\frak H=\ker(\mathcal A)^\perp$-components of a solution to the Dirac equation concentrate along $\mathcal Z$ and decay exponentially away from it: 

\begin{thm} \label{concentrationprinciple}\label{maina}
Suppose that $D_\e$ is a concentrating Dirac operator with fixed degeneracy, and  that $\frak q\in \Gamma(E)$ is a solution i.e. $$(D+ \tfrac{1}{\e}\mathcal A)\frak q=0.$$ 
For any compact subset $K\Subset Y-\mathcal Z$, let $R_K=\text{dist}(K,\mathcal Z)$. Then, there exists a compact subset $K'$ with $K\Subset K' \Subset Y-\mathcal Z$, and constants $C,c$ independent of $K$ such that for $\e$ sufficiently small, the components of $\frak q$ in the subbundle $\frak H$ obey 

\be
\|\pi_{\frak H}(\frak q)\|_{C^0(K)} \leq \frac{C}{R_K^{n/2}}  \ \text{Exp} \left(-\frac{c\Lambda_K}{\e}R_K\right) \|\frak q\|_{L^{1,2}(K')} \label{expdecay}
\ee   

\noindent where $\Lambda_K= \underset{y\in K, v\in \frak H_y} \inf\frac{ \|\mathcal A v\|}{\|v \|}$, i.e. $\Lambda_K$ is the minimum fiberwise norm of $A_\mathfrak H^{-1}$ over $K$. 
\end{thm}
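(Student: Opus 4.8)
The plan is a Witten/Agmon-type localization argument: first localize the equation to the subbundle $\frak H$, where the commutation relation yields a Bochner identity with a ``mass gap'' of size $\sim\Lambda_K^2/\e^2$; then run a weighted energy estimate with exponential weight $e^{\rho/\e}$ to obtain exponential smallness in a weighted $L^2$-norm; and finally upgrade to $C^0$ by interior elliptic estimates.

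\textbf{Reduction to $\frak H$ and the Bochner identity.} Combining the commutation relation $\mathcal A^\star\sigma_D(\xi)=\sigma_D^\star(\xi)\mathcal A$ with the structure built into Definition~\ref{concentratingdiracdef} --- in particular that $\frak N=\ker\mathcal A$ is a \emph{parallel} Dirac subbundle of $E|_{Y-\mathcal Z}$, so that $D$ respects the splitting $\frak N\oplus\frak H$ --- one shows that $\frak q_{\frak H}:=\pi_{\frak H}(\frak q)$ satisfies a \emph{homogeneous} concentrating Dirac equation $\big(D_{\frak H}+\tfrac1\e A_{\frak H}\big)\frak q_{\frak H}=0$ on $Y-\mathcal Z$ with $A_{\frak H}$ fiberwise invertible. (Homogeneity is essential: an order-$\e^{-1}$ source coupling $\frak q_{\frak H}$ to $\frak q_{\frak N}$ would give only $O(\e)$ smallness, not exponential decay, and excluding such a term is exactly what the fixed-degeneracy hypothesis buys.) Squaring and applying the Weitzenb\"ock formula gives
\[ \nabla^\star\nabla\,\frak q_{\frak H}+\tfrac1{\e^2}A_{\frak H}^\star A_{\frak H}\,\frak q_{\frak H}+\tfrac1\e\,\mathcal B\,\frak q_{\frak H}+\mathcal R\,\frak q_{\frak H}=0 \]
on $Y-\mathcal Z$, where $\mathcal R$ is a bounded zeroth-order endomorphism (the curvature term) and $\mathcal B:=D_{\frak H}^\star A_{\frak H}+A_{\frak H}^\star D_{\frak H}$ is \emph{also} zeroth order --- this is the one place the commutation relation enters, forcing the first-order parts of the two summands to cancel. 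Fiberwise $A_{\frak H}^\star A_{\frak H}\geq\Lambda(y)^2$, where $\Lambda(y)$ is the least singular value of $A_{\frak H}$ at $y$, so $\Lambda\geq\Lambda_K$ on $K$.

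\textbf{The weighted estimate.} Choose $K'$ with $K\Subset K'\Subset Y-\mathcal Z$ a fixed-proportion neighborhood of $K$ (so $\mathrm{dist}(K',\mathcal Z)\gtrsim R_K$ and $\Lambda_{K'}\gtrsim\Lambda_K$ with a universal constant), a cutoff $\chi\equiv1$ near $K$ with $\mathrm{supp}\,\chi\subset K'$ and $|\nabla\chi|\lesssim R_K^{-1}$, and a Lipschitz weight $\rho\geq0$ vanishing near $\partial K'$, with $|\nabla\rho|\leq\tfrac12\Lambda$ pointwise and $\rho\geq c\Lambda_K R_K$ on $K$ (a truncated $\Lambda$-weighted distance from $\partial K'$). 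Pairing the Bochner identity with $\chi^2 e^{2\rho/\e}\frak q_{\frak H}$ and integrating by parts twice, the Laplacian contributes $\int\chi^2 e^{2\rho/\e}|\nabla\frak q_{\frak H}|^2$, the ``bad'' term $-\tfrac1{\e^2}\int\chi^2 e^{2\rho/\e}|\nabla\rho|^2|\frak q_{\frak H}|^2$, and commutator terms supported on $\mathrm{supp}\,\nabla\chi\subset K'\setminus K$. Since $\tfrac1{\e^2}A_{\frak H}^\star A_{\frak H}\geq\tfrac4{\e^2}|\nabla\rho|^2$ and $\tfrac1\e\|\mathcal B\|+\|\mathcal R\|\ll\tfrac1{\e^2}\Lambda_{K'}^2$ for $\e$ small, the bad and lower-order terms are absorbed into the mass-gap term, leaving $\tfrac{c}{\e^2}\int\chi^2 e^{2\rho/\e}\Lambda^2|\frak q_{\frak H}|^2$ bounded by commutator terms on $\mathrm{supp}\,\nabla\chi$, where $e^{2\rho/\e}\lesssim1$; hence the right side is $\lesssim R_K^{-2}\|\frak q\|_{L^{1,2}(K')}^2$. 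Since $e^{2\rho/\e}\Lambda^2\gtrsim\Lambda_K^2\, e^{2c\Lambda_K R_K/\e}$ on $K$, this yields $\|\frak q_{\frak H}\|_{L^2(K)}\lesssim\tfrac{\e}{\Lambda_K R_K}\, e^{-c\Lambda_K R_K/\e}\|\frak q\|_{L^{1,2}(K')}$.

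\textbf{Bootstrapping and the main obstacle.} For $p\in K$ the ball $B(p,\rho_0 R_K)$ lies in $K'$; interior elliptic estimates for the first-order operator $D_\e$ on this ball, iterated to $L^{k,2}$ with $k>n/2$ (the term $\e^{-1}\mathcal A$ has operator norm $O(\e^{-1})$ and costs only polynomial factors in $\e^{-1}$), together with Sobolev embedding $L^{k,2}\hookrightarrow C^0$ --- whose constant on a ball of radius $\sim R_K$ scales like $R_K^{-n/2}$ --- give $\|\frak q_{\frak H}\|_{C^0(K)}\lesssim R_K^{-n/2}(1+\e^{-1})^k\|\frak q_{\frak H}\|_{L^2(K')}$. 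Combining with the previous step and absorbing the prefactor $(1+\e^{-1})^k\e/(\Lambda_K R_K)$ into the exponential at the cost of slightly decreasing $c$ (valid for $\e$ small) yields \eqref{expdecay}; $c$ is $K$-independent because the $K$-dependence resides entirely in the explicit $\Lambda_K$, $R_K$ and in $\Lambda_{K'}\gtrsim\Lambda_K$. The main obstacle is the weighted estimate: one must construct $\rho$ that is simultaneously large ($\gtrsim\Lambda_K R_K$) on $K$, Lipschitz with $|\nabla\rho|$ dominated by the function $\Lambda$, which \emph{degenerates} along $\mathcal Z$, and uniformly bounded on $\mathrm{supp}\,\nabla\chi$, and then verify that every absorption holds for small $\e$ with $K$-independent constants. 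A secondary technical point, justifying the integrations by parts from the a priori regularity $\frak q\in L^{1,2}_{\mathrm{loc}}$, is handled by noting that $\frak q$ is smooth on $Y-\mathcal Z$ by elliptic regularity for $D_\e$.
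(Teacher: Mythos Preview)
Your argument is correct and follows a genuinely different route from the paper. The paper works with the \emph{scalar} function $|q_1|^2$: from the Weitzenb\"ock formula (your Bochner identity) it derives the pointwise differential inequality $d^\star d|q_1|^2+\e^{-2}|\mathcal A q_1|^2\leq 0$, and then applies Green's identity on a ball $B_{R_0}(y_0)$ against the Dirichlet Green's function of $\Delta_g+M^2$ with $M\sim\Lambda_0/\e$; the exponential decay of that Green's function (compared to its Euclidean counterpart) yields the $C^0$ bound directly, with Kato's inequality converting the right-hand side to the $L^{1,2}$-norm. Your approach instead runs an Agmon-type weighted $L^2$ energy estimate on the vector-valued $\frak q_{\frak H}$ and then bootstraps to $C^0$ via interior elliptic regularity for $D_\e$. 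The paper's method is more direct---it reaches $C^0$ in one step without a bootstrap and without the polynomial-in-$\e^{-1}$ losses you then reabsorb---and its scalar differential-inequality framework is what the paper leans on again in Section~\ref{section3} for the nonlinear extension (where the retained $|\nabla q_1|^2$ term and a dyadic weighted interpolation are used to control the extra $\nabla A_\e$ contribution). Your Agmon approach is the standard semiclassical/spectral-theory technique and is equally valid here; one small point to tidy is the nesting of sets in the final step (you need the exponential $L^2$ bound on a set slightly larger than $K$, containing the balls $B(p,\rho_0 R_K)$, before bootstrapping---a routine adjustment of your ``fixed-proportion neighborhood'' setup).
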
  

\medskip 

The proof of the above result occupies Section \ref{section2}. It is worth remarking that the proof is easily adapted to show that the conclusion of Theorem \ref{maina} also holds if $\frak q$ is an eigenspinor of $D_\e$ with eigenvalue $\lambda< \tfrac{c_K}{\e}$ for a small constant $c_K$ depending depending on the compact set $K$, though this extension is not needed here and the details of the proof are omitted. 

The arguments in the proof of Theorem \ref{maina} can be extended to the following class of non-linear concentrating Dirac operators.  Suppose that $Q: \Gamma(E)\to \Gamma(E)$ is a pointwise non-linear bundle map that takes the form $Q(\frak q)= Q_1(\frak q) \pi_\mathfrak H (\frak q)$ where $Q_1$ is again a pointwise non-linear bundle map and $\pi_\frak H$ the projection onto the subbundle $\frak H$. Said more simply,  it is assumed that the non-linearity has at least a linear factor in the $\mathfrak H$ components. Additionally, assume that $Q_1(\frak q)$ obeys the same commutation relation as $\mathcal A$: that is, $Q_1(\frak q)^\star \sigma_D=\sigma_D^\star Q_1(\frak q)$. The second main result is the following corollary, whose proof occupies Section \ref{section3}.

\begin{cor}\label{cornonlinear} If $\frak q$ solves the non-linear equation \be (D+ \tfrac{1}{\e}\mathcal A)\frak q + Q(\frak q)=f \label{nonlinearDirac} \ee 

\noindent where $D,\mathcal A$ are as in Theorem \ref{maina}, $f\in \Gamma(\frak H^\perp)$, and where $Q(\frak q)=Q_1(\frak q)\pi_{\frak H}(q)$ is of the above form and satisfies \be \e\|Q_1(\frak q)\|_{L^{1,n}(K')} \to 0 \hspace{2cm} \e\|Q_1(\frak q)\|_{C^0(K')}\to 0.\label{quadraticestimates}\ee

\noindent Then the conclusion  (\refeq{expdecay}) of Theorem \ref{maina} holds. 
\end{cor}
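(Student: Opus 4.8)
The plan is to project the nonlinear equation \eqref{nonlinearDirac} onto the subbundle $\mathfrak{H}$, check that the resulting equation for $\pi_{\mathfrak{H}}(\mathfrak{q})$ is again of the form treated in Theorem \ref{maina} — now with a fully invertible, but only mildly regular, perturbation — and then re-run the argument of Section \ref{section2}. As in the proof of Theorem \ref{maina}, the commutation relation $\mathcal A^\star\sigma_D=\sigma_D^\star\mathcal A$ together with $\mathfrak{N}=\ker(\mathcal A)=\ker(\mathcal A^\star)$ forces $\sigma_D$, hence $D$ up to a zeroth-order term, to preserve the parallel splitting $E|_{Y-\mathcal Z}=\mathfrak{N}\oplus\mathfrak{H}$; write $D_{\mathfrak{H}}\ce\pi_{\mathfrak{H}}D\,\iota_{\mathfrak{H}}$ for the induced operator on $\mathfrak{H}$, which still satisfies $\sigma_{D_{\mathfrak{H}}}^2=-\text{Id}$. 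Applying $\pi_{\mathfrak{H}}$ to \eqref{nonlinearDirac}, the inhomogeneity $f$ disappears because $f\in\Gamma(\mathfrak{H}^\perp)$, the linear term becomes $\tfrac{1}{\e}A_{\mathfrak{H}}\pi_{\mathfrak{H}}(\mathfrak{q})$ since $\mathcal A$ is block-diagonal, and — this is where the structural hypothesis $Q(\mathfrak{q})=Q_1(\mathfrak{q})\pi_{\mathfrak{H}}(\mathfrak{q})$ is used — the nonlinear term becomes $\big(\pi_{\mathfrak{H}}Q_1(\mathfrak{q})\iota_{\mathfrak{H}}\big)\pi_{\mathfrak{H}}(\mathfrak{q})$. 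Thus $\mathfrak{h}\ce\pi_{\mathfrak{H}}(\mathfrak{q})$ solves $(D_{\mathfrak{H}}+\tfrac{1}{\e}\widetilde{\mathcal A})\mathfrak{h}=0$, where $\widetilde{\mathcal A}\ce A_{\mathfrak{H}}+\e\,\pi_{\mathfrak{H}}Q_1(\mathfrak{q})\iota_{\mathfrak{H}}$ is an endomorphism of $\mathfrak{H}$. (Should the zeroth-order part of $D$ not preserve the splitting, the projection produces a bounded inhomogeneity built from $\pi_{\mathfrak{N}}(\mathfrak{q})$ on the right; this is identical to the corresponding term appearing in the proof of Theorem \ref{maina} and is dealt with there.)

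Next I would verify that $\widetilde{\mathcal A}$ meets the two requirements on the zeroth-order term and then repeat the weighted-energy (Agmon-type) estimate. The commutation relation $\widetilde{\mathcal A}^\star\sigma_{D_{\mathfrak{H}}}=\sigma_{D_{\mathfrak{H}}}^\star\widetilde{\mathcal A}$ follows by linearity from the relation for $\mathcal A$ (restricted to $\mathfrak{H}$) and the one assumed for $Q_1(\mathfrak{q})$, using that $\sigma_D$ preserves $\mathfrak{H}$. For the lower bound, the first condition in \eqref{quadraticestimates} gives $\|\e\,Q_1(\mathfrak{q})\|_{C^0(K')}\to0$, so once $\e$ is small $\widetilde{\mathcal A}$ is fiberwise invertible over $K'$ with $\|\widetilde{\mathcal A}^{-1}\|\le 2\Lambda_{K'}^{-1}$; this degrades the exponential rate by at most a harmless factor of $2$, which is absorbed into the constant $c$ in \eqref{expdecay}. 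Granting these, the Weitzenböck identity for $(D_{\mathfrak{H}}+\tfrac{1}{\e}\widetilde{\mathcal A})^\star(D_{\mathfrak{H}}+\tfrac{1}{\e}\widetilde{\mathcal A})$, the choice of Agmon weight $e^{\rho/\e}$ with $|d\rho|$ a small fixed multiple of $\Lambda_{K'}$, the absorption of the weight-gradient term into the positive potential $\tfrac{1}{\e^2}\widetilde{\mathcal A}^\star\widetilde{\mathcal A}$, and the concluding interior elliptic estimate (which yields the $R_K^{-n/2}$ prefactor after rescaling to unit scale) all proceed exactly as in Section \ref{section2}; the only new feature is the contribution of $\widetilde{\mathcal A}-A_{\mathfrak{H}}=\e\,\pi_{\mathfrak{H}}Q_1(\mathfrak{q})\iota_{\mathfrak{H}}$.

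The step that genuinely consumes the hypotheses \eqref{quadraticestimates}, and which I expect to be the crux, is that $\widetilde{\mathcal A}$ is not smooth: it is only as regular as the solution $\mathfrak{q}$. The cross-term of the Weitzenböck identity above has a zeroth-order remainder involving $\nabla\widetilde{\mathcal A}=\nabla A_{\mathfrak{H}}+\e\,\pi_{\mathfrak{H}}(\nabla Q_1(\mathfrak{q}))\iota_{\mathfrak{H}}$, so besides the terms already present in Section \ref{section2} the weighted-energy identity picks up an error of the shape $\int\chi^2 e^{2\rho/\e}\,|\nabla Q_1(\mathfrak{q})|\,|\mathfrak{h}|^2$, in which $\nabla Q_1(\mathfrak{q})$ is controlled only in $L^n(K')$. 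I would bound it using Hölder against $\|\nabla Q_1(\mathfrak{q})\|_{L^n(K')}$ and the Sobolev inequality $\|\chi e^{\rho/\e}\mathfrak{h}\|_{L^{2n/(n-1)}}^2\lesssim\|\chi e^{\rho/\e}\mathfrak{h}\|_{L^{1,2}}^2$, and then absorb the result into the left-hand terms $\|\nabla(\chi e^{\rho/\e}\mathfrak{h})\|_{L^2}^2$ and $\tfrac{1}{\e^2}\|\widetilde{\mathcal A}\,\chi e^{\rho/\e}\mathfrak{h}\|_{L^2}^2$; the smallness that makes this absorption work is exactly $\e\|Q_1(\mathfrak{q})\|_{L^{1,n}(K')}\to0$ (together with $\e\|Q_1(\mathfrak{q})\|_{C^0(K')}\to0$ for the lower-order contributions and for the concluding elliptic bootstrap applied to \eqref{nonlinearDirac} with $Q(\mathfrak{q})=Q_1(\mathfrak{q})\mathfrak{h}$ treated as an inhomogeneity), once — as in Section \ref{section2} — the estimate is carried out ball-by-ball at scale $\e$ so that the Sobolev inequality supplies the compensating powers of $\e$. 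Put conceptually: $L^{1,n}\cap C^0(K')$ is a Banach algebra that acts boundedly on the Sobolev spaces occurring in Section \ref{section2}'s argument and in the bootstrap, and \eqref{quadraticestimates} says precisely that multiplication by $\e\,Q_1(\mathfrak{q})$ is a small operator on all of them; granting that, every term it produces is negligible for $\e$ small, and the conclusion \eqref{expdecay} follows.
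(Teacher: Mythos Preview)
Your proposal is essentially correct and identifies the same crux as the paper: one rewrites the nonlinear equation as a linear concentrating Dirac equation with perturbed zeroth-order term $A_{\mathfrak H}+A_\e$ where $A_\e=\e\,Q_1(\mathfrak q)$, observes that the commutation relation and the $C^0$ lower bound transfer, and isolates the only genuine obstruction as the term $\tfrac{1}{\e}\langle q_1,(\nabla A_\e)q_1\rangle$ in the Weitzenb\"ock remainder, which is then absorbed via H\"older plus an interpolation/Sobolev step using the smallness $\|A_\e\|_{L^{1,n}}\to 0$ and the retained $|\nabla q_1|^2$ term. This is exactly the content of the paper's Proposition \ref{perturbationcase} and Lemma \ref{interpolationclaim}.

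The technical route differs, however, and your reference to ``as in Section \ref{section2}'' is not quite accurate. Neither Section \ref{section2} nor Section \ref{section3} uses Agmon weights $e^{\rho/\e}$ and an integrated $L^2$ energy identity; instead, the paper works with the pointwise differential inequality $\Delta_g|q_1|^2+\tfrac{1}{\e^2}|\mathcal Aq_1|^2\le 0$ and extracts decay by representing $|q_1|^2(y_0)$ against the Dirichlet Green's function of $\Delta_g+M^2$ on a ball, together with the explicit exponential bound on that Green's function from Appendix \ref{Greensappendix}. In the nonlinear case the extra term survives on the right of this Green's representation, and Lemma \ref{interpolationclaim} absorbs it by a dyadic decomposition of the ball into sectors on which the Green's function is comparable to a constant (via the Harnack inequality), followed by the standard Gagliardo--Nirenberg interpolation on each sector and the same Young's-inequality splitting $\e\|\nabla q_1\|^2+\tfrac{1}{\e}\|q_1\|^2$ that your argument would use. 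Your Agmon-weight approach is a legitimate alternative and arguably more streamlined; the paper's Green's-function route has the virtue of reusing Section \ref{section2} verbatim and yielding the pointwise $C^0$ bound directly, without a separate concluding elliptic estimate.
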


\medskip 

The motivation for and application of the above results come from investigations of the compactness of moduli spaces of solutions to generalized Seiberg-Witten equations (see Sections Sections \ref{section4.1}--\ref{section4.2} below). As explained in the introduction, it is known in many cases of interest that sequences of solutions to (\refeq{prelimSW1}--\refeq{prelimSW2}) lacking bounded $L^2$ subsequences subconverge after renormalization to a type of $\Z_2$-harmonic spinor. There are four cases of interest: 
\medskip

{\bf Case (I):} The two-spinor Seiberg-Witten equations on a compact 3-manifold $Y$  \cite{HWCompactness} 

{\bf Case (II):} The equations for a flat $\text{SL}(2,\C)$ connection on a compact 3-manifold $Y$\cite{Taubes3dSL2C,WalpuskiZhangCompactness}  

{\bf Case (III):} The two-spinor Seiberg-Witten equations on a compact 4-manifold  $X$ \cite{TaubesU1SW} 

{\bf Case (IV):} The $\text{SL}(2,\C)$ ASD equations on a compact 4-manifold $X$  \cite{Taubes4dSL2C}

\medskip
\noindent  The precise statements of the known compactness results for these cases is amalgamated in Theorem \ref{compactness} in Section \ref{section4.2}. A precise definition of $\Z_2$-harmonic spinors is given in the subsequent Definition \ref{Z2Harmonic}. 

\begin{thm}\label{mainb} Suppose that a sequence $(\Phi_i, A_i, \e_i)$ of re-normalized solutions to a generalized Seiberg-Witten equation converge to a $\Z_2$-harmonic spinor $(\mathcal Z_0, A_0, \Phi_0)$ in the sense of Theorem \ref{compactness} for one of Cases (I)--(IV) above. Then, in the limit $\e_i\to 0$ 

\medskip

\noindent {Case (I)--(III):} The equations (\refeq{prelimSW1}--\refeq{prelimSW2}) behave as a non-linear concentrating Dirac operator satisfying  \indent \indent \indent  \indent  \   \ \  \ \ the assumptions of Corollary \ref{cornonlinear}.
\smallskip

\hspace{.05cm}{Case (IV)}  :     If, in addition to the conclusion of Theorem \ref{compactness}, the sequence $(\Phi_i,A_i)$ satisfies   $A_i\to A_0$ \indent\indent   \indent \indent \ \  \ \  \ in $L^{1,p}_{loc}$ and $\Phi_i \to \Phi_0$ in $L^{2,p}_{loc}$ for any $p>2$, then the same conclusion holds. 

\medskip 

Additionally, in these situations $$\Phi_i\overset{C^\infty_\text{loc}} \lre \Phi_0 \hspace{2cm} A_i \overset{C^\infty_\text{loc}}\lre A_0$$ where local convergence mean on compact subsets $K\subseteq Y-\mathcal Z_0$ (resp. $X-\mathcal Z_0$ in Cases (III)--(IV)).  Moreover, in all cases the limiting connection satisfies $F_{A_0}=0$ (resp. $F_{A_0}^+=0$). 
\end{thm}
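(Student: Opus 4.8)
The plan is to derive Theorem \ref{mainb} from Corollary \ref{cornonlinear}: one recasts the gauge-fixed generalized Seiberg-Witten equations, linearized about the limiting data $(\Phi_0,A_0)$, as a non-linear concentrating Dirac equation with fixed degeneracy, applies the exponential decay of Corollary \ref{cornonlinear}, and then bootstraps. To set this up I would recall from Section \ref{section4} the explicit form of the equations in each of Cases (I)--(IV). Fixing Coulomb gauge relative to $A_0$ and subtracting the limit, the pair $\frak q_i\ce(\Phi_i-\Phi_0,\,A_i-A_0)$, assembled into a section of an appropriate Clifford module $E$ over $Y$ (resp.\ $X$), satisfies an equation of the form $(D+\tfrac{1}{\e_i}\mathcal A)\frak q_i+Q(\frak q_i)=f_i$, in which: $D$ is the $A_0$-twisted Dirac operator together with $d\oplus d^\star$ on the connection factor; $\mathcal A$ is the zeroth order operator built algebraically out of Clifford multiplication and the polarization of $\mu$ evaluated on $\Phi_0$; $Q$ is the intrinsic quadratic term of the equations, which takes the shape $Q(\frak q)=Q_1(\frak q)\,\pi_{\frak H}(\frak q)$ because each of its summands carries a factor valued in the fluctuation subbundle $\frak H=\ker(\mathcal A)^\perp$; and $f_i\in\Gamma(\frak H^\perp)$ records the failure of $(\Phi_0,A_0)$ to be an exact solution, which the same algebra confines to $\frak H^\perp$.

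The core of the proof is to check the hypotheses of Corollary \ref{cornonlinear}. The commutation relations $\mathcal A^\star\sigma_D=\sigma_D^\star\mathcal A$ and $Q_1(\frak q)^\star\sigma_D=\sigma_D^\star Q_1(\frak q)$ hold because $\mu$ is, by construction, compatible with Clifford multiplication; this is the algebraic computation of Section \ref{section4}. For the block form \eqref{blockdiagonal} one must exhibit a $\nabla_{A_0}$-parallel splitting $E|_{Y-\mathcal Z_0}=\frak N\oplus\frak H$ along which $\mathcal A$ vanishes on $\frak N$: the rank of $\mathcal A$ at a point is governed solely by whether $\Phi_0$ vanishes there, so $\frak N=\ker(\mathcal A)$ has locally constant rank on $Y-\mathcal Z_0$ and its degeneracy locus is exactly the singular set $\mathcal Z_0$; and $\frak N$ is $\nabla_{A_0}$-parallel because a $\Z_2$-harmonic spinor reduces the structure of $E$ over $Y-\mathcal Z_0$ --- this is precisely where the hypothesis that the weak limit is a $\Z_2$-harmonic spinor in the sense of Definition \ref{Z2Harmonic} is used. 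Finally, the quadratic bounds \eqref{quadraticestimates}, namely $\e_i\|Q_1(\frak q_i)\|_{L^{1,n}(K')}\to 0$ and $\e_i\|Q_1(\frak q_i)\|_{C^0(K')}\to 0$ on compact $K'\Subset Y-\mathcal Z_0$, follow from uniform local bounds on $\frak q_i$ together with the factor $\e_i\to 0$, since $Q_1(\frak q_i)$ depends linearly on $\frak q_i$. In Cases (I)--(III) the required bounds are part of Theorem \ref{compactness}; in Case (IV) the convergence asserted by Theorem \ref{compactness} is too weak for this, and it is exactly at this step that the extra hypotheses $A_i\to A_0$ in $L^{1,p}_{loc}$ and $\Phi_i\to\Phi_0$ in $L^{2,p}_{loc}$ must be invoked. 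This proves the first assertion.

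Granting all of this, Corollary \ref{cornonlinear} gives $\pi_{\frak H}(\frak q_i)\to 0$ in $C^0_{loc}(Y-\mathcal Z_0)$, exponentially fast in $\e_i^{-1}$. Wherever the limiting spinor has trivial stabilizer --- the generic case --- the entire connection factor of $E$ lies in $\frak H$, so this already yields $A_i\to A_0$ in $C^0_{loc}$; more importantly, once the $\frak H$-components are known to be negligible the equation for $\pi_{\frak N}(\frak q_i)$ decouples to an elliptic system with uniformly bounded coefficients and exponentially small inhomogeneity, whose solution $\pi_{\frak N}(\frak q_i)$ converges weakly to $0$ in $L^2_{loc}$ by Theorem \ref{compactness}, so iterated interior elliptic estimates upgrade the convergence $\Phi_i\to\Phi_0$, $A_i\to A_0$ to $C^\infty_{loc}$. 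This is exactly the bootstrap whose naive form is obstructed: the accumulation of powers of $\e_i^{-1}$ flagged in Section \ref{section5} is carried by the $\frak H$-components, and Corollary \ref{cornonlinear} removes it. The last claim, $F_{A_0}=0$ (resp.\ $F_{A_0}^+=0$), then follows by passing to the limit in the curvature equation \eqref{prelimSW2} --- a legitimate operation once $C^\infty_{loc}$ convergence is in hand --- using in addition that, by the algebra of $\mu$, the renormalized right-hand side $\mu(\Phi_i,\Phi_i)$ is controlled by the $\frak H$-components of the spinor, whose exponential decay beats the factor $\e_i^{-2}$ and forces the rescaled curvature to vanish in the limit.

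I expect the principal obstacle to be the verification of the fixed-degeneracy structure in Section \ref{section4}: exhibiting, for each of the four equations, the $\nabla_{A_0}$-parallel decomposition $E=\frak N\oplus\frak H$ relative to which $\mathcal A$ is block-diagonal with degeneracy locus $\mathcal Z_0$ and relative to which the non-linearity $Q$ factors through $\pi_{\frak H}$. This is genuinely case-by-case and turns on the Clifford-algebraic form of $\mu$ and on the local geometry of $\Z_2$-harmonic spinors. A secondary subtlety, already visible in the statement, is that Theorem \ref{compactness} alone does not provide enough regularity in Case (IV) to control $Q_1(\frak q_i)$, which is the reason for the supplementary convergence hypothesis there.
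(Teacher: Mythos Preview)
Your outline matches the paper's approach closely: recast the gauge-fixed equations as a non-linear concentrating Dirac operator, verify the concentration and fixed-degeneracy hypotheses case-by-case (Lemma \ref{concentrationL}, Proposition \ref{fixeddegeneracyL}, Lemma \ref{formofQ}), invoke Corollary \ref{cornonlinear} to get exponential decay of the $\frak H$-components, and then bootstrap. Your identification of the parallel splitting as the main case-by-case obstacle is accurate.

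There is, however, one step you have underestimated. You assert that the quadratic bounds \eqref{quadraticestimates} in Cases (I)--(III) ``are part of Theorem \ref{compactness}'' because $Q_1(\frak q_i)$ is linear in $\frak q_i$ and $\frak q_i$ is uniformly bounded. This is fine in the abelian Cases (I) and (III), where $Q_1$ involves only the spinor perturbation $\e_i\ph_i=\Phi_i-\Phi_0$, which Theorem \ref{compactness} controls in $L^{2,2}_{loc}\hookrightarrow L^{1,n}\cap C^0$. But in Case (II) the non-abelian curvature contributes terms in $Q_1$ involving the connection perturbation $a_i$, and Theorem \ref{compactness} only gives $a_i\to 0$ weakly in $L^{1,2}_{loc}$, which in dimension $3$ embeds neither in $L^{1,3}$ nor in $C^0$. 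The paper closes this gap in Lemma \ref{nonlinear3d} by a separate bootstrapping argument on the curvature equation (accruing powers of $\e^{-1}$, but only finitely many) to obtain $\e_i a_i\to 0$ in $L^{1,p}$ for some $p>3$; an analogous argument (Lemma \ref{nonlinear4d}) handles Case (IV). So Case (II) is not, as you suggest, easier than Case (IV) --- it requires the same kind of preliminary bootstrap, just from a slightly better starting point.
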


\medskip

\begin{rem}
In Cases (I)--(III), Theorem \ref{mainb} applies directly to strengthen the results of \cite{Taubes3dSL2C,HWCompactness,TaubesU1SW,WalpuskiZhangCompactness}.  In Case (IV), the additional requirement beyond the
conclusions of \cite{Taubes4dSL2C} that $A_i\to A_0$ in $L^{1,p}_{loc}$ and $\Phi_i\to \Phi_0$ in $L^{2,p}_{loc}$ for some $p>2$ is necessary. This is needed to overcome the quadratic non-linearity in $F_A^+$ which is borderline for the relevant Sobolev embeddings in dimension 4 (this assumption is not necessary in Case (III), which is abelian). It is unclear if this slightly stronger convergence result than Theorem \ref{compactness} can be proved by extending the techniques of \cite{Taubes4dSL2C, TaubesVW}, or if a new approach is required to bridge the gap from the results therein to the point where Theorem \ref{mainb} applies. 

\end{rem}

\begin{rem}  \ 
\cite{TaubesVW} proves a version of Theorem \ref{compactness} for the Vafa-Witten equations as well, though this case is notably absent from Theorem \ref{mainb}. Although the equations have a similar form to the equations of Cases (I)--(IV), it is not expected for the Vafa-Witten equations that the sequence of connections $A_i$ should converge. In particular, for these equations there exist sequences of solutions such that the $L^2$-norm of the curvature over any compact set with non-empty interior diverges. Solutions with this behavior are the subject of forthcoming work of C. Taubes \cite{TaubesVWCounterexamples}, and were known to E. Witten and others prior to the work \cite{TaubesVW}. For sequences along which the connections $A_i$ do happen to converge in $L^{1,p}_{loc}$ for some $p>2$, then the conclusions of Theorem \ref{mainb} apply---the proof is trivially different from Case (IV) and is omitted.    
\end{rem}

\begin{rem}\label{rem1.6}
Also absent from the cases enumerated in Theorem \ref{mainb} are the compactness theorems for the $\text{ADHM}_{1,2}$ Seiberg-Witten equations established in \cite{WalpuskiZhangCompactness} and for Seiberg-Witten equations with $r>2$ spinors from \cite{TaubesU1SW, HWCompactness}. The approach of Theorem \ref{mainb} fails in these cases, because the $\mathfrak H$-components of the equations are too strongly coupled to the other components. This occurs in two slightly different ways:  for the $\text{ADHM}_{1,2}$ Seiberg-Witten equations, the linearized equations properly fit into the framework of Theorem \ref{maina}, but the non-linear terms do not satisfy the hypotheses of Corollary \ref{cornonlinear}. For the Seiberg-Witten equations with $r>2$ spinors, the reverse occurs: the non-linear terms have the desired form, but the splitting \refeq{blockdiagonal} fails to be parallel and the $\mathfrak H$-components are too strongly coupled to the other components for the linearized equations. It is an interesting task to investigate whether Theorem \ref{maina} may be extended to the case where the splitting is not parallel. 
\end{rem}

\bigskip 
\section*{Acknowledgements}
This work grew out of the authors Ph.D. thesis. The author is grateful to his advisor Clifford Taubes for his support and suggestions. This work also benefitted from conversations with Rafe Mazzeo, Aleksander Doan, and Thomas Walpuski, and was supported by a National Science Foundation Graduate Research Fellowship and by National Science Foundation Grant No. 2105512. 

\bigskip 
\section{Concentrating Dirac Operators}

\label{section2}

Let $(Y,g)$ be an n-dimensional Riemannian manifold (not necessarily compact), and $E\to Y$ a real Clifford module. That is to say, $E$ is a real vector bundle equipped with i) an inner product, denoted $\br -,-\kt$, ii)  a metric-compatible connection denoted $\nabla$, and iii) a Clifford 
multiplication $\sigma_D: T^*Y\to \text{End}(E)$ obeying the Clifford relation 
\be  \sigma_D(\xi) \sigma_D(\eta) + \sigma_D(\xi) \sigma_D(\eta) =-2\br \xi, \eta\kt. \ee

\noindent If $n$ is even, we assume that there is a splitting $E=E^+\oplus E^-$ with respect to which $\sigma_D$ is off-diagonal. Consider an $\e$-parameterized family of Dirac operators $D_\e: \Gamma(E)\to \Gamma(E)$ of the form 
\begin{equation}
D_\e \mathfrak q= \left(D+ \frac{1}{\e}\mathcal A\right)\mathfrak q
\end{equation}
where $D$ is the Dirac operator $\sigma_D \circ \nabla$, and $\mathcal A:E\to E$ is a bundle map. In the case that $n$ is even, we denote by the same symbol the Dirac operator $D: \Gamma(E^+)\to \Gamma(E^-)$ and assume that $\mathcal A: E^+\to E^-$. The operators $D_\e$ and $\mathcal A$ are only assumed to be $\R$-linear, even in the case that $E$ possesses a complex structure. 

Let $r$ denote the maximal rank of $\mathcal A$, and define the {\bf {singular set}} of $\mathcal A$ by $$\mathcal Z:= \{ y\in Y \ | \ \text{rank}(\mathcal A(y))<r\}.$$  
That is, the set where $\mathcal A$ has strictly lower than its maximal rank. We assume that $\mathcal A$ is continuous, hence $\mathcal Z$ is closed. We additionally assume, for convenience, that $\mathcal Z$ is non-empty (else all assertions are vacuous). In the case of the Seiberg-Witten equations linearized at a $\Z_2$-harmonic spinor $\Phi_0$, this set coincides with the singular set $\mathcal Z=|\Phi_0|^{-1}(0)$ of the $\Z_2$-harmonic spinor. We do NOT assume that $\mathcal A$ is smooth (and indeed this is not the case in the setting of $\Z_2$-harmonic spinors), nor do we assume that $\mathcal Z$ is a submanifold or a union of submanifolds.

The focus will be on the following class of operators. 
\begin{defn}\label{concentratingdiracdef}
A perturbed Dirac operator $$D_\e= D+\tfrac{1}{\e}\mathcal A$$ is said to be a  {\bf{concentrating Dirac operator with fixed degeneracy}} if it obeys the following two properties: 

\begin{enumerate}
\item[1).] {\bf {(Concentration property)}} The principal symbol $\sigma_D$ of $D$ obeys \be\mathcal A^\star  \sigma_D(\xi) = \sigma_D(\xi)^\star  \mathcal A \hspace{2cm}\forall \  \xi\in T^\star Y\label{concentrationproperty}\ee
\item[2).] {\bf (Fixed Degeneracy)}: On $Y-\mathcal Z$, there is a bundle splitting $E|_{Y-\mathcal Z}= \mathfrak N \oplus \mathfrak H$ (resp. $E^\pm|_{Y-\mathcal Z} = \mathfrak N^\pm \oplus \mathfrak H^\pm$ if $n$ is even) which is parallel with respect to $\nabla$ and preserved by $\sigma_D$, and in this splitting $\mathcal A$ takes the form \be\mathcal A=\begin{pmatrix}
0 & 0 \\ 0 & A_{\mathfrak H} \end{pmatrix}\label{fixeddegeneracy}\ee 
where $A_{\mathfrak H}: \mathfrak H\to \mathfrak H$ (resp. $A_\mathfrak H: \mathfrak H^+\to \mathfrak H^-$).   \end{enumerate}

\end{defn}

Note that item 2) implies that $\frak N=\ker(\mathcal A)$. By taking $\mathfrak N$ to be empty, this definition generalizes the class of operators previously considered in \cite{ManosConcentrationI,ManosConcentrationII, RussianConcentratingIndex} where $\mathcal A$ is invertible on an open dense set. In Section \ref{section4}, it is shown that generalized Seiberg-Witten equations provide a rich class of examples for which the degeneracy is non-trivial.

\bigskip 

The remainder of this section establishes Theorem \ref{concentrationprinciple}. To begin, we prove a general Weitzenb\"ock formula for concentrating Dirac operators with fixed degeneracy. 
\begin{lm} A concentrating Dirac operator $D_\e$ satisfies $$D_\e ^\star D_\e = D^\star D + \frac{1}{\e^2} \mathcal A^\star \mathcal A + \frac{1}{\e} \mathfrak B$$ where $\mathfrak B$ is a zeroeth order term. 
\label{weitzenbockabstract}
\end{lm}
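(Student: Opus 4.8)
The plan is to expand $D_\e^\star D_\e$ directly using $D_\e = D + \tfrac{1}{\e}\mathcal A$ and identify the cross term $\tfrac{1}{\e}(D^\star \mathcal A + \mathcal A^\star D)$ as a zeroth-order operator $\tfrac1\e\mathfrak B$. Writing out the product,
\[
D_\e^\star D_\e = \left(D^\star + \tfrac1\e\mathcal A^\star\right)\left(D + \tfrac1\e\mathcal A\right) = D^\star D + \tfrac{1}{\e^2}\mathcal A^\star\mathcal A + \tfrac1\e\left(D^\star\mathcal A + \mathcal A^\star D\right),
\]
so everything reduces to showing $\mathfrak B := D^\star\mathcal A + \mathcal A^\star D$ is zeroth order, i.e. that the first-order (symbol) contributions cancel. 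The only subtlety is that $\mathcal A$ is merely continuous, not smooth, so $D^\star\mathcal A$ must be interpreted distributionally; I will phrase the identity as one of operators on $\Gamma(E)$ (or in the weak sense), which is all that is needed for the energy estimates in Section \ref{section2}.

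First I would compute the principal symbol of $\mathfrak B$. Since $\mathcal A$ is a bundle map (order zero) and $D = \sigma_D\circ\nabla$ has symbol $\sigma_D(\xi)$, the composition $\mathcal A^\star D$ has symbol $\mathcal A^\star\sigma_D(\xi)$, and $D^\star\mathcal A$ — where $D^\star$ has symbol $\sigma_D(\xi)^\star$ (up to the standard sign, which I would track via the formal adjoint convention $D^\star = -\sigma_D\circ\nabla + (\text{order }0)$) — contributes symbol $\sigma_D(\xi)^\star\mathcal A$. Hence the symbol of $\mathfrak B$ is (a scalar multiple of) $\sigma_D(\xi)^\star\mathcal A - \mathcal A^\star\sigma_D(\xi)$, which vanishes for all $\xi\in T^\star Y$ precisely by the concentration property \eqref{concentrationproperty}. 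Therefore $\mathfrak B$ is an operator of order $\le 0$, i.e. a bundle map. To make this fully rigorous at the level of differential operators rather than symbols, I would instead compute pointwise in a local orthonormal frame $\{e_i\}$: using $D = \sum_i \sigma_D(e^i)\nabla_i$ and the product rule, $D^\star\mathcal A(\frak q) = -\sum_i \sigma_D(e^i)\nabla_i(\mathcal A\frak q) + c\,\frak q = -\sum_i \sigma_D(e^i)(\nabla_i\mathcal A)\frak q - \sum_i\sigma_D(e^i)\mathcal A\nabla_i\frak q + (\text{order }0)$, and the genuinely first-order piece $-\sum_i\sigma_D(e^i)\mathcal A\nabla_i\frak q$ is cancelled against the corresponding piece of $\mathcal A^\star D = \sum_i \mathcal A^\star\sigma_D(e^i)\nabla_i$ exactly when \eqref{concentrationproperty} holds. (When $n$ is even and $\mathcal A\colon E^+\to E^-$, the same computation applies with $D\colon\Gamma(E^+)\to\Gamma(E^-)$ and $D^\star\colon\Gamma(E^-)\to\Gamma(E^+)$.)

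The main obstacle is bookkeeping rather than conceptual: getting the adjoint/sign conventions for $D^\star$ consistent with the stated form of \eqref{concentrationproperty}, and handling the low regularity of $\mathcal A$ so that terms like $\nabla\mathcal A$ are not literally differentiated. The cleanest route around the regularity issue is to avoid ever writing $\nabla\mathcal A$: test the identity $D_\e^\star D_\e = D^\star D + \tfrac{1}{\e^2}\mathcal A^\star\mathcal A + \tfrac1\e\mathfrak B$ against a smooth compactly supported section and integrate by parts, moving all derivatives onto $D$ and the test section; the symbol cancellation from \eqref{concentrationproperty} then shows the remaining term pairs against at most zero derivatives of the test section, so it defines a bundle map $\mathfrak B$ (measurable and locally bounded, which suffices). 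I would remark that $\mathfrak B$ depends on $\mathcal A$ and the geometry but, crucially, that $\|\mathfrak B\|$ is controlled on compact sets — this uniform bound is what the proof of Theorem \ref{concentrationprinciple} will actually invoke.
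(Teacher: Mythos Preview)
Your proposal is correct and follows essentially the same route as the paper: expand $D_\e^\star D_\e$, and use the concentration property \eqref{concentrationproperty} in a local frame to cancel the first-order pieces of $D^\star\mathcal A + \mathcal A^\star D$, leaving $\mathfrak B = -\sigma_D(e^j)^\star(\nabla_j\mathcal A)$. Your added care about the low regularity of $\mathcal A$ and the weak interpretation of $\nabla\mathcal A$ is a point the paper glosses over but is consistent with how $\mathfrak B$ is used downstream.
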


\begin{proof}
The key point is that the concentration property (\refeq{concentrationproperty}) ensures that the first-order cross terms cancel.  
\bea
D_\e ^\star D_\e\mathfrak q 
&=& D^\star D \mathfrak q+ \tfrac{1}{\e^2}\mathcal A^\star  \mathcal A \mathfrak q + \tfrac{1}{\e}\left(D^\star  \mathcal A+ \mathcal A^\star  D\right)\mathfrak q\\
&=&  D^\star D \mathfrak q+ \tfrac{1}{\e^2}\mathcal A^\star \mathcal A \mathfrak q + \tfrac{1}{\e}\left(\sigma_D(e^j)^\star \nabla_j^\star   (\mathcal A\mathfrak q)+ \mathcal A^\star \sigma_D(e^j) \nabla_j\mathfrak q \right)\\
&=& D^\star D \mathfrak q+ \tfrac{1}{\e^2}\mathcal A^\star  \mathcal A \mathfrak q + \tfrac{1}{\e}\big(-\sigma_D(e^j)^\star  (\nabla_j  \mathcal A)\mathfrak q + {\cancel {[- \sigma_D(e^j)^\star   \mathcal A + \mathcal A^\star \sigma_D(e^j)]}}\nabla_j\mathfrak q \big)\\
&=& D^\star D \mathfrak q+ \tfrac{1}{\e^2}\mathcal A^\star  \mathcal A \mathfrak q + \tfrac{1}{\e}\left(-\sigma_D(e^j)^\star (\nabla_j \mathcal A)\mathfrak q \right)\\
&=& D^\star D\mathfrak q +\tfrac{1}{\e^2}\mathcal A^\star \mathcal A\mathfrak q +\tfrac{1}{\e}\mathfrak B\mathfrak q
\eea
where the last line is taken as the definition of $\mathfrak B$. 
\end{proof}

\bigskip

We now prove Theorem \ref{concentrationprinciple}.  The proof requires two brief lemmas. For these, we denote the components of a spinor $\frak q$ in the splitting of (\refeq{fixeddegeneracy}) by 
 $$\mathfrak q=(q_0,q_1) \in \mathfrak N\oplus \mathfrak H.$$ 
 \noindent Additionally, we fix a compact subset $K\Subset Y-\mathcal Z$. To avoid separating cases, it is understood in the above expression, and in the discussion that follows that in the even-dimensional case the bundles $\mathfrak N\oplus \mathfrak H$ undecorated by superscripts refer to $\mathfrak N^+$ and  $\mathfrak H^+$.

\begin{lm} If $D_\e \frak q=0$, then for sufficiently small $\e$ the scalar quantity $|q_1|^2$ satisfies the differential inequality \be d^\star d |q_1|^2 +\frac{1}{\e^2}|\mathcal Aq_1|^2\leq 0 .\label{diffinequality3.2}\ee 
\end{lm}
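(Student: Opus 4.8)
The plan is to derive the differential inequality from the Weitzenböck formula of Lemma \ref{weitzenbockabstract} together with the Kato-type computation relating $d^\star d|q_1|^2$ to $\langle \nabla^\star\nabla q_1, q_1\rangle$. First I would write, using that $\nabla$ is metric-compatible,
\[
\tfrac12 d^\star d |q_1|^2 = \langle \nabla^\star \nabla q_1, q_1\rangle - |\nabla q_1|^2 \leq \langle \nabla^\star\nabla q_1, q_1\rangle.
\]
So it suffices to bound $\langle \nabla^\star\nabla q_1, q_1\rangle$ from above by $-\tfrac{1}{\e^2}|\mathcal A q_1|^2$ (up to the factor of $2$, which is harmless). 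The point is that $q_1 = \pi_{\mathfrak H}(\mathfrak q)$, and since the splitting $E|_{Y-\mathcal Z} = \mathfrak N \oplus \mathfrak H$ is parallel, preserved by $\sigma_D$, and block-diagonalizes $\mathcal A$, the operator $D_\e$ respects the splitting on $Y - \mathcal Z$; hence $D_\e^\star D_\e$ does too, and on the $\mathfrak H$-block it reads $\nabla^\star\nabla + \tfrac{1}{\e^2}A_{\mathfrak H}^\star A_{\mathfrak H} + \tfrac{1}{\e}\mathfrak B_{\mathfrak H} + (\text{curvature terms from }D^\star D)$. Since $D_\e\mathfrak q = 0$ implies the $\mathfrak H$-component $(D_\e^\star D_\e\mathfrak q)_1 = 0$ (here one should be slightly careful: $D_\e\mathfrak q = 0$ directly, so certainly $D_\e^\star D_\e \mathfrak q = 0$ and its $\mathfrak H$-part vanishes), pairing with $q_1$ gives
\[
\langle \nabla^\star\nabla q_1, q_1\rangle + \tfrac{1}{\e^2}|\mathcal A q_1|^2 + \big\langle \big(\tfrac{1}{\e}\mathfrak B + \mathcal R\big)_{\mathfrak H} q_1, q_1\big\rangle = 0,
\]
where $\mathcal R$ collects the zeroth-order curvature terms of the ordinary Weitzenböck formula for $D^\star D$.

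Next I would absorb the lower-order error terms. On the compact set $K \Subset Y - \mathcal Z$ the operator $A_{\mathfrak H}$ is invertible with a uniform lower bound $\Lambda_K > 0$ on $|A_{\mathfrak H} v|/|v|$, so $|\mathcal A q_1|^2 \geq \Lambda_K^2 |q_1|^2$. Meanwhile $\mathfrak B$ and $\mathcal R$ are zeroth-order and bounded on $K$ by a constant $C_K$ independent of $\e$, so $|\langle(\tfrac{1}{\e}\mathfrak B + \mathcal R)_{\mathfrak H}q_1, q_1\rangle| \leq (\tfrac{C_K}{\e} + C_K)|q_1|^2$. Comparing with the $\tfrac{1}{\e^2}|\mathcal A q_1|^2 \geq \tfrac{\Lambda_K^2}{\e^2}|q_1|^2$ term, for $\e$ small enough that $\tfrac{\Lambda_K^2}{\e^2}$ dominates $\tfrac{C_K}{\e} + C_K$ — say $\e \leq \e_0(K)$ — even after we only want to keep a slightly-shrunk coefficient, the error is swamped. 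To be safe about keeping exactly $\tfrac{1}{\e^2}|\mathcal A q_1|^2$ on the left (rather than a fraction of it), I would note that it is cleaner to state the conclusion with the understanding that the inequality holds for sufficiently small $\e$; if one insists on the literal coefficient $1$, one can instead route the curvature error through $|\mathcal A q_1|^2$ itself since $|q_1|^2 \leq \Lambda_K^{-2}|\mathcal A q_1|^2$, giving $d^\star d|q_1|^2 + (\tfrac{1}{\e^2} - \tfrac{C_K'}{\e\Lambda_K^2} - \tfrac{C_K'}{\Lambda_K^2})|\mathcal A q_1|^2 \leq 0$, and the parenthetical is $\geq \tfrac{1}{\e^2}$ is false — so in fact the honest statement is that for $\e$ small the coefficient stays positive and one gets \eqref{diffinequality3.2} possibly after replacing $\tfrac{1}{\e^2}$ by $\tfrac{1}{2\e^2}$; I would either adopt that convention or simply observe that shrinking $c$ later absorbs the loss. (I will present it as: drop the positive $|\nabla q_1|^2$, bound the errors, and conclude for small $\e$.)

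The main obstacle, and the step deserving the most care, is verifying that $D_\e$ genuinely preserves the splitting on $Y - \mathcal Z$ so that taking the $\mathfrak H$-component of $D_\e^\star D_\e \mathfrak q = 0$ is legitimate and produces the displayed scalar identity. This uses all three hypotheses in item 2) of Definition \ref{concentratingdiracdef}: $\nabla$ preserves the splitting (so $\nabla^\star\nabla$ is block-diagonal and $\nabla q_1$ means the $\mathfrak H$-connection applied to $q_1$), $\sigma_D$ preserves the splitting (so $D$ itself is block-diagonal on $Y - \mathcal Z$ and does not mix $q_0$ into the $\mathfrak H$-equation), and $\mathcal A$ is block-diagonal with $\ker = \mathfrak N$. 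One subtlety is that this block-diagonality of $D_\e^\star D_\e$ only holds away from $\mathcal Z$, but since $K \Subset Y - \mathcal Z$ all computations take place on a neighborhood of $K$ still contained in $Y - \mathcal Z$, so this is fine. A second subtlety is the precise form of $\mathfrak B$: from Lemma \ref{weitzenbockabstract}, $\mathfrak B = -\sigma_D(e^j)^\star(\nabla_j\mathcal A)$, which is bounded on $K$ because $\mathcal A$ is $C^1$ there (even though $\mathcal A$ need not be smooth globally, on $Y - \mathcal Z$ it is as regular as the problem allows, and only a local $C^0$-bound on $\nabla\mathcal A$ over $K$ is needed). Once these points are pinned down, the inequality follows immediately.
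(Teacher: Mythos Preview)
Your proposal is correct and follows essentially the same route as the paper: use that the splitting $\mathfrak N\oplus\mathfrak H$ is parallel and $\sigma_D$-invariant to conclude $(0,q_1)\in\ker(D_\e)$, apply the abstract Weitzenb\"ock formula $D_\e^\star D_\e=\nabla^\star\nabla+\mathcal R+\tfrac{1}{\e^2}\mathcal A^\star\mathcal A+\tfrac{1}{\e}\mathfrak B$ to $q_1$, pair with $q_1$, and absorb the bounded zeroth-order terms $\mathcal R,\mathfrak B$ into $\tfrac{1}{\e^2}|\mathcal A q_1|^2$ using the lower bound $|\mathcal A q_1|\geq\Lambda_K|q_1|$ on $K$. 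Your observation about the coefficient is apt: the paper's own proof in fact only yields $\tfrac{1}{2\e^2}|\mathcal A q_1|^2$, which is all that is used downstream.
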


\begin{proof}
For the $\e$-independent operator $D$, one has a Weitzenb\"ock formula \be D^\star D= \nabla ^\star \nabla + \mathcal R \label{noeweitzenbock}\ee where $\mathcal R$ is Clifford multiplication by a curvature term which is bounded in $C^0$ by a constant independent of $\e$. The fixed degeneracy assumption (\refeq{fixeddegeneracy}) implies that $(0,q_1)\in \ker(D_\e)$. In a slight abuse of notation, we denote this pair simply by $q_1$, so that 

\be  D_\e^\star  D_\e q_1=0.\label{parallelD}\ee
\noindent  Using (\refeq{noeweitzenbock}),  (\refeq{parallelD}), and Lemma \ref{weitzenbockabstract}, for $\e$ sufficiently small we have  
\bea -\frac{1}{2} d^\star d |q_1|^2  &=&  \br \nabla q_1 , \nabla q_1 \kt + \br q_1 , -\nabla^\star  \nabla q_1 \kt \\
&=& |\nabla q_1|^2 + \br q_1 , \mathcal Rq_1 \kt  + \br q_1 , -D^\star D q_1 \kt\\
&=& |\nabla q_1|^2 + \br q_1 , \mathcal Rq_1 \kt  + \tfrac{1}{\e^2}|\mathcal Aq_1|^2 +\tfrac{1}{\e} \br q_1 , \mathfrak B q_1 \kt\\
&\geq &  \tfrac{1}{2\e^2}|\mathcal Aq_1|^2. 
 \eea  
 In the last line, we have used the fact that $\mathcal R$ and $\mathfrak B$ are bounded independent of $\e$, so can be absorbed for sufficiently small $\e$ since the injectivity of $\mathcal A$ on $\frak H |_{Y-\mathcal Z_0}$ implies that $|\mathcal A q_1|> c|q_1|$ holds for some constant $c$ on a compact subset $K\Subset Y-\mathcal Z_0$.
\end{proof}

\bigskip 

 The next lemma is an abstract result about scalar functions satisfying differential inequalities of the form (\refeq{diffinequality3.2}). The statement uses the following notation: for each $y\in Y-\mathcal Z$, let $\Lambda(y)= \inf_{\|v\|=1} \|Av\|$ where $v\in \mathfrak H_y$. The fixed degeneracy hypothesis (\refeq{fixeddegeneracy}) ensures $A_\mathfrak H$ is invertible on $Y-\mathcal Z$, hence $\Lambda(y)>0$.  
 Then, for a fixed compact subset $K\Subset Y-\mathcal Z$ as in the statement of Theorem \ref{concentrationprinciple}, let  $\Lambda_K:=\inf_{y\in K}\Lambda(y)$ so that
$$|\mathcal A \frak q(y)|^2\geq \Lambda(y)^2 |q_1(y)|^2\geq \Lambda^2_K|q_1(y)|^2 $$ 

\noindent holds for $y\in K$. As in the statement of Theorem \ref{maina}, $R_K=\text{dist}(K,\mathcal Z)$. Additionally, in what follows, $\Delta_g =d^\star d$ denotes the positive definite Laplacian defined by the Riemannian metric $g$. 

\begin{lm}
Suppose that $u: K \to \R$ satisfies $u\geq0$ and \be \Delta_g u+  \frac{\Lambda(y)^2}{\e^2} u \leq 0. \label{diffinequality}\ee
Then there exist constants $C,c>0$ such that at a point $y_0\in K$ one has 

\be u(y_0)\leq \frac{C}{R_K^n}\text{Exp}\left( -\frac{\Lambda(y_0)c}{\e}R_K\right) \int_{K'}  |du|+ {|u|} \ dV\label{resultofgreensfunctionlem}\ee
\label{greensfunction}
\noindent where $K'$ is a compact set with $\overset{\circ}K\Subset K'\Subset Y-\mathcal Z$. 
\end{lm}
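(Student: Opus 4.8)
The plan is to prove the pointwise bound \eqref{resultofgreensfunctionlem} via a Green's function / sub-solution argument. The differential inequality \eqref{diffinequality} says that $u$ is a non-negative subsolution of the Schr\"odinger-type operator $\Delta_g + \Lambda(y)^2/\e^2$ on $K$. First I would fix the point $y_0 \in K$ and work on a geodesic ball, or better a chain of balls, joining a neighborhood of $y_0$ to points at distance $\sim R_K$; the key geometric input is that since $R_K = \mathrm{dist}(K,\mathcal Z)$, the ball $B_{R_K}(y_0)$ is contained in $Y - \mathcal Z$ (shrinking $R_K$ by a fixed factor if needed so that $B_{R_K}(y_0)\Subset K'$), so the potential $\Lambda(y)^2/\e^2$ is bounded below there by something comparable to $\Lambda(y_0)^2/\e^2$ up to a fixed multiplicative constant (using continuity of $\Lambda$ and compactness, and possibly at the cost of replacing $\Lambda(y_0)$ by $\Lambda_K$ or a fixed fraction thereof — this only affects the constant $c$).

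The core step is to construct an explicit supersolution (barrier) $w$ on the ball $B_{R}(y_0)$, $R \sim R_K$, of the form $w(y) = M\, \psi(\mathrm{dist}(y,y_0))$ where $\psi$ is built from the fundamental solution of the constant-coefficient operator $\Delta + \mu^2$ with $\mu \sim \Lambda(y_0)/\e$ on $\R^n$ — i.e.\ a modified Bessel function $r^{1-n/2}K_{n/2-1}(\mu r)$, whose large-argument asymptotics give the decay $r^{-(n-1)/2}e^{-\mu r}$, and which on a ball of radius $R$ is comparable to $R^{-n}e^{-\mu R}$ times $\mu^{?}$ factors. I would choose the constant $M$ so that $w \geq u$ on $\partial B_R(y_0)$; controlling $u$ on that sphere is where the $L^1$-norm $\int_{K'}(|du|+|u|)$ enters — one gets $\|u\|_{C^0(\partial B_R)}$, or an averaged version of it sufficient for the barrier comparison, from a mean-value / trace estimate on the annulus, again using the subsolution property and the fact that the annulus lies in $K'$. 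Then the maximum principle for $\Delta_g + \Lambda^2/\e^2$ (valid because the potential is $\geq 0$) applied to $w - u$ on $B_R(y_0)$ forces $u(y_0) \leq w(y_0)$, which is exactly \eqref{resultofgreensfunctionlem} after absorbing the Bessel-function constants and any polynomial-in-$\mu$ prefactors into $C$ and adjusting $c$ (since $1/\e \to \infty$, negative powers of $\mu$ are harmless and positive powers are dominated by $e^{c\mu R_K/2}$).

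The main obstacle will be handling the variable-coefficient Laplacian $\Delta_g$ and the non-constant potential $\Lambda(y)^2/\e^2$ when verifying that the radial barrier $w$ is actually a supersolution: on a curved manifold $\Delta_g \mathrm{dist}(\cdot,y_0)$ picks up bounded lower-order terms from the second fundamental form of geodesic spheres, and these must be absorbed into the $\e^{-2}$-scale potential term. This works for $\e$ small because the potential is $\sim \e^{-2}$ while the error terms are $O(\e^{-1})$ or $O(1)$ — the same mechanism as in Lemma \ref{weitzenbockabstract} — but one must be careful that the radius $R \sim R_K$ is not so large that the geodesic ball leaves the injectivity-radius regime; since $K'$ is a fixed compact set this is only an issue for large $R_K$, which can be dealt with by iterating the estimate along a finite chain of balls of controlled size (multiplying the exponential decay factors), or simply by noting the statement is only nontrivial for $R_K$ bounded by the geometry of $K'$. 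A secondary technical point is making the passage from $\|u\|_{C^0(\partial B_R)}$ to the $L^1$ integral quantitative with the stated $R_K^{-n}$ weight; this is a standard interior estimate for subsolutions (De Giorgi–Nash–Moser or a direct mean-value inequality), and I would invoke it in that form.
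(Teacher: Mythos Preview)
Your overall plan---localize to a ball $B_R(y_0)$ with $R\sim R_K$, freeze the potential near $\mu^2=\Lambda(y_0)^2/\e^2$, and exploit the exponential decay of the Yukawa fundamental solution---is the right one and matches the paper. The barrier step, however, fails as written: the fundamental solution $\psi(r)=r^{1-n/2}K_{n/2-1}(\mu r)$ is singular at $r=0$, so your comparison function $w(y)=M\psi(\mathrm{dist}(y,y_0))$ has $w(y_0)=+\infty$, and the maximum-principle conclusion $u(y_0)\le w(y_0)$ is vacuous. A barrier centered at the evaluation point can never bound the value there; the singularity of the fundamental solution is exactly what one wants, but it must be exploited through a \emph{representation formula}, not a one-sided comparison.

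This is what the paper does. Take $G(y,y_0)$ to be the Dirichlet Green's function of $-\Delta_g-M^2$ on a ball about $y_0$ with $M=\Lambda(y_0)/(\sqrt2\,\e)$, and let $\beta$ be a cutoff equal to $1$ on $B_{R_0/2}(y_0)$ and supported in $B_{R_0}(y_0)$. Green's identity applied to the pair $\psi=G$, $\eta=\beta u$ yields $u(y_0)=\int_{B_{R_0}}G\,(\Delta_g+M^2)(\beta u)\,dV_g$: the boundary terms vanish because of the cutoff, and since $(\Delta_g+M^2)u\le 0$ the only positive contribution is supported on the annulus $R_0/2\le r\le R_0$ where $d\beta\ne 0$, with integrand bounded by $R_0^{-1}|du|+R_0^{-2}|u|$. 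On that annulus the bound $G\le c\,r^{2-n}e^{-Mr/2}$---this is where your Bessel asymptotics enter, established on the curved background by comparing with the Euclidean Green's function for the \emph{smaller} mass $M/2$, so that the spare $\tfrac34 M^2$ in the potential absorbs the $O(r^2)\partial_r^2$ curvature errors---produces the exponential factor and the estimate follows. Note that the presence of $|du|$ rather than just $|u|$ in \eqref{resultofgreensfunctionlem} is itself a hint toward the cutoff mechanism; a genuine barrier argument would instead require a separate mean-value step to control $\sup_{\partial B_R}u$, which the representation formula avoids.
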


\begin{proof}

Recall first that the Green's function of $\Delta+{m^2}$ on $\R^n$ is 
\begin{equation}
G(x,x_0)= \frac{C(n)}{ |x-x_0|^{n-2}} \text{Exp}\left({-{m}|x-x_0|}\right)\label{RiemannianGreens}
\end{equation}
 where $C(n)$ is a constant depending only on the dimension. Let $K' \Subset Y-\mathcal Z$ be a compact set whose interior contains $K$, and take  $1>c_0>0$ to be a small number such that the following three conditions hold for $R_0= c_0R_K$: 
\begin{enumerate}
\item[(i)] $y\in K \Rightarrow B_{R_0} \subseteq K'$
\item[(ii)] $y\in B_{R_0}(y_0) \Rightarrow \Lambda(y)^2 \geq \tfrac{\Lambda_0^2}{2}$ where $\Lambda_0=\Lambda(y_0)$. 
\item[(iii)] The Green's function of $-\Delta_g-\tfrac{\Lambda_0^2}{2\e^2}$ on $B_{2R_0}$ with Dirichlet boundary conditions satisfies \bea G(y,y_0)&\leq& \frac{c_1}{|y-y_0|^{n-2}}\text{Exp}\left({-\frac{\Lambda_0}{2\e}|y-y_0|}\right)\eea

\noindent uniformly on $Y$ (resp. $X$) once $\e$ is sufficiently small (depending on $K$), and where $c_1$ depends only on the dimension.  
\end{enumerate}

\noindent The first of these is possible by the compactness of $K$, the second by the fact that $\mathcal A$ is $C^1$. The third follows by using the comparison principle on $B_{2R_0}$ comparing with the Green's function (\refeq{RiemannianGreens}) on Euclidean space. The details are provided in Appendix \ref{Greensappendix}.

Now let $y_0\in K$ and consider the ball $B_0= B_{R_0}(y_0)\subseteq K$, where $R_0$ is as above. Green's identity on $B_0$ for functions $\eta,\psi$ states that $$\int_{B_0} \eta(- \Delta_g \psi) - \psi(- \Delta_g \eta) \ dV_g= \int_{\del B_0} \eta \del_\nu \psi - \psi \del_\nu \eta  \ dS_g$$ 
and is derived by integrating the quantity $0=\int \br d\eta, d\psi \kt-\br d\psi, d\eta\kt$ by parts. Here, $dV_g, dS_g$ denote the volume forms arising from the Riemannian metric on the ball and the sphere respectively. Now set $$M:={\frac{\Lambda_0}{\sqrt{2}\e}}.$$ Adding and subtracting $M^2 \eta \psi$ on the left-hand side yields 

\be\int_{B_0} \eta(- \Delta_g -M^2) \psi + \psi( \Delta_g +M^2) \eta \ dV_g= \int_{\del B_0} \eta \del_\nu \psi - \psi \del_\nu \eta  \ dS_g.\label{GreensIdentity}\ee

Next, we let $\beta(y)$ denote a cutoff function equal to $1$ on $B_{R_0/2}(y_0)$ and supported in the interior of  $B_{0}$ satisfying \be 0\leq \beta \leq 1 \hspace{2cm} |\nabla \beta|\leq \frac{C}{R_0} \hspace{2cm} |\nabla^2\beta|\leq \frac{C}{R_0^2}\label{cutoffbounds}\ee and apply the identity \refeq{GreensIdentity} with $\eta=u(y)\beta(y)$ and $\psi= G(y,y_0)$. The boundary term on the right hand side vanishes by the choice of $\beta$. The first term becomes $-u(y_0)$ since, by definition, the Green's function satisfies $(\Delta_g + M^2)G=\delta_{y_0}$. Meanwhile for the second term, the assumption that $u$ satisfies(\refeq{diffinequality}) implies  \be(\Delta_g +M^2)u=\left (\Delta_g +\frac{\Lambda_0^2}{2\e^2}\right)u \leq \left(\Delta_g+ \frac{\Lambda(y)^2}{\e^2}\right)u\leq 0 \label{diffineq} \ee

\noindent hence
$$(\Delta_g +M^2) \beta u= \beta (\Delta_g +M^2)u - 2 \br d\beta, du\kt+ (\Delta_g \beta)u \leq C\left(\frac{1}{R_0}|du| + \frac{1}{R_0^2}|u|\right) \chi_{A}  $$

\noindent where $\chi_{A}$ is the characteristic function equal to 1 on the outer annulus $A=\{R_0/2\leq r \leq R_0\}$ and vanishing elsewhere. 

Thus the identity (\refeq{GreensIdentity}) becomes the inequality
\bea
u(y_0)&\leq&C  \int_{A_{}} G(y,y_0)\left(\frac{1}{R_0}|du| + \frac{1}{R_0^2}|u|\right)  \ dV_g\\
& \leq & \frac{C}{R^{n-1}_0}\text{Exp}\left(-\frac{\Lambda_0}{8\e} R_0\right) \int_{A_{}} |du|+ \frac{|u|}{R_0}. \\
\eea
Substituting the definition $R_0= c_0 R_K$ yields the bound (\refeq{resultofgreensfunctionlem}). 
\end{proof}

\begin{proof}[Proof of Theorem \ref{concentrationprinciple}] Apply Lemma \ref{greensfunction} to $u=|q_1|^2$. Kato's inequality $d|q| \leq |\nabla q|$ shows that $$d|q_1|^2 \leq 2 |q_1| d|q_1| \leq 2|q_1| |\nabla q_1| \leq |q_1|^2 + |\nabla q_1|^2.$$ 

\noindent Applying this to the right side of (\refeq{resultofgreensfunctionlem}) and setting $R_K=c_0R_0$ yields
\begin{eqnarray} |q_1(y_0)|^2 &\leq &  \frac{C}{ c_0^n R_K^n } \text{Exp}\left(-\frac{\Lambda_0c_0}{\e} R_K\right) \|q_1\|^2_{L^{1,2}(K')}.  \label{rrrgh}\end{eqnarray}
 
 \noindent Taking the square root and decreasing constants by a factor of $2$ to replace $\Lambda_0$ by $\Lambda_K$ shows the desired estimate (\refeq{expdecay}).

\end{proof}

\bigskip 

\section{Non-Linear Concentrating Dirac Equations}
\label{section3}
This section extends the proof of Theorem \ref{concentrationprinciple} to the case of the non-linear equation (\refeq{nonlinearDirac}) of Corollary \ref{cornonlinear}.

 Let us clarify the subtlety in deducing the estimate (\refeq{expdecay}) for a non-linear Dirac equation. Equation (\refeq{nonlinearDirac}) can be written as the solution of a concentrating Dirac operator as follows: let  $A_\e(q_1)=\e Q_1(\frak q)q_1$ with $Q_1$ as in the statement of Corollary \ref{cornonlinear}. Then 
 
  \begin{eqnarray} D_\e\frak q+Q(\frak q)&=& 0 \\ \left( D+\tfrac{1}{\e}\mathcal A + Q_1(\frak q)\right) \frak q&=&0  \label{nonlinearQ11}\\
 \left( D+\tfrac{1}{\e}\mathcal (A_\mathfrak H + A_\e(\frak q))\right)\frak q &=& 0  \label{nonlinearQ12}\end{eqnarray}
i.e. $\frak q$ is the solves a concentrating Dirac equation, but the zeroth order perturbation now depends on $\frak q$. In this case, the proof  of Theorem \ref{concentrationprinciple} fails in general because it is no longer clear we can absorb the $\mathfrak B=\nabla \mathcal A$ term. Indeed, this would require a $C^1$-bound on $A_\e$. The naive bootstrapping, however, leads only to bounds with powers of $\e^{-1}$ larger than can be absorbed (see also Section \ref{section5}). The proof of Corollary \ref{cornonlinear} relies on stronger differential inequalities obtained by not discarding the $|\nabla \frak q_1|^2$ term in Lemma \ref{diffinequality3.2}.

Corollary \ref{cornonlinear} is deduced from the following proposition by setting $A_\e(-)=\e Q_1(\frak q,-)$. The additional assumption on $\e Q_1$ now manifests as the requirement that the concentration property $A_\e^\star \sigma_D-\sigma_D^\star A_\e$ is satisfied for this term as well.

   \begin{prop}\label{perturbationcase} The conclusion (\refeq{expdecay}) of Theorem \ref{concentrationprinciple} continues to hold if the zeroth order term is given by 
\be\mathcal A=\begin{pmatrix}
0 & 0 \\ 0 & A_{\mathfrak H}\end{pmatrix}  + A_\e .\label{fixeddegeneracy2}\ee

\noindent where $A_{\mathfrak H}$ is smooth with all derivatives bounded independent of $\e$ on $K$ and  $\frak q, A_\e$ satisfy the relation 
 $A_\e^\star \sigma_D(\xi)=\sigma_D^\star(\xi) A_\e=$ for $\xi \in T^\star Y$ and 

$$\|A_\e\|_{L^{1,n}(K')}\to 0 \hspace{2cm} \|A_\e\|_{C^0(K')}\to 0. $$

\noindent More generally, the same conclusion holds for the inhomogeneous version of equation $(\refeq{nonlinearQ12})$, 

$$ \left( D+\tfrac{1}{\e}\mathcal (A_\mathfrak H + A_\e(\frak q))\right)\frak q =f$$
 where $f$  satisfies $\pi_{\frak H}(f)=0$. 
\end{prop}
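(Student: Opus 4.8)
The plan is to mimic the proof of Theorem~\ref{concentrationprinciple}, but to retain the $|\nabla q_1|^2$ term that was discarded in Lemma~\ref{diffinequality3.2} and use it to absorb the contribution of the extra perturbation $A_\e$. First I would derive the analogue of the Weitzenb\"ock identity of Lemma~\ref{weitzenbockabstract} for the operator $D + \tfrac1\e(A_\mathfrak H + A_\e)$. Since by hypothesis both $A_\mathfrak H$ (extended by zero) and $A_\e$ satisfy the concentration property $(\cdot)^\star \sigma_D(\xi) = \sigma_D(\xi)^\star(\cdot)$, the first-order cross terms again cancel, and one gets $D_\e^\star D_\e = D^\star D + \tfrac1{\e^2}(A_\mathfrak H + A_\e)^\star(A_\mathfrak H+A_\e) + \tfrac1\e \mathfrak B_\e$, where now $\mathfrak B_\e = -\sigma_D(e^j)^\star \nabla_j(A_\mathfrak H + A_\e)$. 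The point is that $\mathfrak B_\e$ contains $\nabla A_\e$, which is \emph{not} bounded in $C^0$ independently of $\e$; this is exactly the subtlety flagged in the text, and handling it is the main obstacle.

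Next I would run the pointwise computation as in Lemma~\ref{diffinequality3.2}, but keep everything: using $(0,q_1)\in\ker D_\e$, the $D$-Weitzenb\"ock $D^\star D = \nabla^\star\nabla + \mathcal R$, and the new identity, one obtains
\begin{equation*}
-\tfrac12 d^\star d|q_1|^2 = |\nabla q_1|^2 + \langle q_1, \mathcal R q_1\rangle + \tfrac1{\e^2}|(A_\mathfrak H + A_\e)q_1|^2 - \tfrac1\e \langle q_1, \sigma_D(e^j)^\star \nabla_j(A_\mathfrak H + A_\e) q_1\rangle.
\end{equation*}
The $A_\mathfrak H$ part of the last term is absorbed as before (it is $C^1$-bounded uniformly). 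For the $A_\e$ part, integrate by parts \emph{in the cross term rather than bounding it pointwise}: move the derivative off $A_\e$ onto $q_1$ and $\sigma_D(e^j)^\star$ at the level of the integrated differential inequality. This converts $\tfrac1\e \nabla A_\e \cdot q_1^2$-type contributions into terms of the schematic form $\tfrac1\e A_\e \cdot (\nabla q_1)\cdot q_1$ plus $\tfrac1\e A_\e \cdot q_1^2 \cdot(\text{connection coefficients})$. The first is controlled by $\tfrac{\delta}{\e^2}|A_\e|^2 |\nabla q_1|^2 + \tfrac1\delta |q_1|^2$ type splitting — but this still carries an $\e^{-2}$, so instead one uses Young's inequality as $\tfrac1\e |A_\e||\nabla q_1||q_1| \le \tfrac12|\nabla q_1|^2 + \tfrac1{2\e^2}|A_\e|^2|q_1|^2$, and crucially $\tfrac1{\e^2}|A_\e|^2 |q_1|^2$ is dominated by $\tfrac1{\e^2}|(A_\mathfrak H + A_\e) q_1|^2 \geq \tfrac1{2\e^2}\Lambda^2 |q_1|^2$ for $\e$ small since $\|A_\e\|_{C^0(K')}\to 0$ while $A_\mathfrak H$ is uniformly invertible on $K'$. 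The $\tfrac12|\nabla q_1|^2$ is absorbed by the retained gradient term. This is precisely where not discarding $|\nabla q_1|^2$ pays off.

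The delicate point is that integration by parts of the cross term requires a cutoff and produces terms involving $\nabla A_\e$ only through the combination $\int \langle \text{(stuff)}, \nabla_j A_\e \, q_1\rangle$, which after one more integration by parts becomes $\int A_\e \cdot \nabla(\text{stuff})$; here the $L^{1,n}$ hypothesis on $A_\e$ enters: one estimates $\int_{K'} |A_\e| |\nabla q_1|^2 \leq \|A_\e\|_{L^n(K')}\|\nabla q_1\|_{L^{2n/(n-1)}}^2$ and similar, and uses $\|A_\e\|_{L^{1,n}(K')}\to 0$ together with a local elliptic estimate $\|\nabla q_1\|_{L^{2n/(n-1)}(K')} \lesssim \|q_1\|_{L^{1,2}(K'')}$ (from $D_\e$-ellipticity, with constant possibly $\e$-dependent but harmlessly so after multiplying by $\e\to 0$) to push these correction terms below the absorbable threshold. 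Once the differential inequality $\Delta_g |q_1|^2 + \tfrac{\Lambda(y)^2}{2\e^2}|q_1|^2 \le 0$ is re-established on $K'$ (now as a weak/integrated inequality, which is all Lemma~\ref{greensfunction}'s Green's-function argument needs — the comparison argument there only uses testing against the Green's function), I would apply Lemma~\ref{greensfunction} verbatim to conclude \refeq{expdecay}. For the inhomogeneous version with $\pi_\mathfrak H(f) = 0$: since $f$ lies in $\mathfrak N = \ker(\mathcal A)$, the pair $(0,q_1)$ still satisfies $D_\e q_1$ has vanishing $\mathfrak H$-component contribution to the relevant identity — more precisely $\langle \sigma_D \nabla q_1, \cdot\rangle$ pairs against $f$ trivially in the $\mathfrak H$-block because the splitting is $\sigma_D$- and $\nabla$-parallel — so the same differential inequality for $|q_1|^2$ holds and the argument goes through unchanged.

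I expect the main obstacle to be the bookkeeping in the integration-by-parts step: making sure that every occurrence of $\nabla A_\e$ is either absorbed by the retained $|\nabla q_1|^2$ (via Young with the right weights) or moved onto a factor that is controlled by the $L^{1,n}$-smallness, without generating an uncontrolled power of $\e^{-1}$ — this is the whole content of why the hypothesis \refeq{quadraticestimates} is stated with exactly those norms, and getting the constants to close requires care about which terms carry $\e^{-2}$ versus $\e^{-1}$ and verifying the $\e^{-2}|A_\e|^2|q_1|^2$ term is genuinely swallowed by $\e^{-2}|(A_\mathfrak H+A_\e)q_1|^2$.
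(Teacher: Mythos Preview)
Your overall strategy is right---retain the $|\nabla q_1|^2$ term, use the $C^0$ smallness of $A_\e$ to recover $\tfrac{1}{2\e^2}|A_{\mathfrak H}q_1|^2$ from $\tfrac{1}{\e^2}|(A_{\mathfrak H}+A_\e)q_1|^2$, and observe that the inhomogeneous case with $\pi_{\mathfrak H}(f)=0$ drops out for free---and this matches the paper. But your handling of the bad term $\tfrac{1}{\e}\langle q_1,(\nabla A_\e)q_1\rangle$ diverges from the paper's and has a gap.

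You propose to integrate by parts to move $\nabla$ off $A_\e$. At the pointwise level this is meaningless; at the integrated level (after testing against $G\beta$) it lands a derivative on the Green's function, producing terms like $\tfrac{1}{\e}\int(\nabla G)\beta\, A_\e\, q_1^2$. Since $|\nabla G|$ behaves like $(\tfrac{M}{\e}+\tfrac{1}{r})G$ near $y_0$, the $\tfrac{1}{r}G$ piece is strictly more singular than anything on the absorbing side, and you give no mechanism to control it. Your subsequent appeal to an elliptic estimate ``with constant possibly $\e$-dependent but harmlessly so'' is exactly the reasoning the paper warns against in the opening of Section~\ref{section5}: naive bootstrapping accumulates uncontrolled powers of $\e^{-1}$.

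The paper's route is different and avoids touching $\nabla G$. It keeps the term $I=\tfrac{1}{\e}\int G\beta\,|\langle q_1,(\nabla A_\e)q_1\rangle|$ intact and absorbs it into the retained terms $\int G\beta\,(|\nabla q_1|^2+\tfrac{\Lambda_0^2}{8\e^2}|q_1|^2)$ via a separate lemma. The mechanism is: (i) a dyadic decomposition of $B_0$ into annular sectors $A_{n\ell}$ of size $\min(r_n/5,\e/\Lambda_0)$, chosen so that the Harnack inequality for $G$ holds on each sector with a \emph{uniform} constant (this is where the scaling of the Green's function equation enters); (ii) a weighted Gagliardo--Nirenberg interpolation on each $A_{n\ell}$ with weight $G^{1/2}$, justified by Harnack; (iii) on each sector, H\"older's inequality with $\|\nabla A_\e\|_{L^n}$ against a weighted $L^{2n/(n-1)}$ norm of $q_1$, then interpolation between the weighted $L^2$ norms of $q_1$ and $\nabla q_1$; (iv) Young's inequality with parameter $\e$ to split the product, after which the prefactor $\|A_\e\|_{L^{1,n}}\to 0$ closes the absorption. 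The $L^{1,n}$ hypothesis enters precisely here---as the H\"older dual controlling $\nabla A_\e$ directly---not through any integration by parts.

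So: keep your differential inequality with the $|\nabla q_1|^2$ retained, but replace the integration-by-parts step with the dyadic/Harnack/interpolation argument. That is the missing idea.
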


\begin{proof}

Retaining the notation of Lemma \ref{greensfunction}, let $\Lambda_K=\inf_{y\in K}\Lambda(y_0)$ where $\Lambda(y)= \inf_{\|v\|=1} \|Av\|$ for $v\in \mathfrak H_y$ be as before (thus it has no dependence on $A_\e$). Additionally, write $$\mathfrak B= \nabla A_{\mathfrak H} + \nabla A_\e$$

\noindent where $\nabla$ is shorthand for the operator $\sigma_D(e^j)^\star \nabla_j $ appearing in the proof of Lemma \ref{weitzenbockabstract}. 

Proceeding as in the proof of Lemma \refeq{diffinequality3.2}, we have the following differential inequality. Note here that the inner product with $q_1$ and the assumption that $\pi_\frak H(f)=0$ imply that the cases of $f=0$ and $f\neq 0$ yield the same inequality, and the two cases therefore coincide for the remainder of the proof.  

\bea -\frac{1}{2} d^\star d |q_1|^2 
&=& |\nabla q_1|^2 + \br q_1 , \mathcal Rq_1 \kt  + \tfrac{1}{\e^2}|\mathcal Aq_1|^2 +\tfrac{1}{\e} \br q_1 , \mathfrak B q_1 \kt\\
&\geq & |\nabla q_1|^2 + \br q_1 , \mathcal Rq_1 \kt  + \tfrac{1}{\e^2}|(A_{\mathfrak H}+ A_\e)q_1|^2 +\tfrac{1}{\e} \br q_1 , (\nabla A_{\mathfrak H} +\nabla A_\e)q_1 \kt \\ 
&\geq & |\nabla q_1|^2 + \br q_1 , \mathcal Rq_1 \kt  + \tfrac{1}{\e^2}|(A_{\mathfrak H}+ A_\e)q_1|^2 +\tfrac{1}{\e} \br q_1 , (\nabla A_{\mathfrak H} + \nabla A_\e)q_1 \kt \\ 
&\geq & |\nabla q_1|^2 + \tfrac{1}{2\e^2}|A_{\mathfrak H}q_1|^2 +\tfrac{1}{\e}\br q_1 , (\nabla A_\e) q_1\kt  
 \eea
 
 \noindent where in addition to the absorptions done before, we have used that $\|A_\e\|_{C^0(K')}\to 0$ which implies that \be |A_{\e} q_1|^2 \leq \frac{\Lambda_K^2}{4} |q_1|^2 \leq \frac{1}{4}|A_{\mathfrak H} q_1|^2\label{absorbC0}\ee
 \noindent once $\e$ is sufficiently small. Rearranging shows the differential inequality now dictates that 
 
  \be\Delta |q_1|^2 + \frac{|A_{\mathfrak H} q_1|^2}{\e^2} \leq -2 |\nabla q_1|^2 - \tfrac{2}{\e}\br q_1,( \nabla A_\e) q_1\kt \label{improveddifferential}\ee
  
  \noindent on $K'$. 
  
 Proceeding as before (but with an additional factor of $\sqrt{2}$), set $M:={\frac{\Lambda_0}{2e}}$. Taking $u=|q_1|^2$, in this case (\refeq{GreensIdentity}) becomes

  \begin{eqnarray}
u(y_0)&=&- \int_{B_0} G(y,y_0) (-\Delta_g - M^2) (\beta u) \ dV_g  \nonumber  \\
  &\leq  &  \int_{B_0} G(y,y_0) \beta (\Delta_g + M^2)u) \ dV_g  +  C  \int_{B_{0}} G(y,y_0)\left(\frac{1}{R_0}|du| + \frac{1}{R_0^2}|u|\right)  \ dV_g   \nonumber 
  \end{eqnarray}
  
  \noindent and combining this with \refeq{improveddifferential} yields

  \begin{eqnarray}\hspace{-1cm} |q_1|^2(y)  & +&   \int_{B_0} G(y, y_0)\beta\left(  |\nabla q_1|^2 + \frac{\Lambda_0}{8\e^2}|q_1|^2 \right) \ dV_g  \label{3.3}\\  &\leq & C  \int_{B_{0}} G(y,y_0)\left(\frac{1}{R_0}|d|q_1|^2| + \frac{1}{R_0^2}|q_1|^2\right)  \ dV_g +\frac{1}{\e} \int_{B_0}  G(y_0, y)\beta |\br q_1 , (\nabla A_\e) q_1\kt| dV_g.\label{tointerpolate} \end{eqnarray}
  
    \medskip

  \medskip Using Lemma \ref{interpolationclaim} below, to absorb the second term on the right in (\refeq{tointerpolate}) into the two remaining terms of (\refeq{3.3})--(\refeq{tointerpolate}) leads to
  \begin{eqnarray} |q_1|^2(y) &\leq & 2C  \int_{B_{0}} G(y,y_0)\left(\frac{1}{R_0}|d|q_1|^2| + \frac{1}{R_0^2}|q_1|^2\right)  \ dV_g \end{eqnarray}
 after which the result follows identically to Theorem \ref{concentrationprinciple} by repeating the argument leading to (\refeq{rrrgh}).
\end{proof}

 \begin{lm}\label{interpolationclaim}
  For $\e$ sufficiently small, the integral $$I=  \frac{1}{\e} \int_{B_0}  G(y_0, y)\beta |\br q_1 , (\nabla A_\e) q_1\kt| dV_g $$
  
 \noindent  satisfies 
  \begin{eqnarray}
 I & \leq  & \int_{B_0} G(y, y_0)\beta\left(  |\nabla q_1|^2 + \frac{\Lambda_0}{8\e^2}|q_1|^2 \right)  dV_g  + C  \int_{B_{0}} G(y,y_0)\left(\frac{1}{R_0}|d|q_1|^2| + \frac{1}{R_0^2}|q_1|^2\right)  dV_g \label{tointerpolate2} 
  \end{eqnarray}
  
  \end{lm}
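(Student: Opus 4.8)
The plan is to bound $I$ by a careful application of H\"older's inequality, extracting from the factor $\nabla A_\e$ a small constant using the two hypotheses $\|A_\e\|_{L^{1,n}(K')}\to 0$ and $\|A_\e\|_{C^0(K')}\to 0$, and then absorbing the resulting pieces into the good terms $\int G\beta(|\nabla q_1|^2 + \tfrac{\Lambda_0}{8\e^2}|q_1|^2)$ and the cutoff error $\int G(R_0^{-1}|d|q_1|^2| + R_0^{-2}|q_1|^2)$. First I would write $|\langle q_1,(\nabla A_\e)q_1\rangle| \leq |\nabla A_\e|\,|q_1|^2$ pointwise. The integrand in $I$ is then $\tfrac1\e G\beta\,|\nabla A_\e|\,|q_1|^2$. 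The key idea is to split $|q_1|^2 = |q_1|^{2\theta}\cdot|q_1|^{2(1-\theta)}$ and pair one factor with $|\nabla q_1|$ (via $d|q_1|^2 \le 2|q_1||\nabla q_1|$, so that $|q_1||\nabla q_1|$ is controlled by $|\nabla q_1|^2 + |q_1|^2$ after Cauchy--Schwarz with a weight, which is exactly the structure of the good terms) — in effect, we want to dominate $\tfrac1\e|\nabla A_\e|\,|q_1|^2$ weighted by $G\beta$ by $\delta(\e)$ times $(|\nabla q_1|^2 + \e^{-2}|q_1|^2)$ weighted by $G\beta$, where $\delta(\e)\to 0$.

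The cleanest route is a Young-type inequality on the product: for any $a>0$,
\[
\frac1\e\,|\nabla A_\e|\,|q_1|^2 \;\le\; \frac{a}{\e^2}|q_1|^2\,|\nabla A_\e| \;+\; \frac{1}{4a}|q_1|^2\,|\nabla A_\e|,
\]
which isn't yet enough because $|\nabla A_\e|$ is not pointwise small. Instead I would integrate first and use H\"older with the triple of exponents $(2, n, \tfrac{2n}{n-2})$ on the ball $B_0\subseteq Y$ of radius $R_0$: writing $\tfrac1\e G\beta|\nabla A_\e||q_1|^2 = \bigl(\tfrac1\e|q_1|\,|\nabla q_1|^{0}\cdots\bigr)$ — more concretely, bound
\[
I \;\le\; \frac{1}{\e}\,\bigl\|\,G^{1/2}\beta^{1/2}|q_1|\,\bigr\|_{L^2(B_0)}\cdot\bigl\|\nabla A_\e\bigr\|_{L^n(B_0)}\cdot\bigl\|\,G^{1/2}\beta^{1/2}|q_1|\,\bigr\|_{L^{2n/(n-2)}(B_0)},
\]
then control the last factor by the Sobolev inequality in terms of $\|\nabla(G^{1/2}\beta^{1/2}|q_1|)\|_{L^2}$, which expands via Leibniz into $\int G\beta|\nabla q_1|^2$, the cutoff-gradient terms $\int(|\nabla\beta|^2 + |\nabla\beta|\beta\,|\nabla G|/G\cdots)|q_1|^2$ matching $R_0^{-2}\int|q_1|^2$ after using the Green's function bounds of condition (iii), and a $\int \tfrac{|\nabla G|^2}{G}\beta|q_1|^2$ term. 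Combining, one gets $I \le \e^{-1}\|\nabla A_\e\|_{L^n}\cdot\bigl(\text{good terms}\bigr)$; absorbing the $\e^{-1}$ into the $\e^{-2}|q_1|^2$ weight in the "good terms" gives the overall factor $\e\cdot\e^{-1}\|\nabla A_\e\|_{L^n} = \|A_\e\|_{L^{1,n}}\to 0$, and for $\e$ small this makes the constant in front of $I$ at most $1$, yielding \eqref{tointerpolate2}. The role of $\|A_\e\|_{C^0}\to 0$ is to control the borderline term $\int\tfrac{|\nabla G|^2}{G}\beta|q_1|^2$ or to convert between $\int G\beta\,|\nabla A_\e|\,|q_1|^2$ pieces where one $|\nabla A_\e|$ must instead be estimated after an integration by parts moving the derivative off $A_\e$ and onto $G\beta|q_1|^2$.

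The main obstacle I anticipate is the bookkeeping around the Green's function weight $G(y,y_0)$: unlike a clean integration on a fixed domain, here all integrals carry the singular weight $G$ (which blows up like $|y-y_0|^{2-n}$ at $y_0$), so the Sobolev inequality must be applied to the weighted quantity $G^{1/2}\beta^{1/2}|q_1|$ and one must check that $\int \tfrac{|\nabla G|^2}{G}\beta|q_1|^2$ and the cross term $\int \nabla G\cdot\nabla(\beta|q_1|^2)$ are both absorbable. The cleanest fix is to integrate by parts in $I$ to move $\nabla$ off $A_\e$ entirely: $I \le \e^{-1}\int |A_\e|\,\bigl|\nabla(G\beta|q_1|^2)\bigr| + \e^{-1}\int|A_\e|\,G\beta\,|q_1|\,|\nabla q_1|\,dV_g$, then use $\|A_\e\|_{C^0}\to 0$ on the first integral (paired against $|\nabla G|$, which after condition (iii) integrates against $G$-type kernels to give the $R_0^{-1}|d|q_1|^2|$ and $R_0^{-2}|q_1|^2$ terms) and $\|A_\e\|_{L^n}$ with H\"older and Sobolev on the second — this avoids ever differentiating the Green's function twice and keeps every term manifestly one of the three allowed types. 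With this reorganization the estimate \eqref{tointerpolate2} follows for $\e$ small enough that both $\|A_\e\|_{C^0(K')}$ and $\|A_\e\|_{L^{1,n}(K')}$ are below a threshold determined by the Sobolev constant on $K'$ and the geometry of $B_0$.
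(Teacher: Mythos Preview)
Your overall strategy---H\"older to extract $\|\nabla A_\e\|_{L^n}$, then Sobolev/interpolation on the remaining factor, then absorb using smallness of $\|A_\e\|$---is the right shape, and is close to what the paper does. But there is a genuine gap in how you handle the Green's function weight near the pole $y_0$, and neither of your two routes closes as written.

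In your first route you apply Sobolev to $G^{1/2}\beta^{1/2}|q_1|$ and expand $\nabla(G^{1/2}\beta^{1/2}|q_1|)$ by Leibniz. This produces the term $\int_{B_0}\tfrac{|\nabla G|^2}{4G}\beta|q_1|^2$. Near $y_0$ one has $G\sim r^{2-n}$, hence $\tfrac{|\nabla G|^2}{G}\sim r^{-n}$, and integrating against the volume element $r^{n-1}\,dr$ gives a logarithmic divergence. This term is therefore not finite, let alone absorbable, regardless of how small $\|A_\e\|$ is. Your second route (integrate by parts to move $\nabla$ off $A_\e$) produces $\tfrac{1}{\e}\int|A_\e|\,|\nabla G|\,\beta|q_1|^2$. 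Even granting the expected bound $|\nabla G|\le (C/r+M)G$ with $M=\Lambda_0/(2\e)$, the $C/r$ piece yields $\tfrac{C\|A_\e\|_{C^0}}{\e}\int_{B_0}\tfrac{G}{r}\beta|q_1|^2$, which cannot be absorbed into any of the four allowed terms on the right of (\ref{tointerpolate2}) without a Hardy-type inequality---and Hardy applied to $G^{1/2}\beta^{1/2}|q_1|$ brings back the same log-divergent $\tfrac{|\nabla G|^2}{G}$ term. (Also note: condition (iii) in the paper bounds $G$ pointwise, not $|\nabla G|$; and the $R_0^{-1}$, $R_0^{-2}$ terms on the right of (\ref{tointerpolate2}) originate from the cutoff $\beta$, not from the Green's function.)

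The paper avoids differentiating $G$ altogether. It decomposes $B_0$ into a dyadic family of annular sectors $A_{n\ell}$ whose diameters are at most $\min\{r_n/5,\,1/M\}$. On each such sector $G$ satisfies $(\Delta_g+M^2)G=0$ and is positive, so the Harnack inequality gives $\sup_{A_{n\ell}}G\le C\inf_{A_{n\ell}}G$ with a constant \emph{uniform in $n,\ell,M$} (the scaling $M\cdot\mathrm{diam}\le 1$ is exactly what makes the Harnack constant uniform). This lets one pull $G$ out of every integral on $A_{n\ell}$ and apply the \emph{unweighted} Gagliardo--Nirenberg interpolation (at $\alpha=\tfrac12$, giving $p=\tfrac{2n}{n-1}$ rather than the Sobolev endpoint $\tfrac{2n}{n-2}$) on each sector separately. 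Summing over sectors then gives (\ref{tointerpolate2}) with no derivative ever landing on $G$. This localization-plus-Harnack step is the missing idea in your argument.
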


\begin{proof}[Proof of Lemma \ref{interpolationclaim}] This follows from a weighted interpolation inequality and a dyadic decomposition. Let $$A_n=\big\{ |y| \in [r_{n+1},r_n]\big\}$$
for $n\geq 0$ be a sequence of disjoint annuli covering $B_0$ where the radii $r_n$ are defined inductively by  
\bea
r_0&=&R_0\\
r_{n+1}&=& r_n - \min \left\{{\tfrac{r_n}{5}, \tfrac{1}{M}}\right\}.
\eea
Next, subdivide each $A_n$ into a disjoint union of sectors $A_{n\ell}$ such that $\text{diam}(A_{n\ell})\leq |r_{n+1}-r_n|$. It is now enough to prove (\refeq{tointerpolate2}) separately for each of the disjoint sectors $A_{n\ell}$.

  On these sectors, we have the following weighted interpolation inequality: 
  If $p$ satisfies 
  
  \be \frac{1}{p}= \frac{j}{n}+ \alpha\left(\frac{1}{r}-\frac{m}{n}\right)+ \frac{1-\alpha}{s}. \label{**}\ee
  then the following inequality holds uniformly over the collection $A_{n\ell}$ and uniformly in $M$ (on which $G=G(y,y_0)$ depends) . 
\begin{eqnarray}  \left( \int_{A_{n\ell}}G^{p/2} |\nabla^j v|^{p} \ dV \right)^{1/p}&\leq&  C \left( \int_{A_{n\ell}}G^{r/2}|\nabla^m v|^r \ dV \right)^{\alpha/r} \left( \int_{A_{n\ell}}G^{s/2}|v|^s \ dV \right)^{(1-\alpha)/s} \label{AnlinterpolationI} \nonumber \\ & & + C \left( \int_{A_{n\ell}}G| v|^2 \ dV \right)^{1/2}.\label{AnlinterpolationII}  \end{eqnarray}

  To prove (\refeq{AnlinterpolationII}), notice that without the Green's function weight (ie setting $G=1$) this follows from the scale invariance of the standard interpolation inequalities (the $L^2$-term gets comparatively stronger on smaller balls). The version including $G$ follows from this and invoking the Harnack inequality (\cite{GilbargTrudinger}, Theorem 8.20)  on each $A_{n\ell}$ which shows $$\sup_{A_{n\ell}} |G(y_0,y)|  \leq C \inf_{A_{n\ell}} |G(y_0,y)|.$$
  
  \noindent Indeed, by construction, each $A_{n\ell}$ is contained in a ball of radius $R_n$ such that the ball of radius $4R_n\subseteq B_{R_0}-\{y_0\}$. Since $G(y_0,y)\geq 0$ by the maximum principle and satisfies $(\Delta_g + M^2) G(y_0,y)=0$ on $B_{R_0}-\bigcup_{k\geq n+3} A_k$, the Harnack inequality  (\cite{GilbargTrudinger}, Theorem 8.20) applies. Moreover, by the scaling of the constant (see \cite{GilbargTrudinger} Theorem 8.20 and the subsequent comments), the constant can be taken to be uniform since $r_n\leq \tfrac{1}{M}$ by construction (so that in the notation of 8.20 one has $\nu R=O(1)$).
  
  \bigskip

  We conclude the lemma using (\refeq{AnlinterpolationII}). Without disrupting the bounds (\refeq{cutoffbounds}) we may assume that $\beta=\chi^2$ is the square of another smooth cut-off function satisfying the same bounds up to universal constants.  In the case of dimension 4, applying H\"older's inequality with with $q^*=4$ and $p^*=4/3$ yields  
  \begin{eqnarray}
  \frac{1}{\e} \int_{A_{n\ell}}  G(y_0, y)\beta |\br q_1 , (\nabla A_\e) q_1\kt| dV_g & \leq& \frac{1}{\e} \|A_\e\|_{L^{1,4}(B_0)}   \left( \int_{A_{n\ell}}G^{p/2} |  q_1 \sqrt{\beta}|^{p} \ dV \right)^{2/p}\label{inequality1}
  \end{eqnarray}
  where $p=8/3$. Next, we apply the interpolation inequality (\refeq{AnlinterpolationII}) to $q_1 \sqrt{\beta}$ with $r=s=2$, and $j=0$ and $m=1$ in which case (\refeq{**}) shows $\alpha=\frac{1}{2}$
hence (\refeq{inequality1}) is bounded by 

\bea
&\leq & \frac{C}{\e}\|A_\e\|_{L^{1,4}(B_0)}  \left[\left( \int_{A_{n\ell}}G|\nabla (\chi q_1)|^2 \ dV \right)^{1/2} \left( \int_{A_{n\ell}}G|\chi q_1|^2 \ dV \right)^{1/2}  +C\| G^{1/2} \chi q_1\|^2_{L^2(A_{n\ell})}\right]\\
 &\leq & \frac{C}{\e}\|A_\e\|_{L^{1,4}(B_0)}  \left( \e\frac{\|G^{1/2}\nabla (\chi q_1)\|^2_{L^2(A_{n\ell})}}{2} +  \frac{\|G^{1/2} \chi q_1\|^2_{L^2(A_{n\ell})}}{2\e}  +\| G^{1/2} \chi q_1\|^2_{L^2(A_{n\ell})}\right) \\
 &\leq & {C}\|A_\e\|_{L^{1,4}(B_0)}  \left( \int_{A_{n\ell}} G(y, y_0)\beta\left(  |\nabla q_1|^2 + \frac{1}{2\e^2}|q_1|^2 \right) \ dV   +\int_{A_{n\ell}} G(y_0, y)\frac{1}{R_0^2}|q_1|^2\ dV \right) 
\eea

\noindent 
where we have used the bounds  (\refeq{cutoffbounds}) on $d\chi$ to obtain the last term. The assumption that $\|A_\e\|_{L^{1,4}}\to 0$ allows us to choose $\e$ sufficiently small that $\left(C+\tfrac{C\Lambda_0}{8}\right)\|A_\e\|_{L^{1,4}}\leq 1$ giving (\refeq{tointerpolate2}). The case of dimension $n\neq 4$ differs only in the arithmetic choices of exponents in H\"older's inequality.

  \end{proof}

\section{Generalized Seiberg-Witten Equations}
\label{section4}

This section introduces generalized Seiberg-Witten equations; the subsequent section shows that in the relevant cases these fit into the framework of Sections \ref{section2}--\ref{section3}. 

\subsection{Seiberg-Witten Data}
\label{section4.1}
Generalized Seiberg-Witten equations are systems of coupled non-linear first-order PDEs on manifolds of dimension 3 and 4. There is a system of generalized Seiberg-Witten equations associated to each quaternionic representation of a compact Lie group $G$. Rather than working in the most general setting, we will here opt for an abridged exposition which suffices for our purposes. For a more general introduction, see
 \cite{WalpuskiZhangCompactness,WalpuskiNotes,Doanthesis}. 
 
We first focus on the three-dimensional case. Let $(Y,g_0)$ denote an oriented Riemannian 3-manifold, and fix a spin structure $\frak s\to Y$ considered as a principal $\text{Sp}(1)\simeq \text{Spin}(3)$-bundle. We may write \be T^\star Y\simeq \frak s\times_{Ad}\text{Im}(\mathbb H)\label{cotangent}\ee 
 where $Ad: \text{Sp}(1)\to \text{Im}(\mathbb H)=\frak{sp}(1)$ is the adjoint representation of $\text{Spin}(3)$. Next, let $G$ be a compact Lie group, and $V$ a quaternionic vector space with a real inner-product denoted $\br -,-\kt$ carrying a quaternionic representation $$\rho: G\to \text{GL}_{\mathbb H}(V)$$ \noindent respecting the inner product. 
 
 Since $\rho$ is quaternionic, it extends to a map (denoted by the same symbol)  $\rho:\text{Sp}(1)\times G\to \text{GL}_{\mathbb H}(V)$ given by $(q,g)\mapsto q\cdot \rho(g)$. Let  \be\gamma: \text{Im}(\mathbb H)\otimes_\R \frak g \to \text{End}_{\mathbb H}(V).\label{gamma}\ee
 
 \noindent be the induced linearized action. Additionally, for $\Psi\in V$ define $\mu: V\to \text{Im}(\mathbb H)\otimes_\R \frak g$ by 
 
\be \frac{1}{2}\mu(\Psi,\Psi)=\frac{1}{2}\sum_{\alpha,j}\br \gamma(I_j\otimes \frak t_\alpha)\Psi, \Psi\kt  \ I_j\otimes t_\alpha \label{mu}\ee
 \noindent where $I_j$ for $j=1,2,3$  is a basis of $\text{Im}(\mathbb H)$, and $\{\frak t_\alpha\}$ are a basis of $\frak g$. It is straightforward to check that $\frac{1}{2}\mu$ is the {\bf hyperk\"ahler moment map} for the action of $G$ on $V$ by $\rho$. 
 
 Working globally on $Y$ now, let $P\to Y$ be a principal $G$-bundle. 

\medskip 
\begin{defn}The {\bf Spinor Bundle} associated to $\rho$ is the vector bundle
$$S= (\frak s\times_Y P) \times_{\text{Sp}(1)\times G} V.$$

\noindent It is endowed with a { Clifford multiplication} and a moment map denoted (respectively) by $$\gamma: \Omega^1(\frak g_P) \to \text{End}(S) \hspace{2cm} \mu: S \to \Omega^1(\frak g_P)$$
\noindent given fiberwise by the maps (\refeq{gamma}) and (\refeq{mu}). Here, $\Omega^1(\frak g_P)$ is the space of 1-forms valued in the adjoint bundle of $P$, viewed as the associated bundle via (\refeq{cotangent}) and the adjoint representation of $G$. 
\end{defn} 

Each set of data $(G, V,\rho)$ gives rise to a system of generalized Seiberg-Witten equations on $(Y,g_0)$. These are the following PDEs for a pair $(\Psi, A) \in \Gamma(S)\times \mathscr A(P)$ consisting of a spinor  $\Psi$ and a connection $A$ on the principal bundle $P$ respectively. Such a pair is called a {configuration}. 

\begin{defn}
The {\bf generalized Seiberg-Witten equations} determined by $(P, S, \gamma,\mu)$ are 
\begin{eqnarray}
\slashed D_A \Psi&=& 0\label{SW1} \\
 \star F_A  + \tfrac{1}{2}\mu(\Psi,\Psi)&=&0 \label{SW2}
\end{eqnarray}
where $\slashed D_A$ denotes the Dirac operator on $S$ determined by the spin connection and $A$ using the Clifford multiplication $\gamma$, and $F_A$ is the curvature of $A$. These equations are invariant under the {Gauge group} $$\mathcal G= \Gamma(P\times_{Ad} G).$$
\end{defn}

The discussion above carries over to the case of an oriented Riemannian 4-manifold $(X,g_0)$ with only two minor modifications.

\begin{enumerate}
\item[(1)]As not all 4-manifolds are spin, we impose the additional requirement that there exists a central element $-1\in Z(G)$ so that $\rho(-1)=-\text{Id}$, and consider principal bundles $P,Q$ with structure groups $G$ and $\text{Spin}^G(4)= (\text{Spin}(4)\times  G)/\Z_2$ respectively. Here, $\Z_2$ acts by $(1,1)\mapsto (-1,-1)$.
\item[(2)] In (\refeq{cotangent}), $\Lambda^1(Y)$ is replaced by $\Lambda^2_+(X)= Q\times_{\sigma} \text{Im}(\mathbb H)$ where $\sigma:\text{Spin}^G(4)\to SO(\text{Im}(\mathbb H))$ is the composition of projection to $SO(4)$ via $\text{Spin}(4)$ and the standard 3-dimensional representation of $SO(4)$. Similarly in (\refeq{SW1}--\refeq{SW2}), $\star F_A$ is replaced by $F_A^+$ and $\slashed D_A$ by $\slashed D_A^+$.   
\end{enumerate}

\noindent See  \cite{WalpuskiNotes} for a more complete discussion of the 4-dimensional case.

\bigskip 

Most equation of interest in mathematical gauge theory arise as generalized Seiberg-Witten equations for particular choices of the data $(G,V,\rho)$.

\begin{eg} The Anti-Self-Dual (ASD) Yang-Mills equations and standard Seiberg-Witten equations (and their dimensional reductions) are generalized Seiberg-Witten equations obtained from the following data. 

\begin{enumerate}
\item[$\bullet$] The data $G=SU(2)$ and $V=\{0\}$ with $\rho$ being the trivial representation gives the ASD Yang-Mills equations. 

\item[$\bullet$] The data $G=U(1)$ and $V=\mathbb H$ with $\rho$ being multiplication by $e^{i\theta}\in U(1)$ on the right reproduces the standard Seiberg-Witten equations. 
\end{enumerate}
\end{eg}

\medskip

\bigskip 
We now distinguish four cases, {\bf (I)--(IV)} as in the statement of Theorem \ref{mainb}. These cases will be referred to repeatedly throughout the proof of Theorem \ref{mainb}.

\begin{eg}[{\bf {Case (I)}}] \label{Swmultispinor} The data $G=U(1)$ and $V=\mathbb H \otimes_\C \C^r$ with $\rho$ being multiplication by $e^{i\theta}\in U(1)$ on the right on the first factor leads to the Seiberg-Witten equations with $r$ spinors. \newline \  \indent \hspace{.5cm}    More concretely, in this case $S= W \otimes E$ where $W\to Y$ is the spinor bundle of a $\text{spin}^c$ structure and $E\to Y$ an auxiliary bundle of (complex) rank $r$ with trivial determinant. Then the equations are 
\bea
\slashed D_{A}\Psi&=& 0 \\
\star F_A +\tfrac{1}{2}\mu(\Psi,\Psi)&=& 0 
\eea

\noindent where $\slashed D_A$ is the Dirac operator on $S_V$ and the moment map $\mu$ can be described as follows. Let $e^j$ denote a local frame of $T^\star Y$; the local frame of $W$ arising as the $\pm i$ eigenspaces of Clifford multiplication by $e^1$, a spinor can be written $\Psi=(\alpha,\beta)$ where $\alpha,\beta \in \Gamma(V)$. Then, the moment map is,  \be \frac{1}{2}\mu(\Psi,\Psi) = \frac{1}{2}\sum_{j=1}^3 \br  i e^j. \Psi, \Psi\kt  ie^j=  \frac{i}{2}\Big ((|\beta|^2 -|\alpha|^2) e^1 \ , \  \text{Re}(-\overline \alpha \beta) e^2 \ , \  \text{Im}(-\overline \alpha \beta)e^3\Big ).\label{2spinormomentmap} \ee

\noindent In particular $\mu$ is the sum of the standard Seiberg-Witten moment map over the $r$-spinors. See \cite{PartI, DWExistence} for further details. 

\end{eg}

\bigskip 

\begin{eg}[{\bf {Case (III)}}] \label{Swmultispinor} On a 4-manifold $X$ the equations and moment map are analogously related to the standard 4-dimensional Seiberg-Witten equations. To be precise, they take the form 
\bea
\slashed D^+_{A}\Psi&=& 0 \\
 F_A ^+ +\tfrac{1}{2}\mu(\Psi,\Psi)&=& 0 
\eea
\noindent where $\mu$ is defined analogously to (\refeq{2spinormomentmap}) using an orthonormal basis $\omega^j$ of $\Lambda^{2}_+(i\R)$ in place of the basis $e^j$.  

\end{eg}

\bigskip

\begin{eg} ({\bf {Case (II)}}).\label{case2eg}
The data $G=SU(2)$ and $V=\frak{su}(2)\otimes_\R \mathbb H$ with $\rho$ being the quaternionificaition of the adjoint representation gives rise to the equations for a flat $\text{SL}(2,\C)$ connection in dimension $3$. In this case, $$S= (\Lambda^0\oplus \Lambda^1)(\frak g_P) \hspace{2cm} \slashed D_A= \bold d_A= \begin{pmatrix} 0 & -d_A^\star \\ -d_A & \star d_A \end{pmatrix}\begin{pmatrix}\Psi_0 \\ \Psi_1\end{pmatrix}$$ 

\noindent and the moment map is $\frac{1}{2}\mu(\Psi,\Psi)=- \frac{1}{2}\star [\Psi\wedge \Psi].$ The equations therefore become 
\bea
\bold d_A \Psi &=& 0 \\
\star F_A -\tfrac{1}{2}\star [\Psi \wedge \Psi]&=&0 
\eea

\end{eg}

\bigskip 
\begin{eg} ({\bf {Case (IV)}}). In dimension 4, the data of Example \ref{case2eg} gives rise to the complex ASD equations. Analogously to the 3-dimensional case, in dimension $4$ one has
 $$ S^+ = \Lambda^1(\frak g_P) \hspace{1cm} S^-=(\Lambda^0\oplus \Lambda^2_-)(\frak g_P) \hspace{2cm}\bold d_A^+=(d_A^\star, d_A^-)$$
 \noindent and $\frac{1}{2}\mu(\Psi,\Psi)= -\frac{1}{2}[\Psi \wedge \Psi]^+.$ The corresponding equations are 
 \bea
\bold d^+_{A}\Psi&=& 0 \\
 F_A ^+ -\tfrac{1}{2}[\Psi \wedge \Psi]^+&=& 0. 
\eea
\end{eg}
\bigskip 

\begin{eg} {\bf (Vafa-Witten Equations)}\label{VWeg}
The same data as in the previous example can also give rise to the Vafa-Witten equations on  a four-manifold $X^4$ (depending on a choice of auxiliary data omitted from the discussion here---see \cite{WalpuskiNotes}, Examples 2.31 and 2.36). In this case, one has 

$$ S^+ = (\Lambda^0 \oplus \Lambda^2_+)(\frak g_P) \hspace{1cm} S^-=\Lambda^1(\frak g_P) \hspace{2cm} \slashed D_A(C,B)= d_AC + d_A^\star B $$
\noindent and if $\Psi= (C,B) \in (\Omega^0 \oplus \Omega^2_+)(\frak g_P)$ then the moment map is  $$\frac{1}{2}\mu(\Psi, \Psi)=[C,B] + \frac{1}{2}[B\times B]$$
\noindent where $[ \_ \times \_]$ is the product induced by viewing $\Lambda^2_+(TX)$ as the bundle of Lie algebras arising as the associated bundle of $SO(4)$-frames on $X$ via the positive irreducible component of the adjoint representation on $\frak{so}(4)$. Explicitly, in an orthonormal frame $\omega_i$ of $\Lambda^2_+$ it is given on a self-dual 2-form $B= \omega_i\otimes B_i$ by $[B\times B]=\epsilon_{ijk} [B_i, B_j]\omega_k$ where summation over repeated indices is implicit. 

\end{eg}
\bigskip 

\begin{eg}{\bf ($\text{ADHM}_{r,k}$ Seiberg-Witten Equations)}\label{ADHMeg}
 For $G=U(k)$ and $$V_{r,k}=\text{Hom}_\C(\C^r, \mathbb H\otimes_\C \C^k) \oplus (\mathbb H^\vee \otimes_\R \frak u(k))$$ where $\rho$ acts on the $\C^k$ factor via the standard representation and the $\frak u(k)$ factor via the adjoint gives rise to the $\text{ADHM}_{r,k}$ Seiberg-Witten Equations. Here $\mathbb H^\vee$ denotes the dual space. The zero-locus $\mu^{-1}(0)$ of the moment map in this situation coincides with the ADHM construction of the moduli space of  ASD $SU(r)$-instantons of charge $k$ on $\R^4$. These equations are conjectured to connect  Yang-Mills theory on manifolds with special holonomy to Seiberg-Witten theory on calibrated submanifolds (see \cite{DonaldsonSegal, HaydysG2SW, DWAssociatives}).  
 
 When $k=1$ these equations coincide with Example \ref{Swmultispinor}. For $(r,k)=(1,2)$ the spinor bundle is (effectively) the sum of those in Case (I) and Case (II), with the moment map being the sum of the ($U(2)$-analogue) of the moment maps for those. This particular case is studied in detail in  \cite{WalpuskiZhangCompactness}
\end{eg}

\subsection{$\Z_2$-Harmonic Spinors and Compactness} 
\label{section4.2}
This subsection summarizes known compactness results for the moduli space of solutions to generalized Seiberg-Witten equations; these were established in \cite{Taubes3dSL2C, Taubes4dSL2C,HWCompactness,TaubesU1SW,WalpuskiZhangCompactness,TaubesVW}. See also 
\cite{PartI, DWDeformations, HWCompactness,WalpuskiZhangCompactness} for additional details and exposition.  It is well-known that the moduli space of solutions to the standard Seiberg-Witten equations is compact \cite{MorganSW, KM}. This is a consequence of the inequality 

$$ \br \gamma(\mu(\Psi,\Psi))\Psi, \Psi \kt\geq  \frac{1}{4}|\Psi|^4$$

\noindent which leads to an {\it a priori} bound on the $L^2$-norm of the spinor for a solution of (\refeq{SW1}--\refeq{SW2}). For general Seiberg-Witten data, however, no such inequality can hold because $\mu^{-1}(0)\neq \emptyset$ in general. Consequently, there may be sequences of solutions $(\Psi_i, A_i)$ with $\|\Psi_i\|_{L^2}\to \infty$, which have no convergent subsequences and therefore lead to a loss of compactness of the moduli space.

In this situation, one may attempt to compactify the moduli space by ``blowing up'' the configuration space, i.e. extending it to include a boundary stratum at infinity. More specifically, we re-parameterize the subset of the configuration space with $\|\Psi\|_{L^2}>0$ by replacing $\Psi$ by a pair $(\Phi, \e)$ where $\|\Psi\|_{L^2}=\frac{1}{\e}$ and $\Phi= \e\Psi$ is a spinor with unit $L^2$-norm. After including the boundary stratum consisting of configurations with $\e=0$, the blown-up configuration space consists of triples $(\Phi,A,\e) \in \mathbb S(\Gamma(S))\times \mathscr A(P)\times [0,\infty]$, where $\mathbb S(\Gamma(S))$ denotes the sphere of spinors with unit $L^2$-norm.  The corresponding {\bf blown-up Seiberg-Witten equation} is 
\begin{eqnarray}
\slashed D_A \Phi&=& 0\label{bSW1} \\
  \star \e^2 F_A  + \tfrac{1}{2}\mu(\Phi,\Phi)&=&0  \label{bSW2} \\ 
 \|\Phi\|_{L^2}&=&1. \label{bSW3}
\end{eqnarray}

Intuitively, one expects that a sequence of solutions to the original equations with diverging $L^2$-norm should converge to a solution of the $\e=0$ verison of the blown-up equations, and that including these solutions as boundary strata would result in a compact moduli space. The upcoming Theorem \ref{compactness} establishes a version of this statement, but there are several important caveats: 

\begin{enumerate}
\item[1)] the $\e=0$ version of (\refeq{bSW2}) demands that $\Phi$ lies in the set $\mu^{-1}(0)$ in each fiber; the latter is a closed subset of each fiber of $S$, but is not a manifold because $0$ is a cone point in each fiber. 
\item[2)] The energy density $|F_A|^2$ of the curvature may concentrate along subsets of $Y$ (resp. $X$) in the limit $\e\to 0$. 

\end{enumerate}

\noindent Because of these complications, the limit of a sequence of solutions only satisfies the $\e=0$ version of (\refeq{bSW2}) away from a closed subset denoted $\mathcal Z$ called the {\bf singular set}. In the case of a non-abelian gauge group on a 4-manifold, the bubbling locus arising from Uhlenbeck compactness is also included in $\mathcal Z$. 

The following theorem combines compactness results for several generalized Seiberg-Witten equations which were proved independently by multiple authors. It unifies results on Case (I) the Seiberg-Witten equations with $r=2$ spinors in 3 dimensions \cite{HWCompactness}, Case (II) the equations for a flat $\text{SL}(2,\C)$-connection on a 3-manifold  \cite{Taubes3dSL2C,WalpuskiZhangCompactness}, Case (III) the Seiberg-Witten equations with $r=2$ spinors in 4 dimensions \cite{TaubesU1SW}, and Case (IV) the complex ASD equation in 4 dimensions \cite{Taubes4dSL2C}. In addition, it includes regularity statements for the singular set proved in \cite{TaubesZeroLoci, ZhangRectifiability}. In each case, if a sequence of solutions has a subsequence on which the $L^2$-norm remains bounded (i.e. if $\limsup \e>0$) then standard compactness arguments apply to show a subsequence converges; thus we state the theorem only in the case where $\e\to 0$.  

\begin{thm}\label{compactness} (\cite{HWCompactness,Taubes3dSL2C,TaubesU1SW, Taubes4dSL2C, TaubesZeroLoci,ZhangRectifiability})
Suppose that $Y$ is a closed, oriented 3-manifold (respectively, $X$ a 4-manifold) and $(P,G,\rho,\mu)$ generalized Seiberg-Witten data corresponding to Cases (I), (II) from the statement of Theorem \ref{mainb} (resp. Cases  (III), (IV)). Given a sequence $(\Phi_i, A_i, \e_i)$ of blown-up configurations satisfying (\refeq{bSW1}--\refeq{bSW3}), i.e. $$\slashed D_{A_i}\Phi_i =0 \hspace{1.5cm}\star  \e_i^2 F_{A_i}+\frac{1}{2}\mu(\Phi_i,\Phi_i)=0 \hspace{1.5cm} \|\Phi_i\|_{L^2}=1  $$
  \noindent (resp. $\slashed D_{A_i}^+$ and  $F_{A_i}^+$) with respect to a sequence of metrics $g_i \to g_0$ on $Y$ (resp. $X$), such that $\e_i\to 0$.

 \noindent Then, there exists a triple $(\mathcal Z_0,\Phi_0, A_0)$ where  
\begin{itemize}
\item $\mathcal Z_0 \subseteq Y$ (resp. $X$) is a closed rectifiable subset of Haudorff codimension at least 2. 
\item $\Phi_0$ is a spinor on $Y-\mathcal Z_0$ such that $|\Phi_0|$ extends as a continuous function to $Y$ (resp. $X$) with $\mathcal Z_0=|\Phi_0|^{-1}(0)$. 
\item $A_0$ is a connection on $P|_{Y-\mathcal Z_0}$ (resp. $P|_{X-\mathcal Z_0})$, 
\end{itemize}
\noindent such that $(\Phi_0, A_0)$ satisfies the $\e=0$ version of (\refeq{bSW1}--\refeq{bSW3}) on $Y-\mathcal Z_0$ (resp. $X-\mathcal Z_0$) with respect to the metric $g_0$. Furthermore, there is an $\alpha>0$ such that and after passing to a subsequence and up to gauge transformations defined on $Y-\mathcal Z_0$ (resp. $X-\mathcal Z_0$), 
 
 \be
 \Phi_i \overset{L^{2,2}_{loc}}\rightharpoonup \Phi_0 \hspace{1.5cm} A_i \overset{L^{1,2}_{loc}}\rightharpoonup A_0 \hspace{1.5cm} |\Phi_i| \overset{C^{0,\alpha}}\to |\Phi_0|.  \label{convergencetolim}
 \ee 
Here, local convergence means on compact subsets of $Y-\mathcal Z_0$ (resp. $X$), and the half-arrows in the first two statements denote convergence in the weak topology. In Case (III) the convergence  $\Phi_i\to \Phi_0$ is (strongly) $L^{2,2}_{loc}$ and in Case (IV) it is only (strongly) $L^{1,2}_{loc}$. 

Finally, in Case (II) and Case (IV) the limiting connection $A_0$ has harmonic curvature, i.e. it satisfies $d_{A_0}F_{A_0}= d_{A_0}^\star F_{A_0}=0$ where $d_{A_0}$ is the exterior covariant derivative.
\qed
\end{thm}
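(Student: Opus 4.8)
The plan is to prove Theorem~\ref{compactness} by extracting the common skeleton of the arguments of \cite{HWCompactness,Taubes3dSL2C,TaubesU1SW,Taubes4dSL2C,WalpuskiZhangCompactness}, importing the structure of the singular set from \cite{TaubesZeroLoci,ZhangRectifiability} as a black box. The first step is to establish two uniform \emph{a priori} bounds. Applying the Weitzenb\"ock formula to $\slashed D_{A_i}$, using $\slashed D_{A_i}\Phi_i=0$ to rewrite the curvature term $\gamma(\star F_{A_i})\Phi_i$ via the second equation $\star F_{A_i}=-\tfrac{1}{2\e_i^2}\mu(\Phi_i,\Phi_i)$ (resp.\ $F_{A_i}^+$), and then pairing with $\Phi_i$, produces a differential inequality of the schematic shape $\tfrac12\Delta|\Phi_i|^2+|\nabla_{A_i}\Phi_i|^2+\tfrac{c}{\e_i^2}|\Phi_i|^4\le C|\Phi_i|^2+(\text{l.o.t.})$ with $C$ independent of $i$. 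For the abelian Cases (I) and (III) the input is the pointwise inequality $\langle\gamma(\mu(\Psi,\Psi))\Psi,\Psi\rangle\ge\tfrac14|\Psi|^4$; for the non-abelian Cases (II) and (IV) the moment-map term is not sign-definite and the inequality is instead obtained from the refined, second-order Weitzenb\"ock identities of Taubes, derived by differentiating the equations once more. From the inequality the maximum principle gives $\|\Phi_i\|_{C^0}\le C$, and integrating gives $\int_Y\big(|\nabla_{A_i}\Phi_i|^2+\e_i^{-2}|\Phi_i|^4\big)\,dV_{g_i}\le C$, both uniform in $i$.

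Next I would extract the singular set. Using the energy bound, build an Almgren-type frequency function $\mathcal N_i(y,r)$ from the local Dirichlet energy of $\Phi_i$ and establish an almost-monotonicity formula for it; this yields a uniform $C^{0,\alpha}$ modulus of continuity for the scalar functions $|\Phi_i|$, so a subsequence converges in $C^{0,\alpha}$ to some $|\Phi_0|$, and one defines $\mathcal Z_0:=|\Phi_0|^{-1}(0)$. The claim that $\mathcal Z_0$ is closed, of Hausdorff codimension at least $2$, and in fact rectifiable, I would quote directly from \cite{TaubesZeroLoci,ZhangRectifiability}.

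Then, away from $\mathcal Z_0$, I would run an elliptic bootstrap. On a compact $K\Subset Y-\mathcal Z_0$ the lower bound $|\Phi_i|\ge c_K>0$ together with $\e_i^{-2}\int_K|\Phi_i|^4\le C$ forces $\mu(\Phi_i,\Phi_i)\to0$ in $L^2(K)$ and keeps $\e_i^2F_{A_i}$ controlled; after a Coulomb gauge fixing for $A_i$ --- upgraded in the non-abelian $4$-dimensional Case (IV) to an Uhlenbeck gauge, which simultaneously absorbs any curvature concentration and places the bubbling locus inside $\mathcal Z_0$ --- the Dirac equation becomes uniformly elliptic in $\Phi_i$, and interior estimates give the weak $L^{2,2}_{loc}$ (resp.\ $L^{1,2}_{loc}$) convergence of \eqref{convergencetolim}, the strong statements in Cases (III),(IV) following from Rellich and the abelian structure. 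Passing to the limit in $\slashed D_{A_i}\Phi_i=0$ gives $\slashed D_{A_0}\Phi_0=0$, and in the rescaled second equation gives $\mu(\Phi_0,\Phi_0)=0$ on $Y-\mathcal Z_0$; the normalization $\|\Phi_0\|_{L^2}=1$ survives because $\mathcal Z_0$ has measure zero so no $L^2$-mass of $\Phi_i$ escapes into it. For the harmonic-curvature conclusion in Cases (II),(IV), take the covariant codifferential of the rescaled curvature equation and use $\slashed D_{A_i}\Phi_i=0$ to cancel the terms quadratic in $\nabla_{A_i}\Phi_i$, obtaining $d_{A_i}^\star F_{A_i}\to0$ on $Y-\mathcal Z_0$; combined with the Bianchi identity $d_{A_0}F_{A_0}=0$ this makes $F_{A_0}$ harmonic.

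I expect the main obstacle to be Step~1 in the non-abelian Cases (II) and (IV): absent sign-definiteness of the moment-map term, the uniform $C^0$ bound on $\Phi_i$ is genuinely delicate, and this is precisely where the technical weight of \cite{Taubes3dSL2C,Taubes4dSL2C} lies. A close second is disentangling, in Case (IV), the two distinct concentration phenomena --- the vanishing of $\Phi_i$ and the Uhlenbeck bubbling of $F_{A_i}$ --- and showing that both are confined to the single rectifiable set $\mathcal Z_0$ while carrying no $L^2$-mass of $\Phi_i$; interleaving the frequency-function and gauge-fixing arguments to achieve this is what forces the weaker $L^{1,2}_{loc}$ conclusion in that case.
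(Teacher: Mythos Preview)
The paper does not prove this theorem. It is stated with attributions to \cite{HWCompactness,Taubes3dSL2C,TaubesU1SW,Taubes4dSL2C,TaubesZeroLoci,ZhangRectifiability} and closed with \verb|\qed|; no argument is supplied, and the paper treats it entirely as input from the literature. So there is nothing in the paper to compare your proposal against: your sketch is really a summary of the cited works rather than an alternative to anything the author does.

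As a summary of those works your outline is broadly accurate in shape (Weitzenb\"ock identities $\Rightarrow$ integral/$C^0$ bounds $\Rightarrow$ frequency function and H\"older continuity of $|\Phi_i|$ $\Rightarrow$ definition of $\mathcal Z_0$ $\Rightarrow$ local gauge fixing and elliptic estimates away from $\mathcal Z_0$), but one concrete point is wrong. In Cases~(I) and~(III) the pointwise inequality $\langle\gamma(\mu(\Psi,\Psi))\Psi,\Psi\rangle\ge\tfrac14|\Psi|^4$ does \emph{not} hold: the paper itself emphasizes (just above the statement of Theorem~\ref{compactness}) that this inequality is specific to the standard one-spinor equations and fails whenever $\mu^{-1}(0)\neq\{0\}$, which is exactly the situation in the multi-spinor Cases~(I),(III). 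What one has instead is an identity of the form $\langle\gamma(\mu(\Psi,\Psi))\Psi,\Psi\rangle=c\,|\mu(\Psi,\Psi)|^2$, which is nonnegative but gives no coercivity in $|\Psi|^4$. The resulting differential inequality therefore controls $|\nabla_{A_i}\Phi_i|^2$ and $\e_i^{-2}|\mu(\Phi_i,\Phi_i)|^2$, not $\e_i^{-2}|\Phi_i|^4$, and the $C^0$ bound on $\Phi_i$ in \cite{HWCompactness,TaubesU1SW} is obtained from that weaker information together with the frequency function, not directly from a maximum principle applied to your stated inequality. The abelian cases are thus closer in spirit to the non-abelian ones than your write-up suggests.
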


\bigskip
 
The limiting configuration $(\mathcal Z_0, \Phi_0, A_0)$ satisfies the $\e=0$ version of (\refeq{bSW1}--\refeq{bSW3}), which reads 

\be \slashed D_{A_0}\Phi_0 =0 \hspace{1.5cm} \mu(\Phi_0,\Phi_0)=0 \hspace{1.5cm} \|\Phi_0\|_{L^2}=1,  \label{e=0version}\ee
and is considered up to the action of gauge transformations on $Y-\mathcal Z_0$ (resp. $X-\mathcal Z_0$). This equation is not elliptic, even modulo gauge, as the symbol of ${\bold d_A}$ degenerates in the limit $\e\to0$. The {\bf Haydys Correspondence} (see \cite{PartI} Section 2, \cite{DWDeformations}), however, shows that by exploiting gauge invariance in a different way than gauge-fixing, (\refeq{e=0version}) can be recast as an elliptic equation whose symbol degenerates only along $\mathcal Z_0$.

The Haydys Correspondence may be paraphrased as follows (see \cite{DWDeformations} Section 4 and  \cite{HaydysCorrespondence} for details). Let $\pi: \mu^{-1}(0)\to \mathfrak X=\mu^{-1}(0)/G$ be the projection map to the fiberwise quotient by the action of $G$; $\mathfrak X$ is a bundle whose fibers are hyperk\"ahler orbifolds isometric to the hyperk\"ahler quotient $V///G$.  

Given a solution $(\mathcal Z_0,\Phi_0, A_0)$ of (\refeq{e=0version}), the projection $s=\pi(\Phi_0)\in \Gamma(\mathfrak X)$ is a solution of a different PDE, the {\it Fueter equation} $\mathfrak F(s)=0$. The content of the Haydys correspondence is that one can recover the triple $(\mathcal Z_0,\Phi_0, A_0)$ from $s$ even though $s$ retains no information about the connection $A_0$ or the action of $G$. This is done by choosing a local lift $\Phi_0$ of $s$ to $\mu^{-1}(0)\subset \Gamma(S)$ on $Y-\mathcal Z_0$ (resp. $X-\mathcal Z_0$). Such a lift determines a splitting  of $S$ along the image of $\Phi$ as follows. First, decompose $S= T(\mu^{-1}(0))\oplus T(\mu^{-1}(0))^\perp$; the first factor further decomposes as $T(\mu^{-1}(0))= S^\text{Re}_{\Phi_0} \oplus \frak g\Phi_0$ \ \footnote{The notation of the superscripts is chosen to agree with the case of the Seiberg-Witten equations with $r=2$ spinors where the spinor bundle admits a real structure $\tau$ with $\tau^2=Id$, see Sections 2--3 of \cite{PartI}. Other authors \cite{DWDeformations} denote these $\mathfrak H_{\Phi_0}, \mathfrak N_{\Phi_0}$, while we reserve the latter notation for the splitting in (\refeq{fixeddegeneracy}) } , where $\frak g \Phi_0=\{ v \Phi_0 \ | \ v\in \mathfrak g_P\}$. The decomposition determined by $\Phi_0$ is then \be S|_{Y-\mathcal Z_0}= S^\text{Re}_{\Phi_0}\oplus S^\text{Im}_{\Phi_0}\label{ReImSplitting}\ee
\noindent where $S^\text{Im}_{\Phi_0}=\frak g\Phi_0 \oplus T(\mu^{-1}(0))^\perp$. It can then be shown that the condition that $\nabla_{A_0}\Phi_0 \in \Gamma(S^\text{Re}_{\Phi_0})$ determines $A_0$ uniquely, in which case $\frak F(s)=0$ is equivalent to (\refeq{e=0version}). Notice that one either side of the Haydys correspondence the singular behavior along $\mathcal Z_0$ cannot be eliminated. 
\bigskip 

In Cases (I)--(IV) of Theorem \ref{mainb} (and Examples \ref{VWeg}, \ref{ADHMeg}),  the Haydys Correspondence and the Fueter equation admit a simplification due to the following additional structure: there exists a linear subspace $E\subset \mu^{-1}(0)\subset V$ such that every $G$-orbit intersects $E$ in exactly 2 points. In this situation, for a local lift valued in $E$, one has $S^\text{Re}_{\Phi_0}=E$ and the Fueter equation is simply the Dirac equation on spinors considered up to sign. The data of a solution of (\refeq{e=0version}) is then equivalent to the following data. 

\begin{defn}\label{Z2Harmonic}
A {\bf $\Z_2$-Harmonic Spinor} valued in a (real) Clifford module $E\to Y$ (resp. $X$) is a triple $(\mathcal Z_0, \ell_0, \Phi_0),$ where 
\begin{enumerate}
\item[(1)] $\mathcal Z_0 \subset Y$ (resp. $X$) is a closed, rectifiable subset of Hausdorff codimension 2,
\item[(2)] $\ell_0 \to Y- \mathcal Z_0$  (resp. $X-\mathcal Z_0$) is real line bundle, and  
\item[(3)] $\Phi_0 \in \Gamma(E \otimes_\R \ell_0 )$ is a spinor with $\nabla \Phi_0\in L^2$ and whose norm $|\Phi_0|$ extends to $Y$ (resp. $X$) as a H\"older continuous function, 
\end{enumerate} such that  
\be \|\Phi_0\|_{L^2}=1 \hspace{1.5cm} \slashed D_{\mathcal Z_0}\Phi_0=0 \hspace{1.5cm} |\Phi_0|^{-1}(0)=\mathcal Z_0.\label{Z2Dirac}\ee  

\noindent $\Z_2$-harmonic spinors are always considered up to the equivalence $\Phi_0 \mapsto - \Phi_0$. 
\end{defn}    

\bigskip

In (\refeq{Z2Dirac}), the Dirac operator is formed using the connection arising from the connection on $E$ and the unique flat connection with holonomy in $\Z_2$ on the line bundle $\ell_0$. Under the Haydys correspondence, this unique flat connection of $\ell_0$ is equivalent to the connection arising from $A_0$ in the conclusion of Theorem \ref{compactness} (see \cite{PartI} Section 3) and the spin connection. For all the cases of Theorem \ref{mainb}, the $\Z_2$-harmonic spinors that arise are sections of Clifford modules of real rank 4. In Case (II), and Case (IV) the $\Z_2$-harmonic spinors arising from Theorem \ref{compactness} are also called $\Z_2$-harmonic 1-forms, i.e. spinors for the Clifford modules $(\Omega^0\oplus \Omega^1)(\R)$ or $\Omega^1(\R)$ in 3 and 4 dimensions respectively.

\section{Concentration Properties of Generalized Seiberg-Witten Equations}
\label{newsection5}

As explained in the introduction, it is desirable to improve the convergence statements in Theorem \ref{compactness} to $C^\infty_{loc}$. Although the equations are elliptic for $\e\neq 0$, naive attempts to bootstrap convergence are foiled by increasingly large powers of $\e$ entering the elliptic estimates (see Section \ref{section5}). The abstract framework introduced in Section \ref{section2}--\ref{section3} can be applied to overcome this problem. Let $(\Phi_i, A_i, \e_i)$ be a sequence of solutions to (\refeq{bSW1}--\refeq{bSW3}) converging to a $\Z_2$-harmonic spinor $(\mathcal Z_0, A_0,\Phi_0)$ in the sense of Theorem \ref{compactness}. Then the un-renormalized configuration may be written as a perturbation of the limit $(\frac{\Phi_i}{\e_i}, A_i)=(\frac{\Phi_0}{\e_i}, A_0)+ (\ph_i, a_i)$, where $(\ph_i, a_i)$ solve the equation

\be \mathcal L_{(\frac{\Phi_0}{\e_i}, A_0)} (\ph_i, a_i) + Q(\ph_i, a_i)=-E_0 \label{deformationeq}\ee

\noindent Here,  $\mathcal L_{(\Psi,A)}$ denotes the linearized Seiberg-Witten equations linearized at the configuration $(\Psi,A)$, $Q(-,-)$ is a quadratic term, and $E_0=SW(\Phi_0, A_0)$ is the error by which the limiting configuration fails to solve the Seiberg-Witten equations. The next two subsections show that (\refeq{deformationeq}) behaves as a concentrating Dirac operator with fixed degeneracy as $\e\to 0$, with the singular set $\mathcal Z_0$ occupying the role of the set denoted by the same symbol in Sections \ref{section2}--\ref{section3}.

\subsection{The Concentration Property}
In this subsection, it is shown that the linearization of the generalized Seiberg-Witten equations associated to any data satisfy the concentration property of Definition \ref{concentrationproperty}. In Subsection \ref{section4.4}, it is shown that the non-linearity of the equations satisfies the similar criteria necessary for Corollary \ref{cornonlinear} to apply. 

On a 3-manifold, the linearization of the generalized Seiberg-Witten equations (\refeq{SW1}--\refeq{SW2}) is as follows. Let $(\frac{\Phi}{\e}, A)$ denote a configuration, where  $\|\Phi\|_{L^2}=1$ and $\e>0$. The linearization at $(\frac{\Phi}{\e}, A)$ acting on a variation $(\ph,a)$ is 

$$\d{}{s}\Big |_{s=0} SW\left(\tfrac{\Phi}{\e}+ s\ph, A+ sA\right)=\begin{pmatrix} \slashed D_A\ph + \gamma(a)\tfrac{\Phi}{\e} \vspace{.15cm}\\ \tfrac{\mu(\ph, \Phi)}{\e}+ \star d_A a\end{pmatrix}.$$

\noindent where $\mu( \ph, \psi)$ denotes the polarization of the moment map. To make this into an elliptic system, we supplement the configuration $(\frac{\Phi}{\e},A)$ with an auxiliary $0$-form $a_0\in \Omega^0(\frak g_P)$ and impose the gauge-fixing condition 
\be -d_{A}^\star a + \frac{\mu_0(\ph, \Phi)}{\e}=0 \label{gaugefixing}\ee

\noindent where we have updated our notation so that $a=(a_0, a_1)\in (\Omega^0\oplus \Omega^1)(\frak g_P)$ and $\mu=(\mu_0, \mu_1) \in (\Omega^0\oplus \Omega^1)(\frak g_P)$. Here, $\mu_0: S\otimes S\to \frak g_P$ is defined in a local trivialization $t_\alpha$ of $\frak g_P$ by $\mu_0(\ph,\psi):= \sum_\alpha \br  \frak t_\alpha \ph, \psi\kt \frak t_\alpha$. The equations may then be written in the suggestive form:

\be \mathcal L_{(\Phi, A,\e)}\begin{pmatrix} \ph \\ a \end{pmatrix}=\left(D+\frac{1}{\e}\mathcal A\right)\begin{pmatrix} \ph \\ a \end{pmatrix}\label{linearizationasCDO}\ee   

\noindent where 
\be D=\begin{pmatrix}\slashed D_A &0 \vspace{.15cm}\\ 0 & \bold d_A \end{pmatrix}  \ \  \hspace{3cm}\ \  \mathcal A= \begin{pmatrix}0 & \gamma(\_)\Phi \vspace{.15cm}\\ \mu(\_,\Phi) & 0 \end{pmatrix}, \label{4.7} \ee

\noindent and $\bold d_A= \begin{pmatrix} 0 & -d_A^\star \\ -d_A & \star d_A \end{pmatrix}\begin{pmatrix}a_0 \\ a_1\end{pmatrix}.$   On a 4-manifold $X$, the auxiliary form $a_0$ is not necessary for ellipticity, and the formula (\refeq{linearizationasCDO}) for $\mathcal L_{(\Phi,A,\e)}$  is the same after replacing $\slashed D_A$ by  $\slashed D_A^+$ and $\bold d_A$ by $\bold d_A^+=(-d_A^\star, d_A^+)$. 


\medskip 

The symbol of $D$ in \refeq{4.7} is $$\sigma_D(\xi)= \begin{pmatrix}  \rho(\xi)& 0  \vspace{.15cm}\\ 0 & \bold{cl}(\xi)\end{pmatrix}\hspace{1cm} \text{for }\hspace{1cm}\xi \in \Omega^1(T^\star Y). $$
\noindent where $\rho, {\bf cl}$ are the symbols of $\slashed D_A, \bold d_A$ respectively (or $\slashed D_A^+, \bold d_A^+$ in dimension 4). The next lemma is a particular instance of  a more general result concerning commuting Clifford pairs discussed in \cite{ManosConcentrationI, ManosConcentrationII}. 

\begin{lm}\label{concentrationL}
The linearization of the generalized Seiberg-Witten equations written in the form (\refeq{linearizationasCDO})--(\refeq{4.7}) obeys the Concentration Property of Definition \ref{concentrationproperty}, i.e. $\sigma_D$ and $\mathcal A$ satisfy 

\be \mathcal A^\star  \sigma_D(\xi) = \sigma_D(\xi)^\star  \mathcal A\label{concentrationLstatement}\ee \noindent for all $\xi \in T^\star Y$.  \end{lm}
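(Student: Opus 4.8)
The plan is to verify the identity \eqref{concentrationLstatement} by working blockwise, exploiting the fact that both $\sigma_D$ and $\mathcal A$ are block-structured with respect to the splitting $S\oplus\bigl(\Omega^0\oplus\Omega^1\bigr)(\mathfrak g_P)$ (resp. $S^+\oplus(\Omega^0\oplus\Omega^2_-)$ in dimension 4). Since $\sigma_D$ is block-diagonal and $\mathcal A$ is block-off-diagonal, the composite operators $\mathcal A^\star\sigma_D(\xi)$ and $\sigma_D(\xi)^\star\mathcal A$ are each block-off-diagonal, and matching them reduces to two scalar identities: one relating the symbol $\rho(\xi)$ of $\slashed D_A$, the Clifford multiplication map $\gamma(\underline{\ \ })\Phi$, and the moment-map polarization $\mu(\underline{\ \ },\Phi)$; and a second, adjoint, identity obtained by interchanging the two factors. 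So the real content is the single relation
\[
\mu(\underline{\ \ },\Phi)^\star\,\rho(\xi) \;=\; \mathbf{cl}(\xi)^\star\,\mu(\underline{\ \ },\Phi),
\qquad
\bigl(\gamma(\underline{\ \ })\Phi\bigr)^\star\,\mathbf{cl}(\xi) \;=\; \rho(\xi)^\star\,\bigl(\gamma(\underline{\ \ })\Phi\bigr),
\]
together with the observation that the two are adjoint to one another.

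\textbf{Key steps.} First I would record the precise form of the symbols: $\rho(\xi)$ is Clifford multiplication by $\xi$ on $S$, and $\mathbf{cl}(\xi)$ is the symbol of $\mathbf d_A$, namely $a\mapsto(-\iota_\xi a_1,\ -\xi a_0+\star(\xi\wedge a_1))$ on $(\Omega^0\oplus\Omega^1)(\mathfrak g_P)$, which is (anti-)self-adjoint and squares to $-|\xi|^2$. Second, I would use the defining compatibility of the Clifford multiplication $\gamma$ and the moment map $\mu$ coming from the quaternionic structure: $\gamma$ and $\mu$ are mutually adjoint in the sense that $\langle\gamma(\eta)\psi,\psi'\rangle = \langle\eta,\mu(\psi,\psi')\rangle$ for $\eta\in\Omega^1(\mathfrak g_P)$, and $\gamma$ intertwines Clifford multiplication on the $\Omega^1$-factor with Clifford multiplication $\rho$ on $S$ — this is exactly the statement that $(\rho,\gamma)$ form a ``commuting Clifford pair'' in the sense of \cite{ManosConcentrationI, ManosConcentrationII}, which is built into the construction of the spinor bundle from a quaternionic representation. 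Third, I would combine these: for $\psi\in S$ and $a\in(\Omega^0\oplus\Omega^1)(\mathfrak g_P)$, compute $\langle \mathcal A^\star\sigma_D(\xi)(\psi,a),(\psi',a')\rangle$ by expanding both block entries, and check it equals $\langle(\psi,a),\sigma_D(\xi)^\star\mathcal A^\star(\psi',a')\rangle$ wait—rather, directly verify $\langle \sigma_D(\xi)(\psi,a),\mathcal A(\psi',a')\rangle=\langle\mathcal A(\psi,a),\sigma_D(\xi)(\psi',a')\rangle$ reorganized appropriately — using the adjunction between $\gamma$ and $\mu$ to move the symbol from one factor to the other and the Clifford relation to handle the sign. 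Finally, I would remark that the 4-dimensional case is identical after the substitutions $\slashed D_A\rightsquigarrow\slashed D_A^+$, $\mathbf d_A\rightsquigarrow\mathbf d_A^+$, since the relevant algebraic structures are unchanged and the $a_0$-component simply drops out.

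\textbf{Main obstacle.} I expect the only genuine subtlety is bookkeeping the adjoints and signs correctly: $\sigma_D(\xi)$ is skew-adjoint (the Clifford relation $\sigma_D(\xi)^\star=-\sigma_D(\xi)$ for unit $\xi$), so $\sigma_D(\xi)^\star$ in \eqref{concentrationLstatement} introduces a sign that must be tracked consistently against the sign conventions for $\gamma$, $\mu$, and the polarization; and one must be careful that $\mu$ as a bilinear form has the symmetry $\mu(\psi,\psi')$ needed so that the ``transpose'' block $\mu(\underline{\ \ },\Phi)$ is genuinely the adjoint of $\gamma(\underline{\ \ })\Phi$ with respect to the chosen inner products on $S$ and $\Omega^1(\mathfrak g_P)$. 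Once the conventions are pinned down, the identity is a direct consequence of the commuting-Clifford-pair structure and requires no analysis — it is purely pointwise linear algebra on the fibers, so no compactness or regularity hypotheses enter.
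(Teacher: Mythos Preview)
Your approach is correct and essentially identical to the paper's: the paper reduces the block identity to the two relations $\rho(\xi)\gamma(a)=-\gamma(\mathbf{cl}(\xi)a)$ and $\mathbf{cl}(\xi)\mu(\varphi,\psi)=-\mu(\rho(\xi)\varphi,\psi)$ and states they are easily verified in an orthonormal frame---precisely the commuting-Clifford-pair content you invoke. (Your displayed block identities have $\rho(\xi)$ and $\mathbf{cl}(\xi)$ interchanged and so fail to type-check as written, but this is exactly the sign/adjoint bookkeeping you yourself flag as the main obstacle and does not affect the strategy.)
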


 \begin{proof}
 For each $\xi \in \Omega^1(\R)$, (\refeq{concentrationLstatement}) is equivalent to the following equalities for all $a\in (\Omega^0\oplus \Omega^1)(\frak g_P)$,  and $\ph, \psi \in \Gamma(S)$:  
\begin{eqnarray}
\rho(\xi)\gamma(a)&=&-\gamma({\bf cl}(\xi)a)\label{usefulid1} \\
{\bf cl}(\xi) \mu(\ph,\psi)&=& -\mu(\rho(\xi)\ph, \psi). \label{usefulid2}
\end{eqnarray}
In (\refeq{usefulid1}), ${\bf cl}$ is extended to act on $\frak g_P$-valued forms via the form components.  The expressions (\refeq{usefulid1}--\refeq{usefulid2}) are  easily verified in an oriented orthonormal frame of $T^\star Y$ (resp. $T^\star X$) and $\frak g_P$.

 \end{proof}

\subsection{Fixed Degeneracies}
\label{section4.4}

This subsection shows that the  fixed degeneracy assumption of Definition \ref{concentrationproperty} is satisfied for the linearized Seiberg-Witten equations, and that the non-linear terms have the form necessary for Corollary \ref{cornonlinear} to apply. 

To begin, there is the following alternative description of the subspaces $S^\text{Re}, S^\text{Im}$ from (\refeq{ReImSplitting}). Since $\frac{1}{2}\mu(\Phi, \Phi)$ is bilinear, its linearization at a spinor $\Phi_0$ is given by $\mu(-,\Phi_0)$, where $\mu$ is now interpretted as the bilinear form arising as the polarization of the original quadratic map. On $Y-\mathcal Z_0$ where $\Phi_0$ is non-vanishing, one then has , \be S^\text{Re}_{}= \ker(\mu(-,\Phi_0)) \hspace{1.3cm}\text{}\hspace{1.3cm}S_{}^\text{Im}=(S_{}^\text{Re})^\perp \label{SReDef}\ee  

\noindent  (see \cite{Doanthesis}, Proposition 2.1.5  and \cite{PartI}, Section 2). The next proposition is proved on a case by case basis for the different equations to which Theorem \ref{mainb} applies. The result could be shown using a more abstract framework, but  we find it instructive to give explicit descriptions as the splitting described by the proposition provides a novel way of writing many of these equations which may be useful elsewhere. 
\begin{prop}\label{fixeddegeneracyL}
The linearization of the generalized Seiberg-Witten equations written in the form (\refeq{linearizationasCDO})--(\refeq{4.7}) obeys the fixed degeneracy assumption of Definition \ref{concentrationproperty}, i.e. in dimension $n=3$ there is a splitting of vector bundles  $$S\oplus (\Omega^0\oplus \Omega^1)(\frak g_P)=\frak N\oplus \frak H$$   
 which respects Clifford multiplication $\gamma$ and is parallel with respect to $\nabla_{A_0}$ such that the map $\mathcal A$ of (\refeq{4.7}) has the block diagonal form (\refeq{fixeddegeneracy}). 
 
 In dimension $n=4$ there are splittings $$S^+\oplus \Omega^1(\frak g_P) = \frak N^+ \oplus \frak H^+ \hspace{1cm}\text{ and }\hspace{1cm}S^-\oplus (\Omega^0\oplus \Omega^2_+) (\frak g_P)= \frak N^- \oplus \frak H^-$$ for which the same conclusions hold. 
\end{prop}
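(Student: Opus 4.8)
The plan is to verify the splitting $S \oplus (\Omega^0 \oplus \Omega^1)(\mathfrak{g}_P) = \mathfrak{N} \oplus \mathfrak{H}$ (and its four-dimensional analogue) case by case, using the characterization \eqref{SReDef} of $S^{\text{Re}}, S^{\text{Im}}$ together with the explicit description of the Clifford multiplication $\gamma$ and moment map $\mu$ recorded in Examples \ref{Swmultispinor}--\ref{case2eg}. The key structural observation, valid in all of Cases (I)--(IV), is that $\mathcal{A}$ of \eqref{4.7} is built from $\gamma(\_)\Phi_0$ and $\mu(\_,\Phi_0)$, so that $\ker(\mathcal{A})$ naturally decomposes according to whether one is in the spinor factor or the form factor. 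On the form side, $\ker(\gamma(\_)\Phi_0)$ is the annihilator of $\Phi_0$ under the Clifford action, which by the identity \eqref{usefulid1} of Lemma \ref{concentrationL} is related to the pointwise stabilizer; on the spinor side, $\ker(\mu(\_,\Phi_0)) = S^{\text{Re}}_{\Phi_0}$ by \eqref{SReDef}. One then assembles $\mathfrak{N}$ out of these two kernels and $\mathfrak{H}$ out of their orthogonal complements, and checks that in this splitting $\mathcal{A}$ is block-diagonal with invertible lower block $A_{\mathfrak{H}}$.

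Concretely, for the abelian cases (Case (I) on $Y^3$ and Case (III) on $X^4$), the gauge group is $U(1)$, so $\mathfrak{g}_P$ is a trivial real line bundle, and the form factor contributes a single Clifford action $ie^j \cdot \Phi_0$. Writing $\Phi_0 = (\alpha_0, \beta_0)$ in the local frame of $W$ as in \eqref{2spinormomentmap}, one computes $\ker(\mu(\_,\Phi_0))$ and $\ker(\gamma(\_)\Phi_0)$ directly from \eqref{2spinormomentmap} and finds that $\mathfrak{H}$ has real rank $2$ (spanned by the unit-norm direction of $\Phi_0$ under $\gamma$ and the matching form component), with $A_{\mathfrak{H}}$ multiplication by a nonzero multiple of $|\Phi_0|$; parallelism with respect to $\nabla_{A_0}$ follows because, by the defining property of the Haydys correspondence, $\nabla_{A_0}\Phi_0 \in \Gamma(S^{\text{Re}}_{\Phi_0})$, which forces the splitting \eqref{ReImSplitting} to be $\nabla_{A_0}$-parallel, and the form-side summand inherits parallelism from the compatibility of $A_0$ with the flat $\mathbb{Z}_2$-connection on $\ell_0$. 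For the $\text{SL}(2,\mathbb{C})$ cases (Case (II) on $Y^3$, Case (IV) on $X^4$), one uses instead that $S = (\Lambda^0 \oplus \Lambda^1)(\mathfrak{g}_P)$ and $\mu(\Psi,\Psi) = -\star[\Psi\wedge\Psi]$, so that $\mu(\_,\Phi_0)$ is the bracket against the harmonic 1-form $\Phi_0$; here $S^{\text{Re}}_{\Phi_0}$ is the subbundle of forms pointwise proportional to $\Phi_0$ (of real rank equal to $\dim \mathfrak{g} = 3$ in the spinor factor, with the complement being $[\mathfrak{g},\Phi_0]$-type directions), and one again reads off that $\mathfrak{H}$ is a rank-$4$ bundle on which $\mathcal{A}$ acts invertibly by a multiple of $|\Phi_0|$. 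The compatibility with $\gamma$ is checked using the identities \eqref{usefulid1}--\eqref{usefulid2} already established.

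The main obstacle I anticipate is verifying that the splitting is genuinely \emph{parallel} with respect to $\nabla_{A_0}$, not merely fiberwise orthogonal. Pointwise, the decomposition into $\ker(\mathcal{A})$ and its complement is automatic from linear algebra; what requires real input is that $\nabla_{A_0}$ preserves each summand. This is precisely where the defining property of the connection $A_0$ produced by the Haydys correspondence — namely $\nabla_{A_0}\Phi_0 \in \Gamma(S^{\text{Re}}_{\Phi_0})$ — must be invoked, and one must trace through how this, combined with the $G$-equivariance of $\gamma$ and $\mu$ and the fact that $A_0$ restricts to the flat $\mathbb{Z}_2$-connection under the correspondence, propagates to parallelism of both the spinor-factor splitting $S^{\text{Re}}_{\Phi_0} \oplus S^{\text{Im}}_{\Phi_0}$ and the form-factor splitting. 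A secondary bookkeeping obstacle is keeping the four cases organized, since the ranks and the precise identification of $\mathfrak{H}$ differ (rank $2$ in the abelian cases, rank $4$ in the $\text{SL}(2,\mathbb{C})$ cases), though in every case the relevant Clifford module has real rank $4$ as noted after Definition \ref{Z2Harmonic}, which provides a useful consistency check on the computation.
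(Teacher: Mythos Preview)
Your overall strategy---a case-by-case analysis identifying $\mathfrak{N}$ with the kernel of $\mathcal{A}$, split into the spinor-side kernel $S^{\text{Re}}=\ker(\mu(\_,\Phi_0))$ and the form-side kernel $\ker(\gamma(\_)\Phi_0)$, with parallelism deduced from the Haydys condition $\nabla_{A_0}\Phi_0\in\Gamma(S^{\text{Re}})$---is precisely the paper's approach. However, your concrete identification of $\mathfrak{H}$ is wrong in every case, and carrying it through would produce an incorrect splitting.

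In Case~(I) (and likewise~(III)), the map $\gamma(\_)\Phi_0:(\Omega^0\oplus\Omega^1)(i\mathbb{R})\to S$ has \emph{trivial} kernel on $Y-\mathcal{Z}_0$: the four vectors $ie^j\cdot\Phi_0$ for $j=0,\dots,3$ are linearly independent and span $S^{\text{Im}}$. Hence the \emph{entire} form bundle lies in $\mathfrak{H}$, and the splitting is $\mathfrak{N}=S^{\text{Re}}$ (real rank~$4$), $\mathfrak{H}=S^{\text{Im}}\oplus(\Omega^0\oplus\Omega^1)(i\mathbb{R})$ (real rank~$8$), not a rank-$2$ bundle with $A_{\mathfrak H}$ scalar. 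The paper verifies that Clifford multiplication by real forms preserves $S^{\text{Re}}$ by noting that replacing $e^j$ by $e^je^k$ in the moment-map formula~\eqref{linearizedmoment} merely permutes the frame, and obtains parallelism by writing $\varphi^{\text{Re}}=b\cdot\Phi_0$ for $b\in\Omega^\bullet(\mathbb{R})$ and differentiating. In Case~(II) (and~(IV)) the form-side kernel $\Omega^{\text{Re}}=\ker(\gamma(\_)\Phi_0)$ \emph{is} nontrivial---it consists of $\mathfrak{g}_P$-valued forms whose Lie-algebra component is parallel to that of $\Phi_0$---and in fact coincides with $S^{\text{Re}}$, since here $S=(\Omega^0\oplus\Omega^1)(\mathfrak{g}_P)$ and $\gamma(\_)\Phi_0=[\_\wedge\Phi_0]=\mu(\_,\Phi_0)$. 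The correct splitting is $\mathfrak{N}=S^{\text{Re}}\oplus\Omega^{\text{Re}}$ (rank~$8$), $\mathfrak{H}=S^{\text{Im}}\oplus\Omega^{\text{Im}}$ (rank~$16$). Your rank-$4$ claim appears to conflate $\mathfrak{H}$ with the rank-$4$ Clifford module $E$ hosting the limiting $\mathbb{Z}_2$-harmonic spinor, but that module is isomorphic to $S^{\text{Re}}$ and sits on the $\mathfrak{N}$-side, not the $\mathfrak{H}$-side.
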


\begin{proof} The proposition is proved separately for cases (I)--(VI) of Theorem \ref{mainb}. 
\medskip

\noindent{\bf Case (I):} (Two spinor Seiberg-Witten on $Y^3$).  In this case, the $\Z_2$-harmonic spinors are as in Definition \ref{Z2Harmonic} with $E$ the spinor bundle of a spin structure on $Y$. A limiting configuration $(\mathcal Z_0, A_0, \Phi_0)$ satisfying (\refeq{e=0version}) gives rise to such a $\Z_2$-harmonic spinor as follows: the Haydys Correspondence gives an isomorphism $S^\text{Re}_{\Phi_0}\simeq E\otimes \ell_0$, and under this isomorphism the connection induced on $S^\text{Re}$ by $A_0$ is intertwined with the connection formed from the spin connection on $E$ and the unique flat connection on $\ell_0$ with holonomy in $\Z_2$. Thus in this case, the limiting connection $A_0$ on $Y-\mathcal Z_0$ is itself flat with holonomy in $\Z_2$ (see also \cite{HWCompactness} Appendix I and Sections 2--3 of \cite{PartI}). We tacitly also refer to the triple $(\mathcal Z_0, A_0, \Phi_0)$ as a $\Z_2$-harmonic spinor.

Let  $S^\text{Re}\to Y-\mathcal Z_0$ be the bundle defined in (\refeq{SReDef}). Then define \be \frak N:=S^\text{Re}\hspace{3cm} \frak H:=S^\text{Im} \oplus (\Omega^0\oplus \Omega^1)(i\R).\label{parallelsplittingCase1}\ee

\noindent The splitting \refeq{parallelsplittingCase1} is respected by Clifford multiplication by forms in $\alpha \in \Omega^0(\R)\oplus \Omega^1(\R)$. Indeed, the linearized moment map (cf. \refeq{2spinormomentmap}) now takes the form \be \mu(\ph, \Phi_0)=\sum_{j=0}^3 \br  i e^j \ph, \Phi_0\kt e^j\otimes i\label{linearizedmoment}\ee
 
\noindent where $\{e^0=1, e^1, e^2 , e^3\}$ is an orthonormal frame of $(\Omega^0\oplus \Omega^1)(\R)$ and $i=\sqrt{-1}$ is the basis element of the Lie algebra of $U(1)$. To show Clifford multiplication respects the splitting, it suffices to show that $\ph \in S^\text{Re}$, i.e. $\mu(\ph, \Phi_0)=0$ then $\mu(e^k.\ph, \Phi_0)=0$ as well. This following from the observation that replacing $e^j$ by $e^j.e^k$ in (\refeq{linearizedmoment}) simply results in a permutation of the frame $\{e^0, e^1, e^2 ,e^3\}$. Since Clifford multiplication respects $S^\text{Re}$, it also respects the orthogonal complement $S^\text{Im}$. 

Next, we show that the connection $\nabla_{A_0}$ respects the splitting as well, i.e. $\nabla_{A_0}\ph^\text{Re}\in S^\text{Re}$ and likewise for $S^\text{Im}$. Indeed, the preceding paragraph implies that $S^\text{Re}=\{b.\Phi_0 \ | \ b \in \Omega^0(\R)\oplus \Omega^1(\R)\}$. Similarly, $S^\text{Im}=\{(ia).\Phi_0 \ | \ ia \in \Omega^0(\R)\oplus \Omega^1(i\R)\}$. Using this description, let $\ph^\text{Re}=b.\Phi_0\in \Gamma(S^\text{Re})$ be a spinor in $S^\text{Re}$. Then $$\nabla_{A_0}\ph^\text{Re}=\nabla_{A_0}(b.\Phi_0)= db.\Phi_0 + b.\nabla_{A_0}\Phi_0 \in \Gamma(S^\text{Re})$$
since $\nabla_{A_0}\Phi_0 \in S^\text{Re}$ by the Haydys Correspondence, and $db.\Phi_0=-(\star db).\Phi_0\in S^\text{Re}$ by the preceding paragraph. An identical argument applies to show that $S^\text{Im}$ is preserved as well. The decomposition \refeq{parallelsplittingCase1} therefore satisfies both hypotheses of Definition \refeq{concentratingdiracdef}. 

Writing a configuration $(\ph^\text{Re},\ph^\text{Im}, a)$ in this decomposition, the linearized Seiberg-Witten equations at $(\frac{\Phi_0}{\e}, A_0)$ take the form (\refeq{linearizationasCDO}) where
\be D=\begin{pmatrix} \slashed D_{A_0}^\text{Re} & 0 & 0  \\  0 & \slashed D_{A_0}^{\text{Im}} & 0 \\  0 &0& \bold d \end{pmatrix}\begin{pmatrix} \ph^\text{Re} \\ \ph^\text{Im} \\ a\end{pmatrix} \hspace{2cm}\mathcal A=\begin{pmatrix}0 &0 &0  \\  0&  0 & \gamma(\_){\Phi_0}\\  0 & {\mu(\_, \Phi_0)} & 0   \end{pmatrix}\begin{pmatrix} \ph^\text{Re} \\ \ph^\text{Im} \\  a\end{pmatrix}.\label{SW2spinor}\ee

\noindent{\bf Case (II):} (Flat $\text{SL}(2,\C)$ Connections on $Y^3$). In this case, the $\Z_2$-harmonic spinors are $\Z_2$-harmonic 1-forms, i.e. ones with $E=(\Omega^0\oplus \Omega^1)(\R)$. As in the previous case, the Haydys correspondence gives rise to an isomorphism $S^\text{Re}_{}\simeq E\otimes \ell_0$, which intertwines the connection induced by $A_0$ and the connection on $E\otimes \ell_0$ formed from the Levi-Civitas connection and the unique flat connection with $\Z_2$ holonomy on $\ell_0$. Again, we tacitly refer to the triple $(\mathcal Z_0, A_0, \Phi_0)$ as a $\Z_2$-harmonic 1-form. 

Let $S^\text{Re}, S^\text{Im}$ be defined by (\refeq{SReDef}). This case differs slightly from the previous one because $\gamma(-)\Phi_0: (\Omega^0\oplus \Omega^1)(\frak g_P)\to S^\text{Im}$ now has kernel. Explicitly, $\gamma(a)\Phi_0= [a \wedge \Phi_0]$ and the kernel consists of $\frak g_P$-valued forms whose $\frak g_P$-component is parallel to $\Phi_0$. Define $$\Omega^\text{Re}= \ker(\gamma(-)\Phi_0) \hspace{3cm} \Omega^\text{Im}=\ker(\gamma(-)\Phi_0)^\perp$$
\noindent then the relevant splitting is \be \frak N=S^\text{Re}\oplus \Omega^\text{Re}\hspace{3cm} \frak H=S^\text{Im} \oplus \Omega^\text{Im}. \label{SL2CSplitting}\ee

\noindent As before, we have $S^\text{Re}=\{b.\Phi_0 \ | \ b\in (\Omega^0\oplus \Omega^1)(\R)\}$, and $S^\text{Im}=\{\gamma(a)\Phi_0 \ | \ a \in \Omega^\text{Im} \}$.  The proof that $\nabla_{A_0}$ respects $S^\text{Re}$ follows by the same argument as in Case (i), and differentiating the orthogonality relation $0=\br\ph^\text{Re}, \ph^\text{Im} \kt$ shows $\nabla_{A_0}$ preserves $S^\text{Im}$ as well. In this case, since $S=(\Omega^0\oplus \Omega^1)(\frak g_P)$,  and $\mu(-,\Phi_0)=[-\wedge \Phi_0]=\gamma(-)\Phi_0$ one secretly has $\Omega^\text{Re}=S^\text{Re}$ and likewise for the imaginary components; thus the splitting of the form components, being the same splitting, is also parallel. 

By the description of $S^\text{Im}$ following (\refeq{SL2CSplitting}), the image of $\gamma(-)\Phi_0$ lies in $S^\text{Im}$. Similarly, $\mu(-,\Phi_0)$ has image only in $\Omega^\text{Im}$ since $\br  a^\text{Re}, \mu(\ph,\Phi_0)\kt=\br \gamma(a^\text{Re})\Phi_0, \ph\kt=0$ by definition of $\Omega^\text{Re}$. Writing a configuration $(\ph^\text{Re}, a^\text{Re},\ph^\text{Im}, a^\text{Im})$ in the decomposition  (\refeq{SL2CSplitting}), the linearized Seiberg-Witten equations at $(\frac{\Phi_0}{\e}, A_0)$ take the form (\refeq{linearizationasCDO}) where
\be D=\begin{pmatrix} \slashed D_{A_0}^\text{Re} & 0 &0 & 0  \\  0& \bold d_{A_0}^\text{Re} & 0 & 0 \\  0 & 0 &\slashed D_{A_0}^\text{Im}&0\\ 0 & 0 &0 & \bold d_{A_0}^\text{Im}\end{pmatrix} \begin{pmatrix} \ph^\text{Re} \\ a^\text{Re} \\ \ph^\text{Im} \\  a^\text{Im}\end{pmatrix}\hspace{2cm}\mathcal A=\begin{pmatrix}0 & 0& 0 &0  \\  0 &0 &0 &0 \\ 0 & 0 &0 & \gamma(\_){\Phi_0} \\  0 &0 &{\mu(\_, \Phi_0)} & 0   \end{pmatrix}\begin{pmatrix} \ph^\text{Re} \\ a^\text{Re} \\ \ph^\text{Im} \\  a^\text{Im}\end{pmatrix}\label{FlatSL2C}\ee

\noindent{\bf Case (III):} (Two spinor Seiberg-Witten on $X^4$). The four-dimensional case for the two-spinor Seiberg-Witten equations is virtually identical to the three-dimensional case: let $(S^+)^\text{Re}= \ker(\mu(-,\Phi_0))$ as in (\refeq{SReDef}). Set 

\be \mathfrak N^+= (S^+)^\text{Re} \hspace{3cm}  \frak H^+:=(S^+)^\text{Im} \oplus  \Omega^1(i\R).\label{CaseIIISplitting1}\ee

\noindent Then, let $(S^-)^\text{Re}= \{ \alpha. \ph^\text{Re} \ | \ \alpha \in \Omega^1(\R) \ , \ \ph \in (S^+)^\text{Re}\}$ and likewise for $(S^-)^\text{Im}$, and define 

\be \mathfrak N^-= (S^-)^\text{Re} \hspace{3cm}  \frak H^-:=(S^+)^\text{Im} \oplus  (\Omega^0 \oplus \Omega^2_+)(i\R).\label{CaseIIISplitting2}\ee

\noindent The proof that the fixed degeneracy hypothesis is satisfied carries over {\it mutatis mutandis} from the 3-dimensional case, and the expressions (\refeq{SW2spinor}) are identical regarded as matrices for the splittings (\refeq{CaseIIISplitting1}) and (\refeq{CaseIIISplitting2}).

  \medskip 
  
\noindent{\bf Case (IV):} (Complex $ASD$ Equations on $X^4$). This case is the four-dimensional version of Case (II) in the same way that  Case (IV) is the four-dimensional version of Case (I).

\end{proof}

\begin{rem}
To elaborate on Remark \ref{rem1.6}, the requirement that the splitting be parallel is the  barrier to extending Theorem \ref{mainb} to other generalized Seiberg-Witten equations (e.g. the Seiberg-Witten equations with $r>2$ spinors). In such cases, $\mu^{-1}(0)$ is a not simply the quotient of a vector space by a finite group, and the proof that the splitting is parallel breaks down.  (In such cases, and the limiting Fueter equation has no interpretation as a linear Dirac equation). Analytically, the failure of the splitting to be parallel causes the assertion (\refeq{parallelD}), as it would in general involve terms including the covariant derivative of $q_0$ (cf \cite{DWDeformations} Section 6.2).  
\end{rem}

The next lemma verifies the first two hypotheses of Corollary \ref{cornonlinear} apply in the case of generalized Seiberg-Witten equations. The third hypothesis, i.e. the estimates (\refeq{quadraticestimates}), are the subject of the upcoming Lemmas \ref{nonlinear3d} and \ref{nonlinear4d} in the next section.

\begin{lm}\label{formofQ} 
The generalized Seiberg-Witten equations in Cases (I) --  (VI) can be written in the form \be (D+ \tfrac{1}{\e}\mathcal A)\frak q + Q(\frak q)=f\label{nonlinearDirac} \ee 

\noindent where  $\frak q=(\ph,a)$ and $f$ and $Q$ satisfy the following:  
\begin{itemize}
\item[(i)] $f\in \Gamma(\frak N)$
\item[(ii)] $Q(\frak q)=Q_1(\frak q)\pi_\frak H(\frak q)$ for a linear operator $Q_1$ 
\item[(iii)] $Q_1^\star \sigma_D(\xi)=\sigma_D(\xi)^\star Q_1 $ for all $\xi \in T^\star Y$ (resp. $T^\star X$).  
\end{itemize}
\end{lm}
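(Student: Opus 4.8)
The plan is to read the decomposition off the Taylor expansion of the Seiberg--Witten map. Since $SW(\Psi,A)$ is inhomogeneously quadratic --- it is linear in $\nabla\Psi$ and $dA$ and quadratic in the zeroth-order terms $\gamma(a)\Psi$, $[a\wedge a]$, $\mu(\Psi,\Psi)$ --- writing a configuration as $(\tfrac{\Phi_0}{\e}+\ph,\,A_0+a)$ (and, in dimension three, appending the gauge-fixing slot \eqref{gaugefixing}, which enters linearly) produces an \emph{exact} expansion
$$
SW\!\left(\tfrac{\Phi_0}{\e}+\ph,\,A_0+a\right)=E_0+\mathcal L_{(\Phi_0,A_0,\e)}(\ph,a)+Q(\ph,a),
$$
in which $E_0=SW(\tfrac{\Phi_0}{\e},A_0)$, $\mathcal L_{(\Phi_0,A_0,\e)}=D+\tfrac1\e\mathcal A$ is the linearization already cast in the form \eqref{linearizationasCDO}--\eqref{4.7} by Lemmas \ref{concentrationL} and \ref{fixeddegeneracyL}, and $Q$ collects the purely quadratic terms. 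Setting the left side to zero identifies \eqref{deformationeq} with \eqref{nonlinearDirac} with $f=-E_0$; it then remains to check (i)--(iii) for this $Q$ and $f$, which I would do case by case in parallel with the proof of Proposition \ref{fixeddegeneracyL}.

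For (ii) and (iii) I would first list $Q$ explicitly: its components are $\gamma(a)\ph$ in the Dirac slot, $\tfrac12\star[a\wedge a]+\tfrac12\mu(\ph,\ph)$ (with $F_A^+$ in dimension four) in the curvature slot, and a $[a,a]$-type commutator from $-d_A^\star a$ and $-d_Aa_0$ in the remaining slots; the term $\mu_0(\ph,\ph)$ vanishes identically because the $\mathfrak g_P$-action is skew-adjoint. The key point is that every term of $Q$ is a Clifford product, a bracket, or a moment-map pairing of two copies of $\mathfrak q=(\ph,a)$, and each such pairing vanishes when both arguments lie in $\mathfrak N$: the spinor part $S^{\mathrm{Re}}$ of $\mathfrak N$ is modelled on $E\subseteq\mu^{-1}(0)$, so the polarization of the quadratic form $\mu$ vanishes on $S^{\mathrm{Re}}\times S^{\mathrm{Re}}$, while the form part of $\mathfrak N$ (nonempty only in Cases (II), (IV)) is valued in the abelian subalgebra spanned by the $\mathfrak g_P$-direction of $\Phi_0$, so all brackets of its sections vanish. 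Hence each term of $Q$ carries a factor of $\pi_{\mathfrak H}(\mathfrak q)$, giving a factorization $Q(\mathfrak q)=Q_1(\mathfrak q)\pi_{\mathfrak H}(\mathfrak q)$ with $Q_1$ linear in $\mathfrak q$, which is (ii). For (iii), $Q_1(\mathfrak q)$ has exactly the off-diagonal Clifford-multiplication/moment-map shape of $\mathcal A$ in \eqref{4.7}, with a component of $\mathfrak q$ in place of $\Phi$; since the identities \eqref{usefulid1}--\eqref{usefulid2} hold for an arbitrary spinor, the computation in Lemma \ref{concentrationL} applies verbatim to give $Q_1^\star\sigma_D(\xi)=\sigma_D(\xi)^\star Q_1$.

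For (i) I would evaluate $E_0=SW(\tfrac{\Phi_0}{\e},A_0)$ slot by slot: the Dirac slot is $\tfrac1\e\slashed D_{A_0}\Phi_0=0$ by \eqref{e=0version}; the moment-map contribution to the curvature slot is $\tfrac1{2\e^2}\mu(\Phi_0,\Phi_0)=0$, again by \eqref{e=0version}; and the gauge-fixing slot vanishes at $(\ph,a)=0$. The only surviving term is $\star F_{A_0}$ (resp.\ $F_{A_0}^+$). In the abelian Cases (I) and (III) the limiting connection is flat by Proposition \ref{fixeddegeneracyL}, so $E_0=0\in\Gamma(\mathfrak N)$. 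In Cases (II) and (IV), $F_{A_0}$ need not vanish, but the Haydys relation $\nabla_{A_0}\Phi_0\in\Gamma(S^{\mathrm{Re}})$ together with metric compatibility forces the $\mathfrak g_P$-direction of $\Phi_0$ to be $\nabla_{A_0}$-parallel (as in the proof of Proposition \ref{fixeddegeneracyL}); hence $F_{A_0}$ is valued in the centralizer of that direction, which is precisely the form part $\Omega^{\mathrm{Re}}$ of $\mathfrak N$. Thus $E_0\in\Gamma(\mathfrak N)$ and $f=-E_0\in\Gamma(\mathfrak N)$ in all cases.

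The main obstacle is part (i) in the non-abelian Cases (II) and (IV): one must identify the surviving curvature term as a section of $\mathfrak N$ from the structure of the Haydys splitting alone, rather than invoking $F_{A_0}=0$ --- the vanishing of $F_{A_0}$ is only a \emph{conclusion} of Theorem \ref{mainb}, proved after the concentration estimate of Corollary \ref{cornonlinear} has already been applied, so it is not available here. Carrying out this bookkeeping requires using the explicit descriptions of the splittings in Proposition \ref{fixeddegeneracyL} (e.g.\ \eqref{SL2CSplitting} and its four-dimensional analogue) separately in each of Cases (II) and (IV).
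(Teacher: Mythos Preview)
Your proposal is correct and follows essentially the same approach as the paper: Taylor-expand $SW$ around $(\tfrac{\Phi_0}{\e},A_0)$, identify $f=-E_0=-\star F_{A_0}$, and verify (i)--(iii) case by case using exactly the vanishing mechanisms you name (linearity of $E\subset\mu^{-1}(0)$ for $\mu(\ph^{\mathrm{Re}},\ph^{\mathrm{Re}})=0$, parallelness of the $\mathfrak g_P$-direction of $\Phi_0$ for the bracket terms in Cases (II), (IV), and the computation of Lemma~\ref{concentrationL} for (iii)). The paper's version is more explicit---it writes out the matrices $A_{\mathfrak H}$ and $A_\e$ in each case and cites \cite{DWAssociatives} for $F_{A_0}\in\Omega^{\mathrm{Re}}$ rather than sketching the centralizer argument---but the content is the same; one small inaccuracy is that the gauge-fixing slot \eqref{gaugefixing} is a linear slice condition at $(\Phi_0,A_0)$ and contributes no quadratic term, so the ``$[a,a]$-type commutator from $-d_A^\star a$ and $-d_Aa_0$'' you mention does not actually appear (the genuine $[a\wedge a]$ lives only in the curvature slot, which you already have).
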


\begin{proof}
The (un-renormalized) $\Z_2$-harmonic spinor satisfies the equations $$\slashed D_{A_0}\left(\tfrac{\Phi_0}{\e}\right)=0 \hspace{2cm} \tfrac{1}{2}\tfrac{\mu(\Phi_0,\Phi_0)}{\e^2}=0$$
on $Y-\mathcal Z_0$ (resp. $F_{A_0}^+ , \slashed D_{A_0}^+$ on $X-\mathcal Z_0$). In particular, $(\tfrac{\Phi_0}{\e}, A_0)$ fails to solve the Seiberg-Witten equations on $Y-\mathcal Z_0$ (resp. $X-\mathcal Z_0$) by $\star F_{A_0}$ (resp. $F_{A_0}^+$). 
 Thus $(\tfrac{\Phi_0}{\e}, A_0)+(\ph,a)$ satisfies the Seiberg-Witten equations if and only if $(\ph,a)$ satisfies the deformation equation (cf. (\refeq{deformationeq}) \be \Big(\mathcal L_{\left(\tfrac{\Phi_0}{\e}, A_0\right)}+ Q\Big) (\ph,a)= -E_0. \label{nonlineardeformation}\ee
\noindent where $Q(\ph,a)=(\gamma(a)\ph \ , \  \mu(\ph,\ph))$, and $E_0=-\star F_{A_0}$ (resp. $-F_{A_0}^+$). The Haydys Correspondence implies that $F_{A_0}\in \Omega^\text{Re}$  (see Appendix C of \cite{DWAssociatives}
), hence $E_0\in \Gamma(\mathfrak N)$ and statement (i) is satisfied. The proof now proceeds in each of the cases individually. 

\medskip

\noindent{\bf Case (I):} (Two Spinor  Seiberg-Witten on $Y^3$). As in  Case (I) in the proof of Lemma \ref{fixeddegeneracyL}, Clifford multiplication by $\R$-valued forms preserves the splitting $S=S^\text{Re}\oplus S^\text{Im}$ while Clifford multiplication by $\frak g_P=i\R$-valued forms reverses it. Moreover, in this case, the Haydys Correspondence implies that $F_{A_0}=0$. Thus in the splitting (\refeq{parallelsplittingCase1}) of Lemma \ref{fixeddegeneracyL} the non-linear deformation equation (\refeq{nonlineardeformation}) on triples $(\ph^\text{Re},\ph^\text{Im},a)$ takes the form 

\begin{eqnarray}
\underbrace{\begin{pmatrix}
\slashed D^\text{Re}_{A_0}\ph^{\text{Re}}  \\ 
\slashed D^\text{Im}_{A_0}\ph^{\text{Im}}  +\gamma(a)\frac{\Phi_0}{\e}\\
\bold d a   \ + \  \tfrac{\mu(\ph^\text{Im}, \Phi_0)}{\e}
\end{pmatrix}}_{\mathcal L_{(\Phi_0, A_0})}
 \ \ +  \ \ 
 \underbrace{\begin{pmatrix}
 \gamma(a)\ph^{\text{Im}} \\
 \gamma(a)\ph^{\text{Re}} \\
 2\mu(\ph^{\text{Im}},\ph^{\text{Re}}
 \end{pmatrix}}_{Q(\ph,a)} \ \ = \ \ \begin{pmatrix} 0 \\ 0 \\ 0 \end{pmatrix}.
 \label{3equationversion}
 \end{eqnarray}

\noindent Note that each term of $Q(\ph,a)$  contains at least a linear factor in $(\ph^\text{Im},a)$ which is the assertion of statement (ii) of the lemma. The equations can therefore be written as: 

\begin{eqnarray}
\left( \begin{pmatrix} \slashed D_{A_0}^\text{Re}   & 0 &0  \\  0 & \slashed D_{A_0}^\text{Im} & 0  \\  0 & 0 & \bold d
 \end{pmatrix} +  \frac{1}{\e}\begin{pmatrix}  0  &  0 & \gamma(\_)\e\ph^\text{Im} \\  0 &  0 & \gamma(\_)(\Phi_0 + \e \ph^{\text{Re}} )\\  0& \mu(\_ , \Phi_0 + \e\ph^\text{Re})&  0
  \end{pmatrix} \right)\begin{pmatrix} \ph^\text{Re} \\ \ph^{\text{Im}} \\ a\end{pmatrix} =0.\label{nonlinearconcentrating}
\end{eqnarray} 

\noindent which has the form  (\refeq{nonlinearDirac}). Equivalently, the equation has been recast as a concentrating Dirac operator with $\mathcal A$ in the form (\refeq{fixeddegeneracy2}) with 
\be \begin{pmatrix} 0 &0  \\ 0 & A_{\mathfrak H}\end{pmatrix}=\begin{pmatrix}  0 & 0& 0 \\ 0 & 0& \gamma(\_)\Phi_0 \\ 0 & \mu(\_,\Phi_0) &0  \end{pmatrix}  \hspace{2cm} A_\e=\begin{pmatrix}  0 & 0& \gamma(\_)\e \ph^\text{Im} \\ 0 & 0& \gamma(\_) \e \ph^\text{Re}\\ \mu(\_, \e \ph^\text{Im}) & \mu(\_,\e \ph^\text{Im}) &0  \end{pmatrix}.\label{Aepsilon1}\ee

\noindent where $A_\frak H$ is the lower $2\times 2$ block. In this form, it is now obvious that item (iii) holds by Lemma \ref{concentrationL}, (as the proof of the commutation relation applied for any spinor).  
\medskip 

\noindent{\bf Case (II):} (Flat $\text{SL}(2,\C)$ Connections on $Y^3$) The only salient difference between this case and the previous one is that there is an additional non-linear term $a\wedge a$ arising from the non-abelianness of the connection. In this case, item (ii) of the lemma follows from the following observation: since $a \in \Omega^\text{Re}, \ph \in S^\text{Re}$ means that that $a,\ph$ are $\frak g_P$-valued 1-forms whose Lie algebra component is parallel to $\Phi_0$, the commutator $\gamma(a^\text{Re})\ph^\text{Re}=[a^\text{Re}\wedge \ph^\text{Re}]=0$. Likewise, $\mu(\ph^\text{Re}, \ph^\text{Re})=[\ph^\text{Re}\wedge \ph^\text{Re}]=0$ and $a^\text{Re} \wedge a^\text{Re}=0$. Consequently,   (\refeq{nonlineardeformation}) in this case takes the form

\begin{eqnarray}
\slashed D_{A_0}^\text{Re}\ph^{\text{Re}}   \hspace{1.0cm} \ + \ \hspace{1.75cm} \    \Pi^\text{Re} (\gamma(a^\text{Re})\ph^\text{Im}  \ + \ \gamma(a^\text{Im})\ph^\text{Re} \ + \ \gamma(a^\text{Im})\ph^\text{Im}  )&=& 0\nonumber \\ 
\bold d_{A_0}^\text{Re}a^{\text{Re}}   \hspace{1.0cm} \ + \  \Pi^\text{Re} (2\mu(\ph^\text{Re},\ph^\text{Im}) \ + \ \mu(\ph^\text{Im}, \ph^\text{Im}) \  +  \   (a^\text{Re}+ a^\text{Im})\wedge (a^\text{Im} )  & = & -\star F_{A_0} \nonumber \\ 
\slashed D_{A_0}^\text{Im}\ph^{\text{Im}} +  \gamma(a)\tfrac{\Phi_0}{\e}  \ \ \ + \ \hspace{1.75cm}   \   \Pi^\text{Im} (\gamma(a^\text{Re})\ph^\text{Im}  \ + \ \gamma(a^\text{Im})\ph^\text{Re} \ + \ \gamma(a^\text{Im})\ph^\text{Im}  )&=& 0 \nonumber \\ 
\underbrace{\bold d_{A_0}^\text{Im}a^{\text{Im}}  +  \tfrac{\mu(\ph^\text{Im},\Phi_0)}{\e}}_{\mathcal L_{(\Phi_0, A_0)}} \ + \  \underbrace{\Pi^\text{Im} (2\mu(\ph^\text{Re},\ph^\text{Im}) \ + \ \mu(\ph^\text{Im}, \ph^\text{Im}) \  +  \   (a^\text{Re}+ a^\text{Im})\wedge (a^\text{Im} ) }_{Q(\ph,a)} & = & 0.
\label{3equationversion}
\end{eqnarray}

\noindent Thus, with $ A_\frak H$ being the lower block of (\refeq{FlatSL2C}) and 

\be
A_\e =\begin{pmatrix} 
0 & \Pi^\text{Re}(\gamma(\_) \e\ph^\text{Im})& 0& \Pi^\text{Re}(\gamma(\_) \e\ph) \\ 
 \Pi^\text{Re}(\mu(\_, \e\ph^\text{Im})) & \Pi^\text{Re}(\_ \wedge \e a )&  \Pi^\text{Re}(\mu(\_, \e\ph)) & 0  \\ 
0 & \Pi^\text{Im}(\gamma(\_) \e\ph^\text{Im})& 0&  \Pi^\text{Im}(\gamma(\_) \e\ph) \\ 
 \Pi^\text{Im}(\mu(\_, \e\ph^\text{Im}))  & 0&  \Pi^\text{Im}(\mu(\_, \e\ph)) & \Pi^\text{Im}(\_  \wedge \e a ) 
\end{pmatrix} \begin{pmatrix} \ph^\text{Re} \\ a^\text{Re} \\ \ph^\text{Im} \\ a^\text{Im} \end{pmatrix}
\label{Aepsilon2}
\ee
the operator has the desired form. Item (iii) of the lemma follows identically to the previous case. 
\medskip 

\noindent{\bf Cases (III) -- (IV):} These cases are analogous to Cases (I) and (II) in the same way as Lemma \ref{fixeddegeneracyL}. 
\medskip

\end{proof}
 
\section{Bootstrapping}
\label{section5}
The convergence $\Phi_i\overset{L^{2,2}} \rightharpoonup \Phi_0$ and $A_i\overset{L^{1,2}} \rightharpoonup A_0$ obtained in Theorem \ref{compactness} cannot be naively bootstrapped to higher Sobolev spaces using elliptic estimates in the standard way. Indeed, if  it is known that  $A_i \to A_0, \Phi_i \to \Phi_0$ in $L^{k,2}$ on a compact subset $K'\Subset Y-\mathcal Z_0$, or equivalently using the notation of (\refeq{deformationeq}), $a_i \ , \ \e_i\ph_i\to 0$ in $L^{k,2}$. Recalling that Theorem \ref{compactness} asserts convergence for the renormalized spinor $\Phi_i=\Phi_0+\e_i\ph_i$, then the elliptic estimate on a compact subsets $K\Subset K'\Subset Y-\mathcal Z_0$ reads: 

$$\|a_i\|_{L^{k+1,2}(K)} \leq C_k \left(\frac{1}{\e_i^2} \|\mu(\Phi_0, \e_i\ph_i) + \mu(\e_i\ph_i, \e_i \ph_i)\|_{L^{k,2}(K')} + \|a\|_{L^{k,2}(K')}\right). $$

\noindent Because of the factor of $\e_i^{-2}$ on the right hand side, convergence $a_i\to 0$ in  $L^{k+1,2}$ does not follow. Concluding convergence in this way requires knowing that $\ph_i\to 0$ in $L^{k,2}$ at least as fast as $\e_i^2$. The exponential convergence furnished by Corollary \ref{cornonlinear} overcomes this issue and allows the bootstrapping to proceed.

Lemmas \ref{concentrationL} and \ref{fixeddegeneracyL} show that the assumptions for Theorem \ref{maina} are satisfied for the linearized Seiberg-Witten equations in all case (I)--(IV). The next two lemmas show that the non-linear terms satisfy the final assumption  \refeq{quadraticestimates} the necessary to apply Corollary \ref{cornonlinear} in the 3-dimensional and 4-dimensional cases respectively. First, we note the following fact:

\begin{lm}\label{FA0bound} Suppose $(\Phi_i, A_i)\to (\mathcal Z_0, \Phi_0, A_0)$ is a sequence of generalized Seiberg-Witten solutions converging to a $\Z_2$-harmonic spinor in the sense of Theorem \ref{compactness}. The limiting connection $A_0$ in dimensions $n=3$ and $n=4$ satisfies $$ \|F_{A_0}\|_{C^2(K')}<\infty. $$ 
\end{lm}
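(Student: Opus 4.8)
The plan is to establish the stronger assertion that $F_{A_0}$ is smooth on $Y-\mathcal Z_0$ (resp.\ $X-\mathcal Z_0$); the claimed $C^2$-bound over a compact set $K'$ is then automatic. In Cases (I) and (III) there is nothing to prove: as recorded in the proofs of Lemma~\ref{fixeddegeneracyL} and Lemma~\ref{formofQ}, the Haydys correspondence forces the limiting connection to be flat with holonomy in $\Z_2$ on $Y-\mathcal Z_0$ (resp.\ $X-\mathcal Z_0$), so $F_{A_0}\equiv 0$ there. All the content of the lemma lies in Cases (II) and (IV), which I treat next.

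For Cases (II) and (IV), the key structural input is furnished by Theorem~\ref{compactness}: the limiting connection has harmonic curvature, $d_{A_0}F_{A_0}=d_{A_0}^\star F_{A_0}=0$, so in particular $A_0$ solves the Yang--Mills equation on $P|_{Y-\mathcal Z_0}$ (resp.\ $P|_{X-\mathcal Z_0}$). The convergence $A_i\to A_0$ in $L^{1,2}_{loc}$ from the same theorem gives $A_0\in L^{1,2}_{loc}$, and hence $F_{A_0}\in L^2_{loc}$ on $Y-\mathcal Z_0$ (resp.\ $X-\mathcal Z_0$) --- the quadratic term $A_0\wedge A_0$ is controlled by the Sobolev embedding $L^{1,2}\hookrightarrow L^{2n/(n-2)}$. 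Since $\mathcal Z_0$ is taken to contain the Uhlenbeck bubbling locus in the non-abelian four-dimensional case, no curvature concentration survives away from $\mathcal Z_0$, and so by absolute continuity of the integral one can, for each $y$ in a compact neighbourhood of $K'$, choose a ball $B_r(y)\Subset Y-\mathcal Z_0$ (resp.\ $X-\mathcal Z_0$) on which $\|F_{A_0}\|_{L^2}$ is below the dimensional $\varepsilon$-regularity threshold for Yang--Mills connections. Uhlenbeck's $\varepsilon$-regularity theorem then produces, on each such ball, a Coulomb gauge in which $A_0$ --- hence $F_{A_0}$ --- is smooth with interior estimates; as curvature is gauge-invariant, this shows $F_{A_0}\in C^\infty_{loc}(Y-\mathcal Z_0)$ (resp.\ $C^\infty_{loc}(X-\mathcal Z_0)$), and therefore $\|F_{A_0}\|_{C^2(K')}<\infty$.

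The one delicate point --- and what I expect to be the main obstacle --- is the weak regularity supplied by Theorem~\ref{compactness}: since $A_0$ is a priori only $L^{1,2}_{loc}$, the Yang--Mills equation must be interpreted distributionally, and one cannot directly invoke interior estimates for a semilinear elliptic system with continuous coefficients; passing to a good gauge via Uhlenbeck's theorem is precisely what repairs this, after which the iteration to $C^\infty$ (hence to any $C^k$-bound on compacta) is standard. A gauge-free alternative in Cases (II) and (IV) goes through the Haydys correspondence instead. There $\Phi_0$ satisfies the \emph{linear} Dirac equation $\slashed D_{\mathcal Z_0}\Phi_0=0$ of Definition~\ref{Z2Harmonic}, whose coefficients (the connection on the Clifford module $E$ and the flat $\Z_2$-connection on $\ell_0$) are smooth on $Y-\mathcal Z_0$, so $\Phi_0\in L^{2,2}_{loc}$ bootstraps to $\Phi_0\in C^\infty_{loc}$; the real line subbundle $\mathcal L\subset\mathfrak g_P$ in which $F_{A_0}$ takes values (the relation $F_{A_0}\in\Omega^{\text{Re}}$ noted in the proof of Lemma~\ref{formofQ}) is then smooth and inherits a flat metric connection with holonomy in $\Z_2$, along which $d_{A_0}$ and $d_{A_0}^\star$ reduce to the ordinary $d$ and $d^\star$; the harmonicity relations of Theorem~\ref{compactness} now say that $F_{A_0}$ is a harmonic $2$-form on $Y-\mathcal Z_0$, whence $F_{A_0}\in C^\infty_{loc}$ by hypoellipticity of the Hodge Laplacian.
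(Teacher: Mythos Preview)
Your proposal is correct, and your second (``gauge-free'') argument for Cases (II) and (IV) is precisely the paper's proof: use the Haydys correspondence to see $F_{A_0}\in\Omega^{\text{Re}}$, identify $\Omega^{\text{Re}}\simeq E\otimes_\R\ell_0$ so that the restriction of $A_0$ to this subbundle is (up to this isomorphism) the smooth flat $\Z_2$-connection, and then bootstrap the harmonicity relation $(d_{A_0}+d_{A_0}^\star)F_{A_0}=0$ as a linear elliptic equation with smooth coefficients. Your first argument, via Uhlenbeck $\varepsilon$-regularity for Yang--Mills connections, is a genuinely different and also valid route: it replaces the problem-specific observation that $F_{A_0}$ lives in an effectively abelian subbundle (on which the induced connection is already smooth) with a black-box gauge-fixing and regularity theorem. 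The paper's approach is more self-contained and sidesteps the borderline Sobolev issues of Uhlenbeck's Coulomb gauge in dimension $4$; your Uhlenbeck argument, by contrast, would survive in settings where $\Omega^{\text{Re}}$ is not rank one or the Haydys structure is unavailable, so it is the more portable of the two. One small redundancy: in your second argument you first bootstrap $\Phi_0$ to $C^\infty_{loc}$ in order to conclude that the line subbundle $\mathcal L$ is smooth, but the paper bypasses this by invoking the isomorphism $\Omega^{\text{Re}}\simeq E\otimes_\R\ell_0$ directly---the smoothness of the bundle and its flat $\Z_2$-connection is part of the Haydys package and does not require prior regularity of $\Phi_0$.
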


\begin{proof} In Cases (I) and (III), the Haydys Correspondence implies that $F_{A_0}=0$, and the result is immediate. 

In Cases (II) and (IV), the limiting curvature $F_{A_0}$ in Theorem \ref{compactness} is harmonic, i.e. \be (d_{A_0}+d_{A_0}^\star)F_{A_0}=0 \label{harmoniccurvature}\ee (in fact, Remark 1.37 of \cite{WalpuskiZhangCompactness} shows $F_{A_0}$ vanishes in Case (II)). Because $A_0$ is only known to be $L^{1,2}$, the coefficients of (\refeq{harmoniccurvature}) are too rough to initiate a standard bootstrapping argument. However, there is more information in this case: the Haydys Correspondence with stabilizers (see Appendix C of \cite{DWAssociatives}) shows that $F_{A_0}\in \Omega^\text{Re}$. Since $A_0$ respects the splitting $\Omega^\text{Re}\oplus \Omega^\text{Im}$ by Proposition \ref{fixeddegeneracyL}, (\refeq{harmoniccurvature}) restricts to an equation on the smooth bundle $\Omega^\text{Re}\to Y-\mathcal Z_0$ (resp. $X-\mathcal Z_0$). 

As in the proof of Proposition \ref{fixeddegeneracyL}, one has $\Omega^\text{Re} \simeq S^\text{Re}\simeq E \otimes_\R \ell_0$, where $E=\Lambda^1(\R)$ is a rank 4 real Clifford module on $Y$ (resp $X$), and $\ell_0 \to Y-\mathcal Z_0$ (resp. $X-\mathcal Z_0$) is real line bundle. Moreover, (cf. the discussion after Definition \ref{Z2Harmonic}), the restriction of $A_0$ viewed as a connection on $E\otimes_\R \ell_0$ via this isomorphism is the connection arising from a smooth spin connection on $E$ and the unique flat connection with holonomy in $\Z_2$ on $\ell_0$. Thus (\refeq{harmoniccurvature}) becomes $$(d_\Gamma + d_\Gamma^\star)F_{A_0}^\text{Re}=0$$
\noindent where $d_\Gamma$ is (up to isomorphism), the exterior covariant derivative on $\Lambda^\bullet (\R)\otimes \ell_0$. By elliptic regularity, and since $F_{A_0}=F_{A_0}^\text{Re}$, one has $\|F_{A_0}\|_{L^{k,2}}\leq C_k \|F_{A_0}\|_{L^2}<\infty$ for every $k$, and the conclusion follows from the Sobolev embedding theorem.

\end{proof}

\bigskip 

\begin{lm} \label{nonlinear3d}Let $Y$ be a closed, oriented 3-manifold and suppose that $(\Phi_i, A_i, \e_i)\to (\mathcal Z_0, A_0,\Phi_0)$ is a sequence of solution to (\refeq{bSW1}--\refeq{bSW3}) converging to a $\Z_2$-harmonic spinor in the sense of Theorem \ref{compactness} in Case (I) or Case (II). Let $\Phi_i=\Phi_0 + \e_i \ph_i$ and $A_i = A_0 + a_i$. Then \be \|A_\e\|_{L^{1,3}(K)} \to 0  \hspace{1cm}\text{and }\hspace{1cm} \|A_\e\|_{C^{0}(K)} \to 0\label{EqLem6.2}\ee
on compact subsets $K\Subset Y-\mathcal Z_0$, where $A_\e$ is the matrix (\refeq{Aepsilon1}, \refeq{Aepsilon2}) from the proof of Lemma \ref{formofQ}.  
\end{lm}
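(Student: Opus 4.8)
I would first observe that in the matrices $(\refeq{Aepsilon1})$ and $(\refeq{Aepsilon2})$ every entry of $A_\e$ is one of the algebraic bundle maps $\gamma(\,\cdot\,)$, $\mu(\,\cdot\,,\,\cdot\,)$, $\wedge$, $\Pi^{\text{Re}}$, $\Pi^{\text{Im}}$ applied against $\e_i\ph_i=\Phi_i-\Phi_0$ or --- only in Case (II) --- against $\e_i a_i:=\e_i(A_i-A_0)$. On a compact $K'\Subset Y-\mathcal Z_0$ the data $A_0$, $\Phi_0$ and the splitting $\frak N\oplus\frak H$ are smooth (Haydys Correspondence together with Lemma \ref{FA0bound}, exactly as in the proof of Lemma \ref{fixeddegeneracyL}; in particular $\Phi_0$ is smooth and non-vanishing on $Y-\mathcal Z_0$ so $S^{\text{Re}}=\ker\mu(\,\cdot\,,\Phi_0)$ is a smooth subbundle), so these bundle maps and the projections $\Pi^{\text{Re}},\Pi^{\text{Im}}$ have $C^1(K')$-coefficients bounded uniformly in $i$. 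Hence $\|A_\e\|_{L^{1,3}(K)}\le C\bigl(\|\Phi_i-\Phi_0\|_{L^{1,3}(K')}+\|\e_i(A_i-A_0)\|_{L^{1,3}(K')}\bigr)$ and likewise with $L^{1,3}$ replaced by $C^0$, the second term being absent in Cases (I), (III). So the lemma reduces to proving $\Phi_i-\Phi_0\to 0$ in $L^{1,3}_{loc}$ and $C^0_{loc}$, and (Case (II)) $\e_i(A_i-A_0)\to 0$ in the same norms.

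\textbf{The spinor term.} Subtracting $\slashed D_{A_i}\Phi_i=0$ from $\slashed D_{A_0}\Phi_0=0$ gives $\slashed D_{A_0}(\Phi_i-\Phi_0)=-\gamma(A_i-A_0)\Phi_i$. By Theorem \ref{compactness}, $A_i-A_0\to 0$ in $L^{1,2}_{loc}$ while $\Phi_i$ is bounded in $L^\infty_{loc}\cap L^{1,3}_{loc}$ (the former from $|\Phi_i|\to|\Phi_0|$ in $C^{0,\alpha}$, the latter from the $L^{2,2}_{loc}$-bound), so the right-hand side tends to $0$ in $L^{1,2}_{loc}$. Elliptic regularity for the smooth-coefficient operator $\slashed D_{A_0}$ on $K'$ then gives $\|\Phi_i-\Phi_0\|_{L^{2,2}(K)}\to 0$, and since $L^{2,2}\hookrightarrow L^{1,3}\cap C^{0,\alpha}$ in dimension $3$ this yields the required convergence. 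This settles Cases (I) and (III), where $A_\e$ does not involve $a_i$.

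\textbf{The connection term (Case (II)).} Set $a_i=A_i-A_0$, $b_i=\e_i a_i$, and recall $F_{A_0}=0$ (Lemma \ref{FA0bound}, Case (II)). Since $A_i\to A_0$ in $L^{1,2}_{loc}$ the connections $A_i$ are bounded in $L^{1,2}_{loc}$, hence $\|F_{A_i}\|_{L^2(K')}\le C$ uniformly; combined with $\star\e_i^2 F_{A_i}=-\tfrac12\mu(\Phi_i,\Phi_i)$ this gives the crucial gain $\|\mu(\Phi_i,\Phi_i)\|_{L^2(K')}\le C\e_i^2$. From $F_{A_0}=0$ one has $d_{A_0}b_i=\e_i F_{A_i}-\tfrac1{2\e_i}[b_i\wedge b_i]$, and the gauge-fixing condition $(\refeq{gaugefixing})$ gives $d_{A_0}^\star b_i=\e_i^{-1}\mu_0\bigl(\Pi^{\text{Im}}(\Phi_i-\Phi_0),\Phi_0\bigr)$, using that $\mu_0(\Pi^{\text{Re}}(\,\cdot\,),\Phi_0)=0$. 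For $d_{A_0}b_i$: interpolating in dimension $3$, $\|\mu(\Phi_i,\Phi_i)\|_{L^3}\le\|\mu(\Phi_i,\Phi_i)\|_{L^2}^{1/2}\|\mu(\Phi_i,\Phi_i)\|_{L^6}^{1/2}$, where $\mu(\Phi_i,\Phi_i)=2\mu(\Phi_0,\Phi_i-\Phi_0)+\mu(\Phi_i-\Phi_0,\Phi_i-\Phi_0)$ (as $\mu(\Phi_0,\Phi_0)=0$) together with the $C^0\cap L^6$-convergence of $\Phi_i-\Phi_0$ from the previous step shows $\|\mu(\Phi_i,\Phi_i)\|_{L^6(K')}\to 0$; hence $\|\e_i F_{A_i}\|_{L^3(K')}=\tfrac1{2\e_i}\|\mu(\Phi_i,\Phi_i)\|_{L^3(K')}\to 0$, and $\|\tfrac1{2\e_i}[b_i\wedge b_i]\|_{L^3}=\tfrac{\e_i}2\|[a_i\wedge a_i]\|_{L^3}\le C\e_i\|a_i\|_{L^6}^2\to 0$, so $\|d_{A_0}b_i\|_{L^3(K')}\to 0$. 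For $d_{A_0}^\star b_i$ I would prove the auxiliary bound $\|\Pi^{\text{Im}}(\Phi_i-\Phi_0)\|_{L^3(K')}=o(\e_i)$ as a ``baby'' concentration estimate: apply the fiberwise lower bound $|\mu(v,\Phi_0)|\ge\Lambda_{K'}|v|$ for $v\in S^{\text{Im}}$ (valid since $\inf_{K'}|\Phi_0|>0$) to the curvature and gauge-fixing equations, using that $\mu$ vanishes identically on $S^{\text{Re}}\times S^{\text{Re}}$ (so every quadratic term in $\mu(\Phi_i,\Phi_i)$ carries a $\Pi^{\text{Im}}$-factor), to obtain $\|\Pi^{\text{Im}}(\Phi_i-\Phi_0)\|_{L^2(K')}\le C\e_i^2+\eta_i\|\Pi^{\text{Im}}(\Phi_i-\Phi_0)\|_{L^2(K')}$ with $\eta_i\to0$; absorb, then interpolate with the bounded $L^6$-norm to pass to $L^3$. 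Finally the elliptic estimate $\|b_i\|_{L^{1,3}(K)}\le C\bigl(\|d_{A_0}b_i\|_{L^3(K')}+\|d_{A_0}^\star b_i\|_{L^3(K')}+\|b_i\|_{L^3(K')}\bigr)$ together with $\|b_i\|_{L^3(K')}\le C\|b_i\|_{L^{1,2}(K')}\to 0$ gives $\|b_i\|_{L^{1,3}(K)}\to 0$, and $L^{1,3}\hookrightarrow C^0$ in dimension $3$ gives $\|b_i\|_{C^0(K)}\to 0$.

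\textbf{Expected main obstacle.} The spinor step is routine; the real work is the Case (II) connection step, and within it the auxiliary $L^2$-bound on $\Pi^{\text{Im}}(\Phi_i-\Phi_0)$. The difficulty is that a direct estimate picks up quadratic terms $\|\Phi_i-\Phi_0\|^2$ for which Theorem \ref{compactness} provides no convergence rate, so dividing by $\e_i$ threatens to be fatal (this is the same pathology described in Section \ref{section5}); the resolution is to turn the estimate into an absorbing inequality by exploiting the vanishing of $\mu$ on $S^{\text{Re}}\times S^{\text{Re}}$ --- which forces a $\Pi^{\text{Im}}$-factor into every offending term --- combined with the plain $o(1)$ higher-integrability convergence obtained in the spinor step. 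This is precisely the structure the hypotheses of Corollary \ref{cornonlinear} are set up to capture.
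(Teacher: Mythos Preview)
Your spinor step is essentially the paper's argument (the paper simply quotes the $L^{2,2}_{loc}$ convergence from Theorem \ref{compactness} and invokes Sobolev embedding; your elliptic-regularity derivation is a legitimate way to upgrade the weak convergence to strong). For Case (II) your strategy is genuinely different: the paper never isolates $\Pi^{\text{Im}}(\Phi_i-\Phi_0)$ at all, but instead applies the elliptic estimate for $\bold d_{A_0}$ directly to $a_i$ (not $\e_i a_i$), accepts the $\e_i^{-2}$ factors, and bootstraps through $L^{1,3}\to L^{2,2}\to L^{2,5/2}$ to obtain $\e_i^2 a_i$ bounded in $L^{2,5/2}$, then interpolates against $a_i\to 0$ in $L^{6-\delta}$ to land in $L^{1,p}$ for some explicit $p>3$. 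Your ``baby concentration'' idea---using $\mu\equiv 0$ on $S^{\text{Re}}\times S^{\text{Re}}$ to extract $\|\Pi^{\text{Im}}(\Phi_i-\Phi_0)\|_{L^2}=O(\e_i^2)$ directly from the curvature equation---is more conceptual and would give a cleaner proof if it closed.

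There are, however, two genuine gaps. The decisive one is the final sentence: $L^{1,3}$ does \emph{not} embed into $C^0$ in dimension $3$; this is the borderline case ($W^{1,n}\hookrightarrow\mathrm{BMO}$, not $L^\infty$). This is precisely why the paper works so hard to reach $L^{1,p}$ for $p=\tfrac{60}{17}-\delta'>3$. Your interpolation arguments only hit $L^3$ on the nose, and pushing them to $p>3$ runs into trouble: writing $\|\e_i F_{A_i}\|_{L^p}\le C\e_i^{-1}\|\mu(\Phi_i,\Phi_i)\|_{L^2}^\theta\|\mu(\Phi_i,\Phi_i)\|_{L^6}^{1-\theta}$ with $\theta<\tfrac12$ for $p>3$ gives a net factor $\e_i^{2\theta-1}\to\infty$ against an $L^6$-factor whose decay rate is uncontrolled, so the argument does not close without an additional step (e.g.\ a second-order bootstrap as in the paper, or a sharper $L^q$-bound on $\Pi^{\text{Im}}(\Phi_i-\Phi_0)$ for $q>2$).

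The second issue is the invocation of the gauge-fixing condition (\ref{gaugefixing}). In the paper's logical order, that gauge is imposed only \emph{after} this lemma: Proposition \ref{exponentialdecay} explicitly uses the regularity established here to apply \cite[Proposition 6.1]{DWDeformations} and put $(\ph_i,a_i)$ into the gauge (\ref{gaugefixing}). Using it in the proof of Lemma \ref{nonlinear3d} is therefore circular as written. (One can likely work instead in whatever Coulomb-type gauge underlies Theorem \ref{compactness}, in which case $d_{A_0}^\star a_i=0$ and this term vanishes; but that should be stated and justified, not imported from (\ref{gaugefixing}).)
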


\begin{proof}  In Case (I), the matrix $A_\e$ of (\refeq{Aepsilon1}) contains only terms involving $\e_i\ph_i$. In this case, the conclusion is immediate from Theorem \ref{compactness} and the Sobolev embedding; indeed, the conclusion of Theorem \ref{compactness} shows that $\e_i\ph_i= \Phi_i - \Phi_0 \to 0$ in $L^{2,2}$. In particular, $\e_i \ph_i\to 0$ in $L^{1,6}$, thus {\it a fortiori} in $L^{1,3}$ and in $C^0$ by the embedding $C^{0,\alpha}\hookrightarrow L^{1,6}$.  In Case (II), the matrix $A_\e$ from (\refeq{Aepsilon2})  includes entries involving $\e_i \ph_i$, which converge as above, and also entries of the form $\e_i a_i$. To prove the lemma, it therefore suffices to show that $\|\e_i a_i\|_{L^{1,p}}\to 0$ for any $p>3$.

For the remainder of the proof, the subscript $i$ is kept implicit in the notation. Letting $\psi=\e \ph$ be the re-normalized deformation of the spinor, the curvature equation for the deformation $a$ reads:  

\be \star F_{A_0}+\bold d_{A_0} a =\frac{1}{\e^2}\mu(\Phi_0+\psi, \Phi_0 + \psi)- a\wedge a .\label{equationforelliptic}\ee

\noindent where $\bold d_{A_0}$ is the Dirac operator from Example \ref{case2eg}. The elliptic estimate for $\bold d_{A_0}$ applied to $a$ yields 
\begin{eqnarray}
\|a\|_{L^{1,p}}&\leq& C_{k,p} \left(\frac{1}{\e^2}\|\mu(\Phi_0, \psi)\|_{L^{p}}+\frac{1}{\e^2}\|\mu(\psi, \psi)\|_{L^{p}} + \|a\wedge a\|_{L^{p}} + \|F_{A_0}\|_{L^p}+ \|a\|_{L^2} \right)\label{elliptic1}
\end{eqnarray}

\noindent holds for $p\geq 2$. Differentiating (\refeq{equationforelliptic}) and commuting covariant derivatives yields the estimate \begin{eqnarray}
\|\nabla_{A_0}a\|_{L^{1,p}}&\leq& C_{k,p} \Big(\frac{1}{\e^2}\|\nabla_{A_0}\mu(\Phi_0, \psi)\|_{L^{p}}+\frac{1}{\e^2}\|\nabla_{A_0}\mu(\psi, \psi)\|_{L^{p}} + \|\nabla_{A_0}a\wedge a\|_{L^{p}} \nonumber \\  & & \ \  \  \ + \  \|\nabla_{A_0}F_{A_0}\|_{L^p}  \ + \  \| \ [F_{A_0}\wedge a] \ \|_{L^p} + \|a\|_{L^p} \Big)\label{elliptic3}.
\end{eqnarray}

\noindent for the covariant derivative (commuting covariant derivatives gives rise to the curvature term $[F_{A_0}\wedge a]$ along with a bounded Riemannian curvature term which has been absorbed into the final term of (\refeq{elliptic3})).

 Now we bootstrap. To begin, we know that $\Phi_0 \in L^{2,2}$, $\psi=\e \ph\to 0$  in  $L^{2,2}$, and $a\to 0$ in $L^{1,2}$.  

\noindent {\it Step 0:} By the Sobolev embeddings $L^{1,2}\hookrightarrow L^{6}$, and $L^{2,2}\hookrightarrow L^{2,2}$, the following quantities are bounded 

\indent \indent \  uniformly in $\e$:  $\|a\|_{L^6}$ \ , \ $\|\mu(\Phi_0, \psi)\|_{L^3}$ \ , \ $\|\mu(\psi,\psi)\|_{L^3}$. Additionally, $\|F_{A_0}\|_{L^3}$ is bounded by 

\indent \indent \ Lemma \refeq{FA0bound}. 

\noindent {\it Step 1:} Apply the elliptic estimate (\refeq{elliptic1}) with $p=3$ to conclude that $\|\e^2a\|_{L^{1,3}}$ is bounded in $L^{1,3}$. 

\noindent {\it Step 2:} By H\"older's inequality with $p=3/2$ and $q=3$,  $$\|\nabla_{A_0}a\wedge a\|_{L^2} \leq \|a\|_{L^{1,3}} \|a\|_{L^6} $$

\indent  \indent  \ \ is bounded. Likewise for $\|\mu(\nabla \Phi_0, \psi) \|_{L^2},\|\mu( \Phi_0, \nabla \psi) \|_{L^2}, \|\mu(\nabla \psi, \psi) \|_{L^2}$ using the bounds on

\indent \indent \ \  $\Phi_0, \psi$ from Theorem \ref{compactness}. 

\noindent {\it Step 3:} Using Lemma \ref{FA0bound} to bound the terms involving $F_{A_0}$, apply the elliptic estimate (\refeq{elliptic3}) with  \indent \indent \ \  $p=2$ to conclude that $\|\e^2a\|_{L^{2,2}}$ is bounded. 

\noindent {\it Step 4:}  By Cauchy-Schwartz,  $$\|\nabla_{A_0}a\wedge a\|_{L^{5/2}} \leq \|a\|_{L^{1,5}} \|a\|_{L^5} $$

\indent  \indent  \ \ and likewise for $\|\mu(\nabla \Phi_0, \psi) \|_{L^{5/2}},\|\mu( \Phi_0, \nabla \psi) \|_{L^{5/2}}, \|\mu(\nabla \psi, \psi) \|_{L^{5/2}}$ using the bounds on

\indent \indent \ \  $\Phi_0, \psi$ from Theorem \ref{compactness}.

\noindent {\it Step 5:} Using Lemma \ref{FA0bound} to bound the terms involving $F_{A_0}$, apply the elliptic estimate (\refeq{elliptic3}) with \indent \indent \ \  \indent \indent \ \ \  $p=5/2 $ to conclude that $\|\e^2a\|_{L^{2,5/2}}$ is bounded.

\noindent {\it Step 6:} Applying the interpolation inequality with $\delta<<1$, 

$$\|a\|_{L^{1,p}}\leq C_K \left( \|a\|_{L^{2,5/2}}^{1/2} \|a\|^{1/2}_{L^{6-\delta}} + \|a\|_{L^2}\right)$$

\indent \indent \ \ \ where $\frac{1}{p}=\frac{1}{3}+\frac{1}{2}\left(\frac{1}{5/2}- \frac{2}{3}\right) + \tfrac{1-1/2}{6-\delta}$ to conclude $\|\e a\|_{L^{1,p}}\to 0$ for $p=\tfrac{60}{17}-\delta'>3$. 

\medskip
 
\noindent It follows that $\e_i a_i\to 0$ in $L^{1,3}$ and $C^0$ by the Sobolev embedding since $K$ is compact.
\end{proof}

\bigskip

\begin{lm} \label{nonlinear4d}Let $X$ be a closed, oriented 4-manifold and suppose that $(\Phi_i, A_i, \e_i)\to (\mathcal Z_0, A_0,\Phi_0)$ is a sequence of solution to (\refeq{bSW1}--\refeq{bSW3}) converging to a $\Z_2$-harmonic spinor in the sense of Theorem \ref{compactness} 
in Case (III) or Case (IV). Let $(\ph_i, a_i)$ denote the difference from $(\Phi_0, A_0)$ as in (\refeq{deformationeq}). Then \be \|A_\e\|_{L^{1,4}(K)} \to 0   \hspace{1cm}\text{and }\hspace{1cm} \|A_\e\|_{C^{0}(K)} \to 0 \label{nonlinear4dbounds}\ee
on compact subsets $K\Subset Y-\mathcal Z_0$, where $A_\e$ is the four-dimensional analogue of the matrices (\refeq{Aepsilon1}, \refeq{Aepsilon2}) from the proof of Lemma \ref{formofQ} in Cases (III) and (IV) respectively.  . 
\end{lm}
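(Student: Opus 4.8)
The plan is to follow the proof of Lemma~\ref{nonlinear3d} verbatim, with the Sobolev exponents adjusted from dimension $3$ to dimension $4$. Write $\psi_i:=\e_i\ph_i=\Phi_i-\Phi_0$ for the rescaled spinor deformation. The matrix $A_\e$ --- the four-dimensional analogue of \eqref{Aepsilon1} in Case (III) and of \eqref{Aepsilon2} in Case (IV) --- has entries obtained from $\psi_i$, and in Case (IV) also from $\e_i a_i$, by composition with the pointwise bounded bundle maps $\gamma(\cdot)$, $\mu(\cdot,\cdot)$, $[\,\cdot\wedge\cdot\,]$ and the orthogonal projections $\Pi^{\mathrm{Re}},\Pi^{\mathrm{Im}}$. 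It therefore suffices to show $\|\psi_i\|_{L^{1,4}(K)}\to0$ and $\|\psi_i\|_{C^0(K)}\to0$ always, and in addition $\|\e_i a_i\|_{L^{1,4}(K)}\to0$ and $\|\e_i a_i\|_{C^0(K)}\to0$ in Case (IV); and since $L^{1,s}(\mathbb{R}^4)\hookrightarrow C^0$ for $s>4$, in each case it is enough to prove convergence to $0$ in $L^{1,s}$ for a single $s>4$.

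In Case (IV) the spinor terms are handled directly by the additional hypothesis $\Phi_i\to\Phi_0$ in $L^{2,p}_{\mathrm{loc}}$, $p>2$: in dimension $4$ one has $L^{2,p}\hookrightarrow L^{1,s}$ with $\tfrac1s=\tfrac1p-\tfrac14<\tfrac14$ and $L^{2,p}\hookrightarrow C^{0,2-4/p}$, so $\psi_i\to0$ in $L^{1,4}(K)$ and $C^0(K)$. The substance of the lemma is the connection term $\e_i a_i$, which I would treat by the $\e$-weighted bootstrap of Lemma~\ref{nonlinear3d}. The deformation $(\ph_i,a_i)$ satisfies a curvature (and gauge-fixing) equation of the schematic form
\[
\e_i^2\,\mathbf{d}_{A_0}^{+}a_i \;=\; \mu\big(\Phi_0+\psi_i,\Phi_0+\psi_i\big)-\e_i^2\,F_{A_0}^{+}-\e_i^2\,[a_i\wedge a_i]^{+}
\]
on $K'\Subset X-\mathcal Z_0$; here $\mu(\Phi_0,\Phi_0)=0$, so the leading term on the right is a bilinear expression in $(\Phi_0,\psi_i)$ and $(\psi_i,\psi_i)$, and $\|F_{A_0}\|_{C^2(K')}<\infty$ by Lemma~\ref{FA0bound} (which, as in its proof, also provides the elliptic estimates for $\mathbf{d}_{A_0}^{+}$, since $A_0$ restricts to a controlled connection on the smooth bundle $\Omega^{\mathrm{Re}}$). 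Multiplying the elliptic estimates for $\mathbf{d}_{A_0}^{+}$ by $\e_i^2$ and iterating --- starting from the bounds on $\psi_i$ above together with the assumed surplus integrability $a_i\in L^{1,p}$, $p>2$ (hence $a_i\in L^q$ for some $q>4$), to bound $\e_i^2\|a_i\|_{L^{1,q_1}}$ with $q_1>4$, then differentiating the equation and commuting covariant derivatives (which generates the curvature term $[F_{A_0}\wedge a_i]$, controlled again by Lemma~\ref{FA0bound}) to bound $\e_i^2\|a_i\|_{L^{2,q_2}}$ and higher-integrability variants --- reproduces Steps $0$--$5$ of the proof of Lemma~\ref{nonlinear3d} with the dimension-$4$ exponents. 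A concluding weighted interpolation inequality
\[
\|a_i\|_{L^{1,s}(K)}\;\le\; C\Big(\|a_i\|_{L^{2,q_2}(K')}^{1/2}\,\|a_i\|_{L^{r}(K')}^{1/2}+\|a_i\|_{L^2(K')}\Big),
\]
with $s>4$ and $r$ such that $a_i$ is bounded in $L^r(K')$, then closes the estimate upon multiplying by $\e_i$: the factor $\e_i\|a_i\|_{L^{2,q_2}}^{1/2}=\big(\e_i^2\|a_i\|_{L^{2,q_2}}\big)^{1/2}$ stays bounded by the bootstrap, while $\|a_i\|_{L^r(K')}\to0$ by Rellich--Kondrachov applied to the weak convergence $a_i\rightharpoonup0$ in $L^{1,2}_{\mathrm{loc}}$ and $\e_i\|a_i\|_{L^2}\to0$, so $\|\e_i a_i\|_{L^{1,s}(K)}\to0$.

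For Case (III) the matrix $A_\e$ has no $\e_i a_i$-term, so only $\psi_i\to0$ in $L^{1,4}(K)$ and $C^0(K)$ is required; the former follows from the strong $L^{2,2}_{\mathrm{loc}}$ convergence of Theorem~\ref{compactness} via $L^{2,2}(\mathbb{R}^4)\hookrightarrow L^{1,4}$, and the latter from an $\e$-weighted bootstrap that now exploits the abelian structure. Since $F_{A_0}=0$ and $[a_i\wedge a_i]=0$, the equation above collapses to $\e_i^2\mathbf{d}_{A_0}^{+}a_i=\mu(\Phi_0+\psi_i,\Phi_0+\psi_i)$, whose right-hand side is a product of $L^{2,2}$ sections and hence tends to $0$ in $L^{2,2-\delta}_{\mathrm{loc}}$; elliptic regularity gives $\e_i^2 a_i\to0$ in $L^{3,2-\delta}_{\mathrm{loc}}\hookrightarrow C^0\cap L^{1,q}$ for all $q<\infty$. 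Substituting this into the rescaled Dirac equation $\slashed D_{A_0}^{+}(\e_i^3\ph_i)=-\gamma(\e_i^2 a_i)(\Phi_0+\psi_i)$ and iterating as before --- now producing the surplus integrability internally from the $L^{2,2}$-convergence of the spinor, rather than assuming it --- yields, after the same kind of weighted interpolation with $\|\psi_i\|_{L^{2,2}}\to0$ supplying the low-regularity factor, that $\|\psi_i\|_{L^{1,s}(K)}\to0$ for some $s>4$, finishing Case (III).

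The main obstacle --- and the reason naive bootstrapping fails --- is that the Sobolev embeddings and multiplications in play are borderline in dimension $4$: $L^{2,2}(\mathbb{R}^4)\not\hookrightarrow C^0$, and $L^{1,2}(\mathbb{R}^4)$ is the critical space for $L^4$, so the quadratic curvature nonlinearity and the commutator $[a_i\wedge a_i]^{+}$ sit exactly at the edge of the multiplication theorems. Each bootstrap step therefore costs a small loss of integrability, and the delicate point is to track the powers of $\e_i$ so that the high-regularity norm enters the final interpolation to the power $\tfrac12$, exactly absorbing the $\e_i^{-2}$ accumulated from the curvature equation, after which the decay of the product comes from the low-regularity factor through compactness. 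I expect that balancing, together with checking that the initial surplus integrability is available (assumed in Case (IV), produced via the abelian structure in Case (III)), to be where the real work lies.
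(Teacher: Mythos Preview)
Your approach for Case (IV) is essentially the paper's: the paper's proof reads in full ``Case (IV) follows from a similar bootstrapping procedure as in the previous lemma, but repeatedly applying Steps 1--5 until convergence in $L^{2,4+\gamma}$ of $\e^2 a_i$ is obtained for $0<\gamma\ll1$. Then, interpolating as in Step 6 of Lemma~\ref{nonlinear3d} establishes that $\e a_i\to 0$ in $L^{1,4+\gamma}$. The second bound follows from the Sobolev embedding $L^{1,4+\gamma}\hookrightarrow C^{0,\alpha}$.'' Your outline---$\e$-weighted elliptic estimates for $\mathbf d_{A_0}^+$, differentiation and commutator control via Lemma~\ref{FA0bound}, then a Gagliardo--Nirenberg interpolation splitting off one power of $\e$---is exactly this. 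You are also right to invoke the surplus hypothesis $\Phi_i\to\Phi_0$ in $L^{2,p}_{\mathrm{loc}}$ and $A_i\to A_0$ in $L^{1,p}_{\mathrm{loc}}$ for some $p>2$; although the lemma's statement does not repeat it, it is the standing assumption for Case (IV) (cf.\ Theorem~\ref{mainb} and Proposition~\ref{exponentialdecay}), and without it the quadratic term $[a_i\wedge a_i]^+$ sits exactly at the borderline $L^{1,2}\cdot L^{1,2}\to L^{1,2}$ and the iteration never improves.

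Where you diverge from the paper is Case (III). The paper disposes of it in one line---``the conclusion again follows directly from Theorem~\ref{compactness} and the Sobolev embedding''---paralleling Case~(I). You correctly observe that this is delicate: in dimension~$4$ one has $L^{2,2}\hookrightarrow L^{1,4}$ (giving the first bound) but $L^{2,2}\not\hookrightarrow C^0$, so strong $L^{2,2}_{\mathrm{loc}}$ convergence of $\psi_i$ does not by itself yield $\|\psi_i\|_{C^0(K)}\to 0$. Your proposed remedy---exploit the abelian structure ($F_{A_0}=0$, $[a_i\wedge a_i]=0$) to push $\e_i^2 a_i$ into $L^{3,2-\delta}\hookrightarrow C^0$, feed this back into the Dirac equation, and close with a weighted interpolation---is a reasonable route and more careful than what the paper writes. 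One caution: the final interpolation you sketch, aiming for $\|\psi_i\|_{L^{1,s}}\to 0$ with $s>4$ from a high-regularity norm carrying $\e_i^{-2}$ and the low-regularity factor $\|\psi_i\|_{L^{2,2}}\to 0$, needs its exponents verified explicitly; Gagliardo--Nirenberg with $\theta=\tfrac12$ between $L^{2,q}$ and $L^{2,2}$ lands at $L^{1,s}$ with $\tfrac1s=\tfrac12(\tfrac1q-\tfrac14)+\tfrac12\cdot\tfrac14$, so you do clear $s>4$ once $q>4$, but you should state the specific $q$ your bootstrap reaches rather than leave it implicit.
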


\begin{proof} In Case (III) the conclusion again follows directly from Theorem \ref{compactness} and the Sobolev embedding. Case (IV) follows from a similar bootstrapping procedure as in the previous lemma, but repeatedly applying Steps 1--5 until convergence in $L^{2,4+\gamma}$ of $\e^2 a_i$ is obtained for $0<\gamma<<1$. Then, interpolating as in Step 6 of Lemma \ref{nonlinear3d} establishes that $\e a_i\to 0$ in $L^{1,4+\gamma}$. The second bound of (\refeq{nonlinear4dbounds}) follows from the Sobolev embedding $L^{1,4+\gamma}\hookrightarrow  C^{0,\alpha}$. 
\end{proof}

The convergence to a $\Z_2$-harmonic spinor now, by design, fits into the abstract framework of Sections \ref{section2}--\ref{section3}. The following proposition retains the setting of the previous lemmas, and additionally uses the notation of $S^\text{Im}, \Omega^\text{Im}$ from Section \ref{section4.4}. 

\begin{prop}\label{exponentialdecay}
Suppose that $(\Phi_i, A_i,\e_i)\to (\mathcal Z_0, A_0, \Phi_0)$ is a sequence of solutions to (\refeq{bSW1}--\refeq{bSW3}) converging in the sense of Theorem \ref{compactness} in Cases (I)--(III);  in Case (IV) further assume that $A_i\to A_0$ in $L^{1,p}_{loc}$ and $\Phi_i\to \Phi_0$ in $L^{2,p}_{loc}$ for $p>2$.  Let $(\ph_i, a_i)=(\ph^\text{Re}, \ph^\text{Im}, a^\text{Re}, a^\text{Im})$  \footnote{In Cases (I) and (III), $\Omega^{\text{Re}}$ is empty hence $a=a^\text{Im}$.} denote the perturbations from the limiting data which satisfy (\refeq{deformationeq}). Then there exist constants $C,c$ depending only on $\Phi_0$ and background data such that 

\be \|(\ph_i^\text{Im}, a_i^\text{Im})\|_{C^0(K)} \leq \frac{C}{R_K^{n/2}}  \text{Exp}\left( - \frac{c R_K}{\e_i}\right) \|(\ph_i,a_i)\|_{L^{1,2}(K')}\ee

\noindent on compact subsets $K\Subset K'\Subset Y-\mathcal Z_0$ (resp. $X-\mathcal Z_0$), where $n=3,4$ is the dimension in the respective cases, and $R_K=\text{dist}(K,\mathcal Z_0)$.  

\end{prop}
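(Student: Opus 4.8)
The plan is to recognize this statement as an immediate consequence of Corollary \ref{cornonlinear}, applied to the non-linear deformation equation \eqref{nonlineardeformation} for the perturbation $(\ph_i,a_i)$, once its hypotheses have been matched against the structural results of Sections \ref{section4}--\ref{section5}. Concretely, I would take $\mathfrak q=(\ph_i,a_i)$, let $E$ be the real Clifford module $S\oplus(\Omega^0\oplus\Omega^1)(\mathfrak g_P)$ in dimension $3$ (resp. the pair $S^\pm\oplus\cdots$ in dimension $4$), let $D$ and $\mathcal A=\bigl(\begin{smallmatrix}0&0\\0&A_{\mathfrak H}\end{smallmatrix}\bigr)+A_\e$ be the block-decomposed operators produced in Lemma \ref{formofQ} (formulas \eqref{Aepsilon1} and \eqref{Aepsilon2} in Cases (I)--(II), and their four-dimensional analogues in (III)--(IV)), let $Q(\mathfrak q)=Q_1(\mathfrak q)\pi_{\mathfrak H}(\mathfrak q)$ with $A_\e=\e\,Q_1(\mathfrak q)$, and let $f=-E_0$. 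So the work is essentially bookkeeping: almost all the analysis is done by Corollary \ref{cornonlinear}.

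The key steps, in order, are: (1) invoke Lemma \ref{concentrationL} for the concentration property \eqref{concentrationproperty} of $(\sigma_D,\mathcal A)$, and Lemma \ref{formofQ}(iii) for the analogous relation $Q_1^\star\sigma_D=\sigma_D^\star Q_1$; (2) invoke Proposition \ref{fixeddegeneracyL} for the fixed-degeneracy assumption \eqref{fixeddegeneracy}, i.e. that the splitting $E|_{Y-\mathcal Z_0}=\mathfrak N\oplus\mathfrak H$ is $\nabla_{A_0}$-parallel, preserved by Clifford multiplication, and block-diagonalizes $\mathcal A$; (3) use Lemma \ref{formofQ}(i)---the Haydys correspondence with stabilizers---to check that $f=-E_0=\star F_{A_0}$ (resp. $F_{A_0}^+$) lies in $\Gamma(\mathfrak N)=\Gamma(\mathfrak H^\perp)$, and Lemma \ref{formofQ}(ii) for the required form of $Q$; (4) observe that $A_{\mathfrak H}$, being assembled pointwise from $\Phi_0$ via Clifford multiplication and the moment map, is smooth with $\e$-independent derivative bounds on $K$, since $\Phi_0$ is smooth on $Y-\mathcal Z_0$ (resp. $X-\mathcal Z_0$) by elliptic regularity for $\slashed D_{\mathcal Z_0}\Phi_0=0$---here one uses that $A_0$ is flat with $\Z_2$-holonomy in Cases (I), (III), and in Cases (II), (IV) is, via the Haydys correspondence, the connection built from a smooth spin/Levi-Civita connection and the flat $\Z_2$-connection on $\ell_0$, exactly as in the proof of Lemma \ref{FA0bound}; (5) quote Lemma \ref{nonlinear3d} (for $n=3$, Cases (I)--(II)) and Lemma \ref{nonlinear4d} (for $n=4$, Cases (III)--(IV)) for the quadratic estimates \eqref{quadraticestimates}, noting that in Case (IV) this is precisely where the extra hypothesis $A_i\to A_0$ in $L^{1,p}_{loc}$, $\Phi_i\to\Phi_0$ in $L^{2,p}_{loc}$ is consumed; and finally (6) apply Corollary \ref{cornonlinear} to obtain \eqref{expdecay} for $\pi_{\mathfrak H}(\mathfrak q)$, identify $\pi_{\mathfrak H}(\mathfrak q)=(\ph_i^\text{Im},a_i^\text{Im})$ via Proposition \ref{fixeddegeneracyL} (with $a_i=a_i^\text{Im}$ in Cases (I), (III)), and absorb $\Lambda_K=\inf_{y\in K,\,v\in\mathfrak H_y}\|\mathcal A v\|/\|v\|>0$ into the constant $c$ to reach the stated form of the inequality.

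Since Corollary \ref{cornonlinear} carries all the analytic weight, the main obstacle does not lie in this proof but in arranging its hypotheses, and the one genuinely non-formal ingredient is step (5): verifying the smallness in \eqref{quadraticestimates} of the $\e$-dependent perturbation $A_\e$. That is the content of the bootstrapping in Lemmas \ref{nonlinear3d} and \ref{nonlinear4d}, where the borderline Sobolev exponent in dimension $4$ forces the extra regularity hypothesis in Case (IV). A secondary point to handle with care is step (4): although $A_0$ is a priori only $L^{1,2}$ in Cases (II) and (IV), the Haydys correspondence upgrades it to a smooth connection on the summand carrying $\Phi_0$, which is exactly what places the unperturbed operator $D_\e=D+\tfrac1\e\mathcal A$ within the scope of Section \ref{section2}.
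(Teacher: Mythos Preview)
Your proof strategy is essentially the same as the paper's and is well organized, but there is one genuine missing step: gauge-fixing. The Dirac-type operator $D+\tfrac{1}{\e}\mathcal A$ in \eqref{linearizationasCDO} is the linearized Seiberg--Witten equations \emph{supplemented} by the gauge-fixing condition \eqref{gaugefixing}; without that extra equation the system is not elliptic and does not fit the framework of Sections \ref{section2}--\ref{section3}. The perturbations $(\ph_i,a_i)$ coming from actual Seiberg--Witten solutions satisfy \eqref{SW1}--\eqref{SW2} but do not automatically satisfy \eqref{gaugefixing}. The paper handles this by invoking Proposition 6.1 of \cite{DWDeformations}: the regularity on $A_\e$ furnished by Lemmas \ref{nonlinear3d}--\ref{nonlinear4d} is precisely what is needed to find a gauge transformation placing $(\ph_i,a_i)$ in the gauge \eqref{gaugefixing}, after which the full equation \eqref{nonlinearDirac} holds and Corollary \ref{cornonlinear} applies. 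You should insert this step before your step (6).

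Apart from this, your steps (1)--(6) match the paper's argument closely; your treatment of the smoothness of $A_{\mathfrak H}$ in step (4) via the Haydys correspondence is in fact more explicit than the paper's proof, which takes this as implicit in the setup.
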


\begin{proof}

The regularity on $A_\e$ established in Lemmas \ref{nonlinear3d}--\ref{nonlinear4d} is sufficient to apply Proposition 6.1 of \cite{DWDeformations} to show that there is a gauge transformation putting the deformation $(\ph_i,a_i)$ in the gauge  (\refeq{gaugefixing}) (note that \cite{DWDeformations}, Proposition 6.1) applies identically in 4-dimensions for the equivalent Sobolev range). As in Claim \ref{FA0bound}, the limiting connection $F_{A_0}\in \Omega^\text{Re}$, so the limiting configuration solves the Seiberg-Witten equations in the $S^\text{Im},\Omega^{Im}$ components (i.e. $SW(\Phi_0, A_0)=-F_{A_0} \in \Omega^\text{Re}$ as in \refeq{deformationeq}). Thus In this gauge, Proposition \ref{concentrationL},  Lemma \ref{fixeddegeneracyL}, and Lemmas \ref{nonlinear3d}--\ref{nonlinear4d} show that the assumption of Corollary \ref{cornonlinear} hold in the respective cases.  The conclusion then follows directly. 
\end{proof}

Using the exponential convergence of Proposition \ref{exponentialdecay}, we may now conclude the proof of Theorem \ref{mainb}: 

\begin{proof}[Proof of Theorem \ref{mainb}] Let $(\Phi_0+\psi_i, a_i)=\left(\e_i \left(\frac{\Phi_0}{\e_i}+\ph_i\right), a_i\right)$ denote the re-normalized sequence. For the remainder of the proof, the subscript is kept implicit. 
Let $K$ denote a compact subset, and let $K_0 \Supset K_{1}\Supset K_2\ldots$ be a nested sequence of compact subsets of $Y-\mathcal Z_0$ so that $K\subseteq {\bigcap K_j}$. By the assumption that the sequence converges in the sense of Theorem \ref{compactness}, Proposition \refeq{exponentialdecay} applies to show

$$\|(\ph^{\text{Im}},a)\|_{C^0(K_0)}\leq C_K\text{Exp}\left(-\frac{c}{\e}\right)\|(\ph^{\text{Re}}, \ph^{\text{Im}}, a)\|_{L^{1,2}(Y-\mathcal Z_0)}\leq \frac{C}{\e}\text{Exp}\left(-\frac{c}{\e}\right).$$

\noindent It now follows from the curvature equation $$F_{A_i} + \frac{1}{2\e} \mu\left(\ph_i^\text{Im}, \Phi_0\right) + \frac{1}{2}\mu(\ph_i^\text{Re},\ph_i^\text{Im})=0$$

\noindent that $F_{A_0}=0$ (resp. $F_{A_0}^+=0$) for the limiting connection. 

Bootstrapping now proceeds in the standard way using the Seiberg-Witten equations (\refeq{SW1}--\refeq{SW2}) to show 
\bea
\|(\psi_i^{\text{Im}},a_i^\text{Im})\|_{L^{k,2}(K_k)}&\leq& \frac{C_k}{\e^{2k+1}}\text{Exp}\left(-\frac{c}{\e}\right)\to 0\\
\|(\psi_i^{\text{Re}}, a_i^\text{Re})\|_{L^{k,2}(K_k)}&\to &  0
\eea
for every $k\geq 3$ (thus in particular on $K\subseteq K_k$), and the conclusion follows.

 To spell out the first few steps for $n=3$ in Case (I), first apply the elliptic estimate for the operator $\bold d$ of (\refeq{4.7}), which in this case is independent of $A$ with $(k,p)=(1,6)$. The second equation (\refeq{SW2}) shows

\bea
\|a_i\|_{L^{1,6}(K_1)}&\leq& C_{1,6}\left(\frac{1}{\e^2}\|\mu(\Phi_0, \psi_i^\text{Im}) + \mu(\psi_i^\text{Re}, \psi_i^{\text{Im}})\|_{L^6(K_0)} + \|a_i\|_{L^6(K_0)}\right) \\
&\leq & \frac{C}{\e^{3}} \ \text{Exp}\left(-\frac{c}{\e}\right)\to 0.
\eea

\noindent \noindent This in turn implies $A_0 \in L^{1,6}$. Likewise, for the spinor, applying the elliptic estimate for $\slashed D_{\widetilde A}$ (for a fixed smooth background connection $\widetilde A$) to the first equation (\refeq{SW1}) yields
\bea
\|\psi^\text{Im}_i\|_{L^{1,6}(K_1)}&\leq& C_{1,6}\left(\| (\widetilde A-A_0)\psi_i^\text{Im}\|_{L^6(K_0)} + \| \gamma(a_i)\psi_i^\text{Re}\|_{L^6(K_0)}  + \|\psi_i^{\text{Im}}\|_{L^6(K_0)}\right) \\
&\leq & \frac{C}{\e^{3}}\text{Exp}(-\frac{c}{\e})\to 0\\
\|\psi^\text{Re}_i\|_{L^{1,6}(K_1)}&\leq& C_{1,6}\left(\| (\widetilde A-A_0)\psi_i^\text{Re}\|_{L^6(K_0)} + \| \gamma(a_i)\psi_i^\text{Im}\|_{L^6(K_0)}  + \|\psi_i^{\text{Re}}\|_{L^2(K_0)}\right)\to 0
\eea

Repeating the elliptic estimate for the connection now with $(k,p)=(2,2)$ and using the fact that multiplication induces a bounded map $L^{1,6}\times L^{1,6}\to L^{1,2}$ one has 
\bea
\|a_i\|_{L^{2,2}(K_2)}&\leq& C_{1,6}\left(\frac{1}{\e^2}\|\mu(\Phi_0, \psi_i^\text{Im}) + \mu(\psi_i^\text{Re}, \psi_i^{\text{Im}})\|_{L^{1,2}(K_1)} + \|a_i\|_{L^6(K_1)}\right) \\
&\leq & \frac{C_{2,2}}{\e^2}\left( \|\Phi_0\|_{L^{1,6}(K_1)} \|\psi_i^\text{Im}\|_{L^{1,6}(K_1)} +\|\psi_i^\text{Re}\|_{L^{1,6}(K_1)} \| \psi_i^{\text{Im}}\|_{L^{1,6}(K_1)} + \|a_i\|_{L^{1,2}(K_1)}\right) \\
&\leq & \frac{C_2}{\e^{5}}\text{Exp}\left(-\frac{c}{\e}\right)\to 0.
\eea
  
  \noindent The bootstrapping continues in this fashion to obtain the desired bounds in $L^{k,2}$ for every $k>2$. 
\end{proof}

\begin{rem} Theorem \ref{mainb} does not applying in the case of the $\text{ADHM}_{1,2}$ Seiberg-Witten equations described in Example \ref{ADHMeg}. 
As discussed in remark \ref{rem1.6} the form of the non-linear terms in this case does not satisfy the hypotheses of Corollary \ref{cornonlinear}. Specifically, in this case one need not have that $\mu(\Psi^\text{Re}, \Psi^\text{Re})=0$, thus the differential inequality \refeq{improveddifferential} contains an additional term of the form $\br q_1, f(q_0, q_0)\kt$. It seems likely to the author that in this situation, the techniques of \cite{DWExistence} (Section 5) could be extended to bootstrap convergence to $C^\infty_{loc}$ (though without the stronger exponential convergence statement of Proposition \ref{exponentialdecay}). 

In addition, for the  $\text{ADHM}_{1,2}$ Seiberg-Witten equations, the arguments of Section 3 may be extended to partially deal with the term  $\br q_1, f(q_0, q_0)\kt$  in the following way. Using Young's inequality and the fact that the Green's function (\refeq{RiemannianGreens}) is $L^2$-integrable in dimension 3, Proposition \ref{perturbationcase} can be adapted to prove that diverging spinors have the form $$\Phi_i=\Phi_0 + \ph_i $$
\noindent where $\ph_i=O(\e)$ in $C^0_{loc}$. This provides a step in confirming the asymptotic expansions postulated in \cite{DWAssociatives} (Section 5.3).  

\end{rem}

\appendix
\section{An Extension in $n=3$ Dimensions}
\label{appendixA}
This appendix provides a minor strengthening of Corollary \ref{cornonlinear} and Proposition \ref{exponentialdecay} to include the case of a family of compact sets $K_\e$ parameterized by $\e$. These results are employed in Corollary 1.3 and Appendix A of \cite{PartI} and in \cite{PartIII}.

Let $K_\e\Subset \mathcal Z_0$ be an $\e$-parameterized family of compact subsets in the complement of $\mathcal Z_0$. For a small constant $c_1$ to be chosen momentarily, denote by $R_\e=c_1\cdot \text{dist}(K_\e, \mathcal Z)$. We restrict to the case that $R_\e\to 0$, else we are in the previous situation with $K=\bigcup_\e K_\e$. Let $K_\e'$ denote a family of slightly larger compact subsets such that for some $\e_0>0$, the following conditions are met for all $\e<\e_0$:  
  
  \newpage 
\begin{enumerate}
\item[(1)] For some constant $\kappa_1<1$, one has $$\text{dist}(K_\e, Y- K_\e')\geq \kappa_1R_\e.$$
\item[(2)] If $y_0\in K_\e$ and $y\in B_{R_\e}(y_0)$ then $$ |\Lambda(y)|^2 \geq \frac{|\Lambda(y_0)|^2}{2}.$$
\item[(3)] The bounds \be \frac{\| A_\e\|_{L^{1,3}(K'_\e)}}{\inf_{K_\e} |\Lambda(y)|} \to 0 \hspace{2cm} \|A_\e\|_{C^0(K'_\e)} \leq c_2 \inf_{K_\e} |\Lambda(y)|\label{7.1}\ee are satisfied for all $\e<\e_0$ where $c_2<1/8$. 
\end{enumerate}

When assumptions (1)--(3) above are satisfied,  and $c_1$ is chosen sufficiently small, the following extension of Theorem \ref{maina} and Corollary \ref{cornonlinear} holds. 

\begin{cor}
Let $D_\e$ be a concentrating Dirac operator with fixed degeneracy, and $Q$ a non-linear operator satisfying the hypotheses of Corollary \ref{cornonlinear}. Suppose that $K_\e\Subset K_\e'\Subset Y-\mathcal Z$ are nested family of compact subsets of $Y-\mathcal Z_0$. If the family satisfies assumptions (1)--(3) above. Then the conclusion of Theorem \ref{concentrationprinciple} continues to hold, i.e. there exist constants $C, c$ independent of $\e$ and an $\e_0>0$ such that for $\e<\e_0$, any solution $$(D+\tfrac{1}{\e}\mathcal A)\frak q+Q(\frak q)=0$$ satisfies $$\|\pi_{\frak H}q\|_{C^0(K_\e)}\leq \frac{C}{|\text{dist}(K_\e, \mathcal Z)|^{3/2}}\text{Exp}\left(-\frac{\Lambda_{K_\e}c}{\e}\text{dist}(K_\e,\mathcal Z)\right)\|\frak q\|_{L^{1,2}(K'_\e)}.$$
\label{expdecaycor}
\end{cor}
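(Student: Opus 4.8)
The plan is to re-run the proofs of Theorem~\ref{concentrationprinciple} and of Proposition~\ref{perturbationcase} (the latter being the relevant template, since the non-linear equation is rewritten as a perturbed linear one with $A_\e = \e Q_1(\frak q)$ as in Section~\ref{section3}), but to replace every step of the form ``absorb for $\e$ sufficiently small, $\e$ depending on $K$'' by a quantitative estimate that is uniform over the family $\{K_\e\}$. Hypotheses (1)--(3) together with the smallness of $c_1$ are designed so that this is possible; the essential new difficulty, relative to the fixed-$K$ case, is that $\Lambda_{K_\e}$ --- the fiberwise lower bound on $A_\mathfrak H$ over $K_\e$ --- is no longer bounded away from $0$, and neither is $R_\e = c_1\,\text{dist}(K_\e,\mathcal Z)$. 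The first step is to localize: fix $y_0 \in K_\e$ and carry out the whole argument on $B = B_\rho(y_0)$ with $\rho = \tfrac{\kappa_1}{4}R_\e$. By assumption~(1), $B$ and $B_{2\rho}(y_0)$ lie in $K_\e'$, and since $c_1<1$ they are disjoint from $\mathcal Z$; by assumption~(2), $\Lambda(y)^2 \geq \tfrac12\Lambda(y_0)^2$ on $B$. Since $R_\e \to 0$ it is convenient to pass to geodesic normal coordinates on $B$ rescaled to unit size: the rescaled metric is then uniformly $C^\infty$-close to the Euclidean one, and the mass parameter $M = \Lambda(y_0)/(\sqrt2\,\e)$ rescales to $M\rho$. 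In these coordinates the Green's-function comparison of Lemma~\ref{greensfunction}(iii) --- comparing the Dirichlet Green's function of $-\Delta_g - M^2$ on $B$ with the Euclidean model $\tfrac{C(n)}{|x-x_0|^{n-2}}\text{Exp}(-M|x-x_0|)$ of \eqref{RiemannianGreens} --- holds with a constant depending only on $n$ and on the $C^2$-distance of $g$ from the flat metric, hence uniformly once $c_1$ is small; the same exponential weight $\text{Exp}(-M\rho|x-x_0|)$ appears on both sides regardless of the (possibly large) value of $M\rho$.

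Next I would establish on $B$ the differential inequality \eqref{improveddifferential}, namely $\Delta_g|q_1|^2 + \e^{-2}|A_\mathfrak H q_1|^2 \leq -2|\nabla q_1|^2 - \tfrac2\e\br q_1,(\nabla A_\e)q_1\kt$, following the opening of the proof of Proposition~\ref{perturbationcase} verbatim. The absorptions now have to be tracked with their $\Lambda$-dependence: $|A_\e q_1|^2 \leq \tfrac14|A_\mathfrak H q_1|^2$ is the $C^0$-bound in \eqref{7.1} (with $c_2<1/8$), while the curvature term $\br q_1,\mathcal R q_1\kt$ and the Weitzenb\"ock term $\tfrac1\e\br q_1,(\nabla A_\mathfrak H)q_1\kt$ are absorbed into $\e^{-2}|A_\mathfrak H q_1|^2 \geq \Lambda(y_0)^2\e^{-2}|q_1|^2$, which is legitimate once $\e$ is small relative to $\Lambda_{K_\e}^2/\|\nabla A_\mathfrak H\|_{C^0(K_\e')}$ --- a smallness that is part of the standing setup and is automatic in the $n=3$ regime $\Lambda_{K_\e}R_\e/\e \to \infty$ relevant to \cite{PartI,PartIII}. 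With \eqref{improveddifferential} in hand, I would run the Green's-identity argument on $B$ together with the dyadic weighted-interpolation estimate of Lemma~\ref{interpolationclaim}. The one point requiring care is that the Harnack constant used there stays uniform: this holds because the annuli $A_n$ in the dyadic decomposition are constructed to have width $\leq 1/M$, so $\nu R = O(1)$ in the notation of Theorem~8.20 of \cite{GilbargTrudinger} uniformly, a feature that is insensitive to $R_\e \to 0$. The final absorption of the interpolation term then reduces --- exactly as at the end of the proof of Lemma~\ref{interpolationclaim}, and using that $n=3$ makes the H\"older exponents and the weight $L^{1,3}$ non-borderline --- to a bound of the shape $(C + C\Lambda(y_0))\,\|A_\e\|_{L^{1,3}(B)} \leq 1$, i.e.\ precisely to the first bound in \eqref{7.1}.

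Assembling the pieces yields $|q_1(y_0)|^2 \leq C\rho^{-n}\,\text{Exp}\!\big(-\tfrac{c\,\Lambda(y_0)}{\e}\rho\big)\,\|q_1\|^2_{L^{1,2}(K_\e')}$ with $C,c$ depending only on $n$, the background data, and the constants in (1)--(3). Substituting $\rho = \tfrac{\kappa_1 c_1}{4}\,\text{dist}(K_\e,\mathcal Z)$, taking square roots, shrinking $c$ to replace $\Lambda(y_0)$ by $\Lambda_{K_\e}$, and taking the supremum over $y_0\in K_\e$ gives the stated estimate. I expect the main obstacle to be the bookkeeping in the middle paragraph: one must check that the two competing degenerations --- the geometry flattening out ($R_\e\to 0$, which is benign and in fact helps the Green's-function comparison) and $A_\mathfrak H$ becoming nearly non-invertible ($\Lambda_{K_\e}\to 0$, which threatens every absorption and every Harnack/interpolation constant) --- are reconciled by hypotheses (1)--(3). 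The decisive point is that \eqref{7.1} measures $\|A_\e\|$ relative to $\inf_{K_\e}|\Lambda|$ rather than using the cruder smallness $\|A_\e\|\to 0$ of Corollary~\ref{cornonlinear}: this is exactly the normalization needed for the weighted-interpolation absorption, and hence for the constants $C,c$ to be genuinely independent of $\e$.
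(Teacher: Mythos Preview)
Your approach is essentially the same as the paper's: the paper's proof is a three-sentence sketch stating that the argument is identical to Corollary~\ref{cornonlinear} except that the ball radius in Lemma~\ref{greensfunction} now depends on $\e$, that the second bound in \eqref{7.1} replaces the $C^0$ absorption \eqref{absorbC0}, and that the first bound in \eqref{7.1} is what allows the absorption of the interpolation terms in Lemma~\ref{interpolationclaim}. You have correctly identified and expanded exactly these three modifications, and your more careful bookkeeping of the competing degenerations $R_\e\to 0$ and $\Lambda_{K_\e}\to 0$ (in particular the uniformity of the Harnack constant and the role of the $\Lambda$-relative normalization in \eqref{7.1}) fills in details the paper leaves implicit.
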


\begin{proof}
The proof is identical to Corollary \ref{cornonlinear}, except that now for each point $y_0$ the radius of the ball used in Lemma \ref{greensfunction} depends on $\e$. In addition, the second bound of (\refeq{7.1}) is used to obtain (\refeq{absorbC0}) in this case, and the first is used to absorb the terms of Lemma \ref{interpolationclaim}. 
\end{proof}

To conclude, we note a particular case of interest. Suppose that in Case (I), $(\mathcal Z_0, A_0,\Phi_0)$ is a $\Z_2$-harmonic spinor which is  {\bf non-degenerate} in the sense that there exists a $c_2>0$ such that $$|\Phi_0|(x) \geq c_2\sqrt{ \text{dist}(x,\mathcal Z_0)}.$$

\noindent Then one has $\Lambda_{K_\e} \geq c_2\sqrt{R_\e}=c_2\sqrt{c_1}\cdot \text{dist}(K_\e, \mathcal Z_0)$. In this case, the Corollary \ref{expdecaycor} implies: 

\begin{cor}\label{lengthcor}
Suppose $(\mathcal Z_0, A_0,\Phi_0)$ is a non-degenerate $\Z_2$-harmonic spinor, and $K_\e= \{ y \ | \ \text{dist}(y,\mathcal Z_0)\geq c_1 \e^{2/3}\}$ is the family of compact subsets above. Then if the deformation $(\ph_\e, a_\e)$ satisfies (\refeq{fiducialbounds}), then 

 \be \|(\ph^\text{Im}, a)\|_{C^0(K_\e)} \leq \frac{C}{|\text{dist}(K_\e, \mathcal Z)|^{3/2}} \text{Exp}\left(-\frac{c_1}{\e}\text{dist}(K_\e, \mathcal Z_0)^{3/2}\right) \|(\ph,a)\|_{L^{1,2}(K_\e')}\label{charlength}\ee
 \medskip 
 \qed
 \label{CorA2}
\end{cor}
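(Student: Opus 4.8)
The plan is to obtain Corollary~\ref{lengthcor} as a direct specialization of Corollary~\ref{expdecaycor}: the only genuinely new inputs are the explicit choice of the family $K_\e$ and the insertion of the non-degeneracy hypothesis into the exponential rate. First I would set $R_\e$ to be the fixed multiple $c_1\cdot\text{dist}(K_\e,\mathcal Z_0)$ of the distance as in the appendix, and record that for $K_\e=\{y:\text{dist}(y,\mathcal Z_0)\ge c_1\e^{2/3}\}$ one has $\text{dist}(K_\e,\mathcal Z_0)=c_1\e^{2/3}\to 0$, so we are genuinely in the shrinking regime. Choosing $K_\e'$ to be the $\kappa_1 R_\e$-neighborhood of $K_\e$ for a fixed $\kappa_1<1$ makes assumption~(1) of the appendix hold by construction. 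Assumption~(2) follows from the non-degeneracy bound $|\Phi_0|(x)\ge c_2\sqrt{\text{dist}(x,\mathcal Z_0)}$ together with the fact that $\Lambda(y)$ is comparable to $|\Phi_0|(y)$ in the cases at hand (this is how $\mathcal A$ depends on $\Phi_0$; cf.~Lemma~\ref{fixeddegeneracyL}): on $B_{R_\e}(y_0)\subseteq B_{(1+c_1)\,\text{dist}(y_0,\mathcal Z_0)}(y_0)$ the distance to $\mathcal Z_0$ changes by at most a factor close to $1$ once $c_1$ is small, hence so does $\Lambda(y)^2$, giving $\Lambda(y)^2\ge\tfrac12\Lambda(y_0)^2$. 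Assumption~(3) is precisely the hypothesis \refeq{fiducialbounds} assumed in the statement.

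With assumptions~(1)--(3) verified, Corollary~\ref{expdecaycor} applies and produces constants $C,c$ and some $\e_0>0$ such that, for $\e<\e_0$,
\[
\|(\ph^\text{Im},a)\|_{C^0(K_\e)}\ \le\ \frac{C}{|\text{dist}(K_\e,\mathcal Z_0)|^{3/2}}\,\text{Exp}\!\left(-\frac{\Lambda_{K_\e}\,c}{\e}\,\text{dist}(K_\e,\mathcal Z_0)\right)\|(\ph,a)\|_{L^{1,2}(K_\e')}.
\]
It then remains only to lower-bound the rate. Non-degeneracy gives $\Lambda_{K_\e}=\inf_{y\in K_\e}\Lambda(y)\ge c_2'\sqrt{R_\e}=c_2'\sqrt{c_1}\,\text{dist}(K_\e,\mathcal Z_0)^{1/2}$, so $\Lambda_{K_\e}\cdot\text{dist}(K_\e,\mathcal Z_0)\ge c_2'\sqrt{c_1}\,\text{dist}(K_\e,\mathcal Z_0)^{3/2}$. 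Substituting this into the exponent and absorbing $c$, $c_2'$ and all background/dimensional constants yields \refeq{charlength}. For the particular family $K_\e=\{\text{dist}(\cdot,\mathcal Z_0)\ge c_1\e^{2/3}\}$ the exponent is then a fixed constant independent of $\e$ (since $\text{dist}(K_\e,\mathcal Z_0)^{3/2}/\e\asymp(\e^{2/3})^{3/2}/\e=1$), which is exactly the assertion that the $\frak H$-components are exponentially suppressed outside a neighborhood of $\mathcal Z_0$ of width comparable to $\e^{2/3}$ — the characteristic length scale referred to in the text.

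The one step carrying real content is the verification of assumption~(3): one needs $\|A_\e\|_{L^{1,3}(K_\e')}$ to decay \emph{faster} than $\inf_{K_\e}\Lambda\asymp\sqrt{R_\e}\to 0$, and $\|A_\e\|_{C^0(K_\e')}$ to be dominated by a small multiple of the same shrinking quantity, on sets $K_\e'$ that collapse onto $\mathcal Z_0$. The global estimates of Lemmas~\ref{nonlinear3d}--\ref{nonlinear4d} only control $\|A_\e\|$ on \emph{fixed} compact subsets of $Y-\mathcal Z_0$, so they do not by themselves supply the comparison against $\Lambda_{K_\e}$; this scale-sensitive information is exactly what is packaged into the hypothesis \refeq{fiducialbounds}, and for the corollary as stated the proof reduces to matching \refeq{fiducialbounds} term-by-term with assumptions~(2)--(3) of the appendix. (Deriving \refeq{fiducialbounds} from scratch would instead require redoing the bootstrap of Lemma~\ref{nonlinear3d} on annular regions $\{\text{dist}(\cdot,\mathcal Z_0)\in[R_\e,2R_\e]\}$ with the elliptic scaling tracked explicitly, which is why the present statement carries it as an assumption.)
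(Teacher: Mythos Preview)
Your proposal is correct and follows essentially the same approach as the paper: the paper states the corollary with a bare \qed, having just noted that non-degeneracy gives $\Lambda_{K_\e}\ge c_2\sqrt{R_\e}$ and that Corollary~\ref{expdecaycor} then immediately yields the conclusion. Your verification of assumptions (1)--(3) and substitution of the non-degeneracy lower bound into the exponent is exactly the intended argument, spelled out in somewhat more detail than the paper gives; the one minor imprecision is that assumption~(3) is not \emph{precisely} \refeq{fiducialbounds} but is implied by it (the paper itself calls \refeq{fiducialbounds} ``the slightly stronger assertion'' from which hypothesis~(3) follows).
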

Corollary \ref{CorA2} implies the conclusions asserted in  Corollary 1.3 and Appendix B of \cite{PartI}. For the situation described there, one has that $K_\e= \{ y \ | \ \text{dist}(y,\mathcal Z_0)\geq c_1 \e^{2/3-\gamma_1}\}$ for $\gamma<<1$, the hypotheses (1)--(2) are easily seen to be satisfied. Hypothesis (3) follows from the slightly stronger assertion that 

\be \| \nabla \ph_\e\|_{L^{3}(K_\e)} \leq \frac{C}{\e^{2/3}}  \hspace{1cm} \text{and} \hspace{1cm}\|\ph_\e\|_{C^0(K_\e)}\leq \frac{C}{ \e^{2/3}}\label{fiducialbounds}.\ee

\noindent The results of Appendix B of \cite{PartI} show that the bounds \refeq{fiducialbounds} are satisfied for the model solutions used in the gluing construction of \cite{PartI,PartIII}. Thus hypotheses (1)--(3) hold, and Corollary \ref{lengthcor} implies the assertions in Corollary 1.3 and Appendix B of \cite{PartI}.

Corollary \ref{lengthcor} also implies a characteristic length scale that is not obvious from the convergence statements of Theorem \ref{compactness}. In particular, the exponential decay result applies on families of compact subsets with $\text{dist}(K_\e,\mathcal Z_0)\sim \e^{2/3}$. For smaller compact subsets, the exponent factor approaches $1$ and the conclusion of (\refeq{charlength}) becomes trivial. This suggests that $r=O(\e^{2/3})$ is a characteristic length scale for the convergence to a $\Z_2$-harmonic spinor in the case of the two-spinor Seiberg-Witten equations. Indeed, this length scale naturally appears in the construction of the model solutions used in the gluing construction of \cite{PartI,PartIII}. It is also the same length scale the appears in the equivalent problem for Hitchin's equations  \cite{MWWW, FredricksonSLnC, HitchinAsymptoticGeometry}. A promising approach to the  surjectivity of the gluing problem is to attempt to dilate this characteristic length to be unit size to extract limiting profiles of the sequence $(\ph_\e, a_\e)$ along the singular set $\mathcal Z_0$ and show these necessarily arise from gluing data as in \cite{PartI,PartIII}.

\section{Estimates for the Green's Function}
\label{Greensappendix}

This appendix proves assertion (iii) in the proof of Lemma \ref{greensfunction}. This is a consequence of the following: 

\begin{lm} Let $(X,g)$ be a Riemannian manifold of dimension 3 or 4 with bounded geometry, and take $R_0$ less than the injectivity radius of $X$. Let $M>0$, and for a point $x_0\in X$, denote by $G(x_0, x)$ the Green's function with Dirichlet boundary condition on $B_{R_0}(x_0)$, so that 

$$\begin{cases} \left(-\Delta_g - \frac{M^2}{\e^2}\right)G(x_0, x)= \delta_{x_0} \ \hspace{1cm}x\in B_{R_0}\\
G(x_0, x) =0 \hspace{3.5cm} x\in \del B_{R_0}
\end{cases}$$
\noindent where $\Delta_g$ denotes the (positive-definite) Laplacian of the Riemannian metric $g$. Then, there exists an $\e_0>0$ depending only on the geometry of $(X,g)$ and a constant $C_n$ depending only on the dimension $n$ such that the bound 

\be |G(x_0,x)| \leq \frac{C_n}{|x-x_0|^{n-2}} \ \text{Exp}\left(-\frac{M}{2\e}|x-x_0|\right)\label{greenestappendix}\ee
\noindent holds for $\e<\e_0$. 
\end{lm}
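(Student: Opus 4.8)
The plan is to compare the Dirichlet Green's function $G(x_0,x)$ on the geodesic ball $B_{R_0}(x_0)$ with an explicit supersolution built from the Euclidean Green's function \eqref{RiemannianGreens}, using the maximum principle. First I would pass to geodesic normal coordinates centered at $x_0$; since $(X,g)$ has bounded geometry and $R_0$ is below the injectivity radius, the metric coefficients and their derivatives are controlled, so $\Delta_g$ differs from the flat Laplacian $\Delta_0$ by terms of size $O(r)$ in the second-order coefficients and $O(1)$ in the first-order coefficients, uniformly. Set $r = |x - x_0|$ (the geodesic distance, comparable to the coordinate distance up to a fixed constant). The candidate barrier is
\[
\Psi(x) = \frac{C_n}{r^{n-2}}\,\mathrm{Exp}\!\left(-\frac{M}{2\e}\,r\right),
\]
which is, up to the dimensional constant, the Euclidean Green's function of $-\Delta_0 - \tfrac{M^2}{4\e^2}$ (note the factor $1/4$, which gives the decay rate $M/(2\e)$ claimed in \eqref{greenestappendix}).

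The key computation is to apply $-\Delta_g - \tfrac{M^2}{\e^2}$ to $\Psi$ and check it is $\geq 0$ on the punctured ball, i.e. that $\Psi$ is a supersolution. On Euclidean space one has the exact identity $(-\Delta_0 - \tfrac{M^2}{4\e^2})\Psi = 0$ away from $x_0$ (this is the standard fact that $e^{-\kappa r}/r^{n-2}$ — resp. with the appropriate radial profile in dimension $n$ — solves $(-\Delta_0 + \kappa^2)u = 0$ with $\kappa = M/(2\e)$; in dimension $3$ it is exactly $e^{-\kappa r}/r$, in dimension $4$ one uses $r^{-1}K_1(\kappa r)$ but the pointwise bound $r^{2-n}e^{-\kappa r}$ still holds). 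Therefore
\[
\left(-\Delta_g - \frac{M^2}{\e^2}\right)\Psi
= \left(-\Delta_g + \Delta_0\right)\Psi + \left(-\Delta_0 - \frac{M^2}{4\e^2}\right)\Psi - \frac{3M^2}{4\e^2}\Psi
= \left(-\Delta_g + \Delta_0\right)\Psi - \frac{3M^2}{4\e^2}\Psi.
\]
The error term $(-\Delta_g + \Delta_0)\Psi$ involves at most two derivatives of $\Psi$; differentiating the radial profile brings down factors of $M/\e$ and $1/r$, and the bounded-geometry coefficient bounds contribute the $O(r)$ and $O(1)$ factors, so one gets a pointwise bound of the schematic form $\left|(-\Delta_g+\Delta_0)\Psi\right| \leq C\big(\tfrac{M}{\e} + \tfrac1r + \tfrac{M^2 r}{\e^2}\big)\cdot\tfrac{1}{r^{n-2}}e^{-Mr/2\e} \cdot (\text{geometry})$. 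The point is that every such term is dominated by $\tfrac{3M^2}{4\e^2}\Psi$ once $\e$ is small enough relative to the geometry, \emph{provided} one also absorbs the $1/r$ and $M/\e$ contributions — here one uses that $r \leq R_0$ and $R_0$ is bounded, and crucially that the "bad" negative term $-\tfrac{3M^2}{4\e^2}\Psi$ has the largest power of $1/\e$. Choosing $\e_0$ small (depending only on the injectivity radius and the $C^2$-bounds on $g$) makes $(-\Delta_g - \tfrac{M^2}{\e^2})\Psi \geq 0$ throughout $B_{R_0}\setminus\{x_0\}$.

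With the supersolution in hand, the comparison is routine: near $x_0$, $\Psi \sim C_n r^{2-n}$ blows up like the fundamental solution while $G$ has the same singularity with the standard normalizing constant, so choosing $C_n$ (the dimensional constant) appropriately ensures $\Psi \geq G$ on a small sphere $\{r = \delta\}$ for all small $\delta$; on $\partial B_{R_0}$ one has $G = 0 \leq \Psi$; and $(-\Delta_g - \tfrac{M^2}{\e^2})(\Psi - G) \geq 0$ on the annular region $\{\delta < r < R_0\}$. Since $-\Delta_g - \tfrac{M^2}{\e^2}$ satisfies the maximum principle on $B_{R_0}$ for $\e < \e_0$ (the zeroth-order coefficient $-M^2/\e^2$ has the favorable sign once we have arranged the first eigenvalue of $-\Delta_g$ on $B_{R_0}$ to exceed $M^2/\e^2$ — which, since $R_0$ is fixed, again holds for $\e$ small, or more simply because our barrier argument is already a maximum-principle argument that does not require this), letting $\delta \to 0$ gives $G \leq \Psi$ on all of $B_{R_0}$. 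Combined with the lower bound $G \geq 0$ from the maximum principle (already invoked in the proof of Lemma \ref{greensfunction}), this yields \eqref{greenestappendix}. The main obstacle is purely bookkeeping: verifying that the geometric error term $(-\Delta_g + \Delta_0)\Psi$ is genuinely absorbable — one must be careful that the term with a bare $1/r$ (coming from first-order coefficients of $\Delta_g$ acting on the $r^{2-n}$ profile, which produces $r^{1-n}$) is still beaten by $\tfrac{M^2}{\e^2} r^{2-n}$, i.e. that $1/r \lesssim M^2/\e^2$; this fails for $r$ very small, so one instead absorbs that particular term into the $-\Delta_0$ part of the radial profile rather than into the $M^2/\e^2$ term, exactly as in the classical construction of parametrices for $-\Delta + \kappa^2$. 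Handling the dimension $4$ case requires knowing the asymptotics of the modified Bessel function $K_1$, but only the elementary bounds $K_1(t) \lesssim t^{-1}$ for $t \lesssim 1$ and $K_1(t) \lesssim t^{-1/2}e^{-t}$ for $t \gtrsim 1$ are needed, which still give the stated $r^{-(n-2)}e^{-Mr/2\e}$ envelope.
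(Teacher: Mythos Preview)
Your approach is essentially identical to the paper's: compare $G$ with the Euclidean Green's function $G_0^{M/2}$ at half the mass, show the geometry error $(\Delta_0-\Delta_g)G_0^{M/2}$ is absorbed by the spare $\tfrac{3M^2}{4\e^2}G_0^{M/2}$, and conclude by the maximum principle. The paper sets $\ph=G-G_0^{M/2}$ and checks directly that $(\Delta_g+\tfrac{M^2}{\e^2})\ph\leq 0$ with $\ph\leq 0$ on $\partial B_{R_0}$.

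The one place your argument is looser than it needs to be is the metric expansion. In geodesic normal coordinates $g_{ij}=\delta_{ij}+O(r^2)$, so the second-order coefficients of $\Delta_g-\Delta_0$ are $O(r^2)$ and the first-order coefficients are $O(r)$, not the $O(r)$ and $O(1)$ you wrote. With the sharper bounds the ``main obstacle'' you flag evaporates: for a radial function $f$ one has $(\Delta_0-\Delta_g)f=O(r^2)\,\partial_r^2 f+O(r)\,\partial_r f$, and plugging in $f=G_0^{M/2}$ gives terms all bounded by $C\big(\tfrac{1}{r}+\tfrac{M}{\e}+\tfrac{M^2 r}{\e^2}\big)e^{-Mr/2\e}$ \emph{after} the extra factor of $r$, hence dominated by $\tfrac{3M^2}{4\e^2}G_0^{M/2}$ uniformly on $0<r<R_0$ once $\e$ is small. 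No separate treatment near $r=0$ or appeal to parametrix constructions is needed; the paper's computation is exactly this. Your remark on Bessel functions in dimension $4$ is a valid refinement (the paper simply asserts the general case is analogous), but for the barrier argument one only needs a supersolution with the right leading singularity, not the exact Euclidean Green's function.
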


\begin{proof}This follows from a comparison principle argument using the Green's function on Euclidean space. We prove the lemma in dimension $n=3$; the general case is the same using the appropriate power of $|x-x_0|$. 

Let $\Delta_0$ denote the Laplacian on $\R^3$ with the Euclidean metric $g_0$, and $G_0^{M/2}$ denote the Green's function $$\left(-\Delta_g - \frac{M^2}{4\e^2}\right)G_0^{M/2}=\delta_0$$
on all of $\R^3$. It is a standard fact that $$G_0^{M/2}= \frac{C_3}{|x-x_0|} \text{Exp}\left(-\frac{M}{2\e}|x-x_0|\right).$$

Using geodesic normal coordinates on $B_{R_0}(x_0)$, define $\ph= G-G_0^{M/2}$ where $G=G(x_0,x)$. Then 
\bea
(-\Delta_g -\tfrac{M^2}{\e^2})\ph &=& \delta_{x_0}+ (\Delta_0 -\Delta_g)G_0^{M/2} -(\Delta_0 + \tfrac{M^2}{4\e^2})G_0^{M/2} - \tfrac{3M^2}{4\e^2} G_0^{M/2}\\
&=& 0 +(\Delta_0 - \Delta_g)G_0^{M/2} -\tfrac{3M^2}{4\e^2}G_0^{M/2}.
\eea
\noindent Since $g=g_0 + O(r^2)$ in geodesic normal coordinates, for a radially symmetric function $f(r)$ one has 
$$(\Delta_0 - \Delta_g)f(r)=O(r^2)\left[\del_r^2 + \frac{1}{r}\del_r\right] f(r)+O(r)\del_r f(r)=O(r^2)\del_r^2 + O(r)\del_r$$
\noindent where $O(r^k)$ denotes a quantity bounded by $Cr^k$ for a constant $C$ depending only on the geometry of $(X,g)$. Differentiating,  
\bea
\del_r G_0^{M/2}&=& C \left(-\frac{1}{r^2}- \frac{M}{2\e r}\right) e^{-\frac{M}{2\e}r}\\
\del_r^2 G_0^{M/2}&=& C \left(\frac{2}{r^3}+ \frac{M}{2\e}+ \frac{M^2}{4\e^2 r}\right) e^{-\frac{M}{2\e}r}
\eea
\noindent hence $$|(\Delta_0-\Delta_g) G_0^{M/2}|\leq C\left(\frac{1}{r^3}+ \frac{M}{2\e}+ \frac{M^2}{4\e^2 r}\right) e^{-\frac{M}{2\e}r}\leq \frac{3M^2}{4\e^2} G_0^{M/2}
 $$
 \noindent once $\e$ is sufficiently small depending only on $R_0$ and the geometry of $(X,g)$. It follows that $\ph$ satisfies 
 \bea
 (\Delta_g + \tfrac{M^2}{\e^2})\ph &\leq& 0 \hspace{1cm}\text{on}\hspace{1cm}B_{R_0}(x_0) \\ 
 \ph &\leq& 0 \hspace{1cm}\text{on}\hspace{1cm}\del B_{R_0}(x_0)
 \eea
 where the last line follows since $G=0$ on $\del B_{R_0}$ by definition, and $-G_0^{M/2}<0$ everywhere. The maximum principle implies that $\ph \leq 0$ on $B_{R_0}$ once $\e$ is sufficiently small, which yields the desired estimate (\refeq{greenestappendix}).
\end{proof}


{\small

\medskip

\bibliographystyle{amsplain}
{\small 
\bibliography{Bibliography_Collected_ConcentratingDirac}}

\providecommand{\bysame}{\leavevmode\hbox to3em{\hrulefill}\thinspace}
\providecommand{\MR}{\relax\ifhmode\unskip\space\fi MR }
\providecommand{\MRhref}[2]{%
  \href{http://www.ams.org/mathscinet-getitem?mr=#1}{#2}
}
\providecommand{\href}[2]{#2}
\begin{thebibliography}{10}

\bibitem{Doanthesis}
Aleksander Doan, \emph{{\emph{\it{Monopoles and Fueter Sections on
  Three-Manifolds}}}}, Ph.D. thesis, State University of New York at Stony
  Brook, 2019.

\bibitem{DWAssociatives}
Aleksander Doan and Thomas Walpuski, \emph{{\emph{\it {On Counting Associative
  Submanifolds and {Seiberg-Witten} Monopoles}}}}, Pure and Applied Mathematics
  Quarterly \textbf{15} (2019), no.~4, 1047--1133.

\bibitem{DWDeformations}
\bysame, \emph{{\emph{\it {Deformation Theory of the Blown-up {Seiberg--Witten}
  Equation in Dimension Three}}}}, Selecta Mathematica \textbf{26} (2020),
  no.~3, 1--48.

\bibitem{DWExistence}
\bysame, \emph{{\emph{\it {On the Existence of Harmonic
  {$\mathbb{Z}_2$}-Spinors}}}}, Journal of Differential Geometry \textbf{117}
  (2021), no.~3, 395--449.

\bibitem{DonaldsonSegal}
Simon Donaldson and Ed~Segal, \emph{{\emph{\it{Gauge Theory in Higher
  Dimensions, II}}}}, Surveys in differential geometry Volume XVI. Geometry of
  special holonomy and related topics. \textbf{16} (2011), 1--41.

\bibitem{LocalizingIndexI}
J.L. Fast and S.~Ochanine, \emph{{\emph{\it{On the $KO$-characteristic Cycle of
  a $\text{Spin}^c$-manifold}}}}, Manuscripta Math \textbf{115} (2004), no.~1,
  73--83.

\bibitem{FredricksonSLnC}
Laura Fredrickson, \emph{{\emph{\it {Generic Ends of the Moduli Space of
  $SL(n,\mathbb C)$ Higgs Bundles}}}}, arXiv Preprint (2018), arXiv 1810.01556.

\bibitem{HitchinAsymptoticGeometry}
Laura Fredrickson, Rafe Mazzeo, Jan Swoboda, and Hartmut Weiss,
  \emph{{\emph{\it{Asymptotic Geometry of the Moduli Space of Parabolic
  $\text{SL}(2,\mathbb C)$-Higgs Bundles}}}}, Journal of the London
  Mathematical Society-Second Series (2022).

\bibitem{FurutaTorusI}
Hajime Fujita, Mikio Furuta, and Takahiko Yoshida, \emph{{\emph{\it{Torus
  fibrations and localization of index I---polarization and acyclic
  fibrations}}}}, J. Math. Sci. Univ. Tokyo \textbf{17} (2010), no.~1, 1--26.

\bibitem{FurutaTorusIII}
\bysame, \emph{{\emph{\it{Torus fibrations and localization of index III:
  equivariant version and its applications}}}}, Communications in Mathematical
  Physics \textbf{327} (2014), 665--689.

\bibitem{FuturaTorusII}
\bysame, \emph{{\emph{\it{Torus fibrations and localization of index II: local
  index for acyclic compatible system}}}}, Communications in Mathematical
  Physics \textbf{326} (2014), 585--633.

\bibitem{GilbargTrudinger}
David Gilbarg and Neil~S Trudinger, \emph{Elliptic partial differential
  equations of second order}, vol. 224, Springer, 1977.

\bibitem{LocalizingIndexII}
S.~Hayashi, \emph{{\emph{\it {Localization {Dirac} Operators on
  $4n+2$-dimensional Open $\text{Spin}^c$-manifolds}}}}, arXiv Preprint (2013),
  arXiv 1306.0389.

\bibitem{HaydysCorrespondence}
Andriy Haydys, \emph{{\emph{\it {Gauge theory, Calibrated Geometry and Harmonic
  Spinors}}}}, Journal of the London Mathematical Society \textbf{86} (2012),
  no.~2, 482--498.

\bibitem{HaydysG2SW}
\bysame, \emph{{\emph{\it{G2 Instantons and the {Seiberg-Witten} Monopoles}}}},
   (2017), arXiv: 1703.06329.

\bibitem{HWCompactness}
Andriy Haydys and Thomas Walpuski, \emph{{\emph{\it {A Compactness Theorem for
  the {Seiberg--Witten} Equation with Multiple Spinors in Dimension Three}}}},
  Geometric and Functional Analysis \textbf{25} (2015), no.~6, 1799--1821.

\bibitem{KM}
Peter~B Kronheimer and Tomasz Mrowka, \emph{{\emph{\it {Monopoles and
  Three-Manifolds}}}}, vol.~10, Cambridge University Press Cambridge, 2007.

\bibitem{ManosConcentrationI}
Manousos Maridakis, \emph{{\emph{\it {Spinor pairs and the concentration
  principle for {Dirac} operators}}}}, Transactions of the American
  Mathematical Society \textbf{369} (2017), no.~3, 2231--2254.

\bibitem{ManosConcentrationII}
\bysame, \emph{{\emph{\it {A Localization Theorem for {Dirac} operators}}}},
  arXiv Preprint (2021), arXiv 2110.11654.

\bibitem{MWWW}
Rafe Mazzeo, Jan Swoboda, Hartmut Weiss, and Frederik Witt, \emph{{\emph{\it
  {Ends of the Moduli space of {Higgs} Bundles}}}}, Duke Mathematical Journal
  \textbf{165} (2016), no.~12, 2227--2271.

\bibitem{MazzeoWittenNahmI}
Rafe Mazzeo and Edward Witten, \emph{\emph{{\it The Nahm Pole Boundary
  Condition }}}, The influence of Solomon Lefschetz in geometry and topology.
  Contemporary Mathematics (2013), no.~621, 171--226.

\bibitem{MazzeoWittenNahmII}
\bysame, \emph{\emph{{\it The KW Equations and the Nahm Pole Boundary Condition
  with Knots}}}, Communications in Analysis and Geometry \textbf{28} (2020),
  no.~4, 871--942.

\bibitem{JinLocalizingIndex}
Jin Miyazawa, \emph{{\emph{\it {Localization of a $KO^*(\text{pt})$-valued
  Index and the orientability of the $Pin^-(2)$-monopole moduli space}}}},
  arXiv Preprint (2021), arXiv 2109.10579.

\bibitem{MorganSW}
John~W Morgan, \emph{\emph{{\it The {Seiberg-Witten} Equations and Applications
  to the Topology of Smooth Four-manifolds}}}, vol.~44, Princeton University
  Press, 1996.

\bibitem{PartI}
Gregory~J. Parker, \emph{{\emph{\it {Concentrating Local Solutions of the
  Two-Spinor Seiberg-Witten Equations on 3-Manifolds}}}}, arXiv Preprint
  (2022), arXiv:2210.08148.

\bibitem{PartII}
\bysame, \emph{{\emph{\it {Deformations of $\mathbb Z_2$-Harmonic Spinors on
  3-Manifolds}}}}, arXiv Preprint (2022), arXiv:2301.06245.

\bibitem{PartIII}
\bysame, \emph{{\emph{\it {Gluing {$\mathbb{Z}_2$}-Harmonic Spinors on
  3-Manifolds}}}}, Forthcoming manuscript.

\bibitem{RussianConcentratingIndex}
Igor Prokhorenkov and Ken Richardson, \emph{Perturbations of {Dirac}
  operators}, Journal of Geometry and Physics \textbf{57} (2006), no.~1,
  297--321.

\bibitem{TT2}
Yuuji Tanaka and Richard Thomas, \emph{{\emph{{\it Vafa-Witten Invariant for
  Projective Surfaces II: Semi-stable Case}}}}, Pure and Applied Math.
  Quarterly \textbf{13} (2017), no.~3, 517--562.

\bibitem{TT1}
\bysame, \emph{\emph{{\it Vafa-Witten Invariant for Projective Surfaces I:
  Stable Case}}}, J. Algebraic Geom. \textbf{29} (2019), 603--668.

\bibitem{TaubesSW=Gr}
Clifford Taubes and Richard Wentworth, \emph{{\emph{\it {{{Seiberg--Witten} and
  {Gromov} invariants for symplectic 4-manifolds}}}}}, vol.~2, International
  Press Somerville, MA, 2000.

\bibitem{SW=Gr2}
Clifford~Henry Taubes, \emph{{\emph{\it {{Counting pseudo-holomorphic
  submanifolds in dimension 4}}}}}, Journal of Differential Geometry
  \textbf{44} (1996), no.~4, 818--893.

\bibitem{SW=Gr1}
\bysame, \emph{{\emph{\it {{{SW}$\Rightarrow${Gr}: from the {Seiberg-Witten}
  equations to pseudo-holomorphic curves}}}}}, Journal of the American
  Mathematical Society \textbf{9} (1996), no.~3, 845--918.

\bibitem{SW=Gr4}
\bysame, \emph{{\emph{\it {{{Gr}$=${SW}: Counting curves and connections}}}}},
  Journal of Differential Geometry \textbf{51} (1999), no.~3, 453--609.

\bibitem{SW=Gr3}
\bysame, \emph{{\emph{\it {{{Gr}$\Rightarrow${SW}: from pseudo-holomorphic
  curves to Seiberg-Witten solutions}}}}}, Journal of Differential Geometry
  \textbf{51} (1999), no.~2, 203--334.

\bibitem{TaubesWeinstein}
\bysame, \emph{{\emph{\it {{The {Seiberg--Witten} equations and the {Weinstein}
  {Conjecture}}}}}}, Geometry \& Topology \textbf{11} (2007), no.~4,
  2117--2202.

\bibitem{Taubes4dSL2C}
\bysame, \emph{{\emph{\it {Compactness Theorems for {$SL(2,{\mathbb C})$}
  Generalizations of the 4-dimensional Anti-Self Dual Equations}}}}, arXiv
  Preprint (2013), arXiv:1307.6447.

\bibitem{Taubes3dSL2C}
\bysame, \emph{{\emph{\it {$\text{PSL}(2; \mathbb{C})$ Connections on
  3-manifolds with {$L^2$} Bounds on Curvature}}}}, Cambridge Journal of
  Mathematics \textbf{1} (2013), 239--397.

\bibitem{TaubesZeroLoci}
\bysame, \emph{{\emph{\it {The Zero Loci of {$\mathbb{Z}/2$}-Harmonic Spinors
  in Dimension 2, 3 and 4}}}}, arXiv Preprint (2014), arXiv:1407.6206.

\bibitem{TaubesU1SW}
\bysame, \emph{{\emph{\it {On the Behavior of Sequences of Solutions to
  {$U(1)$}-{Seiberg-Witten} Systems in Dimension 4}}}}, arXiv Preprint (2016),
  arXiv:1610.07163.

\bibitem{TaubesVW}
\bysame, \emph{{\emph{\it {The Behavior of Sequences of Solutions to the
  {Vafa-Witten} Equations}}}}, arXiv Preprint (2017), arXiv:1702.04610.

\bibitem{TaubesKWNahmPole}
\bysame, \emph{{\emph{\it{Sequences of {Nahm pole} Solutions to the {$SU(2)$}
  {Kapustin-Witten} Equations}}}}, arXiv Preprint (2018), arXiv:1805.02773.

\bibitem{TaubesVWCounterexamples}
\bysame, forthcoming manuscript.

\bibitem{TianAnalytic}
Youliang Tian and Weiping Zhang, \emph{{\emph{\it {An analytic proof of the
  geometric quantization conjecture of Guillemin--Sternberg}}}}, Inventiones
  mathematicae \textbf{132} (1998), no.~2, 229.

\bibitem{WalpuskiNotes}
Thomas Walpuski, \emph{{\emph{\it {{Lectures on generalised Seiberg--Witten
  equations}}}}}, 2022.

\bibitem{WalpuskiZhangCompactness}
Thomas Walpuski and Boyu Zhang, \emph{{\emph{\it {On the Compactness Problem
  for a Family of Generalized Seiberg-Witten Equations in Dimension 3}}}}, Duke
  Mathematical Journal \textbf{170(17)} (2021), 239--397.

\bibitem{WittenMorseTheory}
Edward Witten, \emph{{\emph{\it {Supersymmetry and Morse theory}}}}, Journal of
  differential geometry \textbf{17} (1982), no.~4, 661--692.

\bibitem{WittenFivebranesKnots}
\bysame, \emph{\emph{{\it Fivebranes and Knots}}}, Quantum Topology \textbf{3}
  (2011), no.~1, 1--137.

\bibitem{WittenKhovanovGaugeTheory}
\bysame, \emph{\emph{{\it Khovanov Homology and Gauge Theory}}}, Proceedings of
  the Freedman Fest \textbf{18} (2012), 291--308.

\bibitem{ZhangRectifiability}
Boyu Zhang, \emph{{\emph{\it {Rectifiability and {Minkowski} Bounds for the
  Zero Loci of {$\mathbb{Z}_2$}-Harmonic Spinors in Dimension 4}}}}, arXiv
  Preprint (to appear in Comm. Analysis and Geom.) (2017), arXiv 2202.12282.

\bibitem{ZhangChernWeilWitten}
Weiping Zhang, \emph{{\emph{\it {Lectures on Chern-Weil Theory and Witten
  Deformations}}}}, Nankai Tracts in Mathematics: Volume 4, (2001).

\end{thebibliography}
\end{document}



\end{document}